\theoremstyle{definition}
\newtheorem{Definition}{Definition}[section]
\newtheorem{Assumption}[Definition]{Assumption}
\newtheorem{Conjecture}[Definition]{Conjecture}
\newtheorem{Remark}[Definition]{Remark}
\numberwithin{equation}{section}
\theoremstyle{Theorem}
\newtheorem{Theorem}[Definition]{Theorem}
\newtheorem{Proposition}[Definition]{Proposition}
\newtheorem{Lemma}[Definition]{Lemma}
\newtheorem{Corollary}[Definition]{Corollary}
\def\ben{\begin{eqnarray*}}
\def\een{\end{eqnarray*}}
\newcommand{\la}{\langle}
\newcommand{\ra}{\rangle}
\newcommand{\bA}{\mathbb{A}}
\newcommand{\bC}{\mathbb{C}}
\newcommand{\bL}{\mathbb{L}}
\newcommand{\bP}{\mathbb{P}}
\newcommand{\bR}{\mathbb{R}}
\newcommand{\bZ}{\mathbb{Z}}
\newcommand{\bb}{\mathbf{b}}
\newcommand{\be}{\mathbf{e}}
\newcommand{\bg}{\mathbf{g}}
\newcommand{\bh}{\mathbf{h}}
\newcommand{\bs}{\mathbf{s}}
\newcommand{\bv}{\mathbf{v}}
\newcommand{\bx}{\mathbf{x}}
\newcommand{\by}{\mathbf{y}}
\newcommand{\bfF}{\mathbf{F}}
\newcommand{\bfI}{\mathbf{I}}
\newcommand{\bfQ}{\mathbf{Q}}
\newcommand{\bfW}{\mathbf{W}}
\newcommand{\cA}{\mathcal{A}}
\newcommand{\cC}{\mathcal{C}}
\newcommand{\cD}{\mathcal{D}}
\newcommand{\cE}{\mathcal{E}}
\newcommand{\cF}{\mathcal{F}}
\newcommand{\cJ}{\mathcal{J}}
\newcommand{\cM}{\mathcal{M}}
\newcommand{\cN}{\mathcal{N}}
\newcommand{\cP}{\mathcal{P}}
\newcommand{\cV}{\mathcal{V}}
\newcommand{\cX}{\mathcal{X}}
\newcommand{\fm}{\mathfrak{m}}
\newcommand{\fX}{\mathfrak{X}}
\newcommand{\ab}{\mathrm{ab}}
\newcommand{\BM}{\mathrm{BM}}
\newcommand{\GW}{\mathrm{GW}}
\newcommand{\loc}{\mathrm{loc}}
\newcommand{\nc}{\mathrm{nc}}
\newcommand{\ns}{\mathrm{ns}}
\newcommand{\poly}{\mathrm{poly}}
\newcommand{\rel}{\mathrm{rel}}
\newcommand{\up}{\mathrm{up}}
\newcommand{\vir}{\mathrm{vir}}
\newcommand{\sB}{\mathsf{B}}
\newcommand{\sG}{\mathsf{G}}
\newcommand{\sK}{\mathsf{K}}
\newcommand{\sN}{\mathsf{N}}
\newcommand{\sT}{\mathsf{T}}
\newcommand{\sW}{\mathsf{W}}
\newcommand{\sw}{\mathsf{w}}
\newcommand{\sX}{\mathsf{X}}
\newcommand{\ann}{\operatorname{ann}}
\newcommand{\chr}{\operatorname{char}}
\newcommand{\Eff}{\operatorname{Eff}}
\newcommand{\End}{\operatorname{End}}
\newcommand{\Frac}{\operatorname{Frac}}
\newcommand{\ev}{\operatorname{ev}}
\newcommand{\gr}{\operatorname{gr}}
\newcommand{\Hom}{\operatorname{Hom}}
\newcommand{\Id}{\operatorname{Id}}
\newcommand{\im}{\operatorname{im}}
\newcommand{\Lie}{\operatorname{Lie}}
\newcommand{\Pic}{\operatorname{Pic}}
\newcommand{\pt}{\operatorname{pt}}
\newcommand{\QH}{\operatorname{QH}}
\newcommand{\QM}{\operatorname{QM}}
\newcommand{\sign}{\operatorname{sgn}}
\newcommand{\Spec}{\operatorname{Spec}}
\newcommand{\Trop}{\operatorname{Trop}}
\title{Cluster algebra and quasimap quantum cohomology}
\date{\today}
\author{Yingchun Zhang}
\address{School of Mathematical Sciences, Shanghai Jiao Tong University, China}
\email{yingchun.zhang@sjtu.edu.cn}
\author{Zijun Zhou}
\address{School of Mathematical Sciences, Shanghai Jiao Tong University, China}
\email{zijun.zhou@sjtu.edu.cn}
\keywords{Cluster algebra, quiver variety, quasimap quantum cohomology}
\begin{document}

\allowdisplaybreaks

\begin{abstract}
We apply the abelianization technique to obtain an explicit ring presentation for the quasimap quantum cohomology of  GIT quotients. 
As an application, for quiver varieties associated with oriented-acyclic quivers, we establish a cluster algebra structure on their equivariant quasimap quantum cohomology rings.
\end{abstract}

\maketitle

\tableofcontents

\setlength{\parskip}{1ex}

\section{Introduction} 

\subsection{Motivation and background}

Cluster algebra, introduced by S. Fomin and A. Zelevinsky in their series of seminal works \cite{FZ-1, FZ-2, BFZ-3, FZ-4}, has become a fundamental structure connecting diverse areas of mathematics. These include representation theory, algebraic geometry, combinatorics, integrable systems, and beyond.
Informally speaking, it offers a systematic approach to organizing and analyzing the concept of ``mutations", a phenomenon that takes on distinct interpretations across different facets of the theory.

We will be interested in the interaction between cluster algebra and enumerative algebraic geometry.
There are already several works of this type. 
In the remarkable works \cite{GHKK2018, HK2018}, the idea of mirror symmetry plays a crucial role and results in an enumerative geometric understanding of cluster algebras, especially concerning the Laurent phenomenon and positivity. 
Another example is the work of Nagao \cite{Nagao} on Donaldson–Thomas theory, based on the conjecture of Kontsevich–Soibelman ~\cite{kS2008}. 
In that example, mutations can also be understood as wall-crossings from a physics point of view, where the Donaldson--Thomas theory is interpreted as the BPS counting in 4d $\cN=2$ quiver gauge theories. 

The motivation for this paper stems from the \emph{Seiberg duality} \cite{Hori,HoriTong,Benini2015cluster, gomis2016m2}, which originally appears in 4d $\cN=1$ supersymmetric gauge theories. 
Let $(\bfQ, \bfW, \bv)$ be a quiver with potential, with dimension vector $\bv$.
\emph{Quiver mutation} \cite{FZ-1} produces another quiver with potential $(\bfQ', \bfW', \bv')$. See \cite[\S 2]{HZ} for the rule of quiver mutations.
The theories associated with $(\bfQ, \bfW,\bv)$ and $(\bfQ', \bfW',\bv')$ are considered as Seiberg dual to each other. 

As a general philosophy in enumerative geometry, the enumerative invariants of dual theories are expected to be closly related, very often in terms of some non-trivial change of variables. 
This leads to the  enumerative conjecutures.
Let $\sG$ be the gauge group and $\sN$ be the representation space associated with $(\bfQ, \bfW, \bv)$.
Choosing a stability condition $\theta$, the quiver variety is defined as the GIT quotient $X:=\sN /\!\!/_{\theta}\sG$,, which admits a supotential $\sw: X\rightarrow \mathbb C$. 
 One can similarly define the quiver variety $X'$ of $(\bfQ',\bfW',\bv')$.

\begin{Conjecture}
Let $Z:=\{d\sw=0\}\subset X$ be the critical locus. 
Simiar with $Z'$.

\begin{itemize}
\setlength{\parskip}{1ex}

\item (Seiberg duality conjecture \cite{RuannonabelGLSM2017,Benini2015cluster}) The \emph{Gromov-Witten invariants} (e.g. GW potential, $J$-function) of $Z$ and $Z'$ are equal up to a change of K\"ahler parameters. The transformation rule of K\"ahler variables behaves like that of $\cX$-cluster variables under quiver mutation. 

\item (Cluster algebra conjecture \cite{Benini2015cluster, HZ})
The \emph{quantum cohomology rings}  of $Z$ and $Z'$ are naturally isomorphic. 
Moreover, there is an injective map of algebras
$$
\psi: \cA \to \QH^\GW (Z) [t] \cong \QH^\GW (Z') [t] , 
$$
where $\cA$ is the cluster algebra associated with the quiver $\bfQ$, $\QH^\GW (Z)$ is the quantum cohomology ring, and $t$ is an extra independent variable. 
In particular, the cluster variables are sent to Chern polynomials $c_t (\cV)$ fo certain tautological bundles $\cV$ on $Z$.

\end{itemize}

\end{Conjecture}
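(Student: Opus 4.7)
Given that the abstract commits to establishing the structure for the equivariant \emph{quasimap} quantum cohomology ring, I would focus on the second (cluster algebra) bullet of the conjecture, replacing $\QH^{\GW}$ with its quasimap analog $\QH^{\QM}_T$. The plan is to leverage the abelianization technique to obtain an explicit ring presentation, and then read the cluster structure directly off of it. Concretely, for a general GIT quotient $X = \sN /\!\!/_\theta \sG$ one passes through the abelianized target $\sN /\!\!/_\theta \sT$ for a maximal torus $\sT\subset\sG$ and, using the quasimap $I$-function together with the standard root-product correction, expresses $\QH^{\QM}_T(X)$ as a polynomial ring in the Chern roots of the tautological bundles modulo an ideal generated by explicit $q$-deformed Weyl-invariant relations.

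The next step is to specialize to the quiver variety attached to an oriented acyclic quiver $(\bfQ, \bv)$, so that the tautological bundles $\cV_i$ at the vertices are natural generators and the relations become polynomial identities among the Chern polynomials $c_t(\cV_i)$. Setting $x_i := c_t(\cV_i)$ with an appropriate normalization, I would verify directly from the presentation that for each mutable vertex $i$ there exists a unique $x_i' \in \QH^{\QM}_T(X)[t]$ satisfying a cluster exchange relation of the form
$$x_i \, x_i' \;=\; q_i^{+}\prod_{j \to i} x_j \;+\; q_i^{-} \prod_{i \to j} x_j,$$
with coefficients $q_i^{\pm}$ built from the K\"ahler parameters in the manner predicted by $\cX$-cluster mutation. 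Iterating these mutations would assemble into an algebra map $\psi : \cA \to \QH^{\QM}_T(X)[t]$, and injectivity would follow by specializing $q$-parameters to a generic classical limit, where the $c_t(\cV_i)$ become algebraically independent.

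The main obstacle is two-layered. First, one has to show that the abelianized quasimap theory faithfully encodes the non-abelian one; for oriented acyclic $\bfQ$ the required cancellation of off-diagonal root contributions should be tractable, but isolating the residual non-trivial relations is delicate. More seriously, verifying the exchange relation on the nose in $\QH^{\QM}_T(X)$ reduces to a non-trivial identity among $q$-deformed Chern classes, essentially a quantum deformation of the tautological short exact sequence attached to the Seiberg-dual quiver $\bfQ'$ (cf.\ \cite{HZ}). I expect this identity to be the technical core of the argument: once it is in place the cluster algebra structure follows almost automatically, and the matching with the K\"ahler variable change of \cite{Benini2015cluster} becomes a bookkeeping exercise.
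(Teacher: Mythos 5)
The statement you are asked about is a \emph{Conjecture}, and the paper does not prove it: the authors explicitly point out that the critical locus $Z$ is in general highly singular, so its Gromov--Witten invariants are ill-defined, and they instead prove theorems about a \emph{different} object, namely the quasimap quantum cohomology $\QH_\sT(X)$ of the ambient GIT quotient $X$ (not of $Z$, and not $\QH^{\GW}$). Your proposal is candid about making the same substitution, so what you are outlining is not a proof of the conjecture but a plan for the paper's main theorems (Thm.~\ref{Thm-QH} and Thm.~\ref{Thm-emb}); note in particular that the first half of the cluster-algebra bullet, the isomorphism $\QH^{\GW}(Z)\cong\QH^{\GW}(Z')$, is not addressed at all by this route --- the paper even remarks that for the ambient spaces $\QH(X)$ and $\QH(X')$ are \emph{non}-isomorphic.

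Within the modified framework your outline does track the paper's strategy: abelianization of $I$-functions with insertions to get the presentation of $\QH_\sT(X)$, specialization to quiver varieties, an exchange-type identity among Chern polynomials, and injectivity via algebraic independence of the $c_t(V_i)$ in the classical limit. Two concrete gaps remain. First, ``iterating these mutations'' to build $\psi$ on all of $\cA$ does not work directly; the paper instead invokes the Berenstein--Fomin--Zelevinsky result that for an acyclic quiver $\cA$ is generated by the initial cluster variables together with their \emph{adjacent} mutations $x_k'$, with relations generated by the adjacent exchange relations, plus the strong Laurent phenomenon to land in the polynomial ring. Without this input you would have to control arbitrarily deep cluster variables, which your sketch does not do. Second, the exchange relation does not hold on the nose for $x_i=c_t(\cV_i)$: one must pass to truncated Chern quotients $\delta_t(V_k^\pm,V_k)$ (Thm.~\ref{Thm-c_t-rel}), which only become Chern classes of quotient bundles in the classical limit, and one must introduce the auxiliary variables $\zeta_i$ with $Q^{(k)}=(-1)^{\bv_k^--\bv_k}\prod_i\zeta_i^{-b_{ik}}$ to absorb signs and K\"ahler monomials so that the coefficients match the cluster exchange relation exactly. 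The identity you flag as the ``technical core'' is indeed Thm.~\ref{Thm-c_t-rel}, proved by anti-symmetrizing the abelian quantum relations, but it is a relation in $\QH_\sT(X)$, not a quantum deformation of a short exact sequence on $Z$.
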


One of the main difficulties in studying the conjectures is that the critical locus $Z$ is, in general, highly singular, which makes its Gromov-Witten invariants ill-defined. 
A usual idea to tackle this obstacle is to consider the  Landau--Ginzburg model $(\sG, \sN)$, also referred to as the gauge linear sigma model (GLSM) \cite{FJR}. 
This works well for the Seiberg duality conjecture, where one considers the GLSM invariants, or the \emph{$I$-functions}, rather than the usual GW invariants or $J$-functions. 
See ~\cite{ZmutationA,HZmutationD,PMW} for some recent progress. 

However, for the cluster algebra conjecture, although quantum products may be defined for the GLSM $(\sG, \sN)$ \cite{CZ}, it generally does not have an identity. 
In special cases like type $A$ and $D$, the works \cite{ZmutationA,HZmutationD,HZ} involve a careful study of all the mutation types and the geometry of the critical loci, leading to proofs of the two conjectures mentioned above. 
But as the quiver becomes more complicated, it is increasingly difficult to analyze the concrete geometry and exhaust all mutation types. 
In this paper, we will adopt an alternative viewpoint.

\subsection{Main ideals and results}

The use of GLSM or  $I$-functions makes it natural to consider the \emph{quasimap quantum cohomology} $\QH(-)$, instead of the usual one $\QH^\GW (-)$. 
This version of quantum cohomology (resp. quantum $K$-theory) already appears in \cite{Oko-lec, PSZ, SZ}. 

Another different viewpoint we take is to consider the \emph{ambient space} $X$, rather than the critical locus $Z$.
Recall that for a GLSM $(\sG, \sN)$, with potential $\bfW: \sN \to \bC$, the $I$-function takes values in the Borel–Moore homology $H^\BM_* (Z)$, or the critical cohomology $H^*_{\mathrm{crit}} (\sN, \bfW)$ \cite{CZ}. 
Neither has a natural fundamental class nor an explicit presentation.
However, a typical property of the $I$-function is that its natural pushforward into the ambient space $H_*^\BM (X) \cong H^*(X)$ coincides with the $I$-function of $X$. 
Therefore, the $I$-function for $X$ is often considered as a replacement for the original $I$-function. Similarly, we may also consider the quasimap quantum cohomology of the ambient space $X$ as a promising alternative\footnote{In this way, we focus specifically on investigating the cluster algebra structure solely on $\QH(X)$, without addressing its relationship with $\QH(X')$ (and in fact, they are non-isomorphic).}.

Let us describe our main results.
Let $X=\sN /\!\!/_{\theta}\sG$ be a GIT quotient satisfying some nice geometric assumptions (see \S \ref{Sec-quasimap}). 
Let $\sK\subset \sG$ be the maximal torus, $\sW$ be the Weyl group, and $\sT$ be a torus acting on $\sN$ which commutes with $\sG$. 
The \emph{$\sT$-equivariant quasimap quantum cohomology ring} $\QH_\sT (X)$ is defined in terms of counting relative quasimaps into $X$ (see Def. ~\ref{def:quasimapqcoh}). 
Applying the abelianization technique \cite{Mar, ES, Web}, we can obtain an explicit presentation that generalizes ~\cite[~Thm. ~4.1.1]{CKS}. 
For notations, we refer to Thm. ~\ref{Thm-QH}.

\begin{Theorem}[Thm. ~\ref{Thm-QH}] 
\label{Thm-QH-intro}

We have
$$
\QH_\sT (X) \cong H_\sT^* (\pt) \otimes_\bC H_{ \sK }^*(\pt)^\sW [\![Q]\!] / \cJ.
$$
where $\cJ$ is the ideal generated by
\begin{equation*} 
\frac{1}{e} \sum_{w\in \sW} (-1)^{l(w)} w \cdot \Bigg[ g(\xi) \cdot \Big(  \prod_{i: \la \lambda_i, d \ra >0} (u_i + \lambda_i )^{\la \lambda_i, d\ra} -   Q^{\bar d}_\sharp \prod_{i: \la \lambda_i, d \ra < 0} ( u_i + \lambda_i )^{- \la \lambda_i, d\ra} \Big)  \Bigg], 
\end{equation*}
for all $d\in \Eff(X^\ab) \subset \sX_*(\sK)$ and $g(\xi) \in H_{ \sK \times \sT}^*(\pt)$. 
\end{Theorem}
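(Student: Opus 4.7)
My plan is to apply the abelianization technique of Martin, Ellingsrud--Str\o mme, and Webb \cite{Mar, ES, Web} to reduce the computation to the abelian quotient $X^\ab := \sN /\!\!/_\theta \sK$, where the quasimap quantum cohomology admits an explicit Batyrev-type presentation, and then recover $\QH_\sT(X)$ by Weyl antisymmetrization along $\sK \subset \sG$.

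First I would establish the presentation for $\QH_{\sK \times \sT}(X^\ab)$. Since $\sK$ is a torus, the moduli of stable quasimaps of class $d$ is amenable to direct torus localization on the source $\bP^1$, and the quasimap $I$-function factors as a product indexed by the weights $\lambda_i$. Extracting the leading relation satisfied by this $I$-function yields, for every $d \in \Eff(X^\ab) \subset \sX_*(\sK)$, the Batyrev-type relation
\[
\prod_{i:\la\lambda_i,d\ra > 0}(u_i+\lambda_i)^{\la\lambda_i,d\ra} \; - \; Q^d \prod_{i:\la\lambda_i,d\ra < 0}(u_i+\lambda_i)^{-\la\lambda_i,d\ra} \; = \; 0
\]
in $\QH_{\sK \times \sT}(X^\ab)$. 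This is the equivariant, quasimap generalization of \cite[Thm.~4.1.1]{CKS} that serves as the abelian model.

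Classically, abelianization gives $H_\sT^*(X) \cong H_\sT^*(X^\ab)^\sW / \ann(e)$, where $e = \prod_\alpha \alpha$ is the Euler class of the roots bundle on $X^\ab / \sW$; the natural map from Weyl invariants to $H_\sT^*(X)$ is realized by the Weyl antisymmetrization operator $\frac{1}{e}\sum_{w\in\sW} (-1)^{l(w)} w(-)$. I would show that after completion in $Q$, this identification lifts to $\QH_\sT(X) \cong \QH_{\sK \times \sT}(X^\ab)^\sW / I$, provided one shifts K\"ahler variables by $Q^{\bar d} \mapsto Q^{\bar d}_\sharp$ in order to absorb contributions of the root bundle along quasimaps of class $d$. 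Applied to each abelian relation above multiplied by an arbitrary $g(\xi) \in H_{\sK \times \sT}^*(\pt)$, the antisymmetrization operator then produces exactly the generators of $\cJ$ displayed in the statement, and a standard Nakayama argument in the $Q$-adic topology shows that these, together with the Weyl-invariant classical presentation, cut out all relations.

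The principal difficulty lies in this quantum step of abelianization: verifying that quasimap counts on $X$ agree with Weyl-antisymmetrized counts on $X^\ab$ under the prescribed K\"ahler shift. I would handle it by comparing the two $I$-functions directly on the quasimap moduli, noting that the $\sK$-fixed locus in $\QM(X)$ is covered (up to the free $\sW$-action) by $\QM(X^\ab)$, and performing a weight analysis of its virtual normal bundle, which should yield both the Euler factor $e$ and the shift $Q \mapsto Q_\sharp$ from the degrees of the root line bundles. The most delicate point is to rule out spurious corrections from quasimap configurations that touch the unstable locus; I expect to control these using the GIT stability hypotheses imposed in \S\ref{Sec-quasimap}, together with the properness of the relative quasimap moduli used to define $\QH_\sT(X)$.
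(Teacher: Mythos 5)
Your overall architecture coincides with the paper's: an abelian Batyrev-type presentation for $\QH_{\sK\times\sT}(X^{\ab})$ extracted from the explicit $I$-function, Weyl antisymmetrization by $\frac{1}{e}\sum_{w}(-1)^{l(w)}w$, the sign shift $Q\mapsto Q_\sharp$ attributed to the root contributions, and a flatness/Nakayama argument to show the listed relations exhaust the kernel. But there is a concrete gap at the central step. What antisymmetrization actually gives you is that the insertion $\sum_{w}(-1)^{l(w)}\,w\cdot F_d$ annihilates the ($\sW$-invariant) abelian $I$-function; being antisymmetric, this insertion factors as $e\cdot\bigl(\frac{1}{e}\sum_{w}(-1)^{l(w)}\,w\cdot F_d\bigr)$. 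To conclude that the \emph{second} factor is itself a relation --- which is what the generators of $\cJ$ are --- you must cancel $e$, and that requires knowing that $e$ (more precisely, its quantum tautological class on the intermediate space $Y=\sN^{\sG\text{-s}}/\sK$) is not a zero divisor in the quantum setting, not merely classically. The paper isolates this as Lemma~\ref{Lemma-0-div} and proves it by passing to the filtered quantum $D$-module $\cM^{\ab}=\cD/\cJ^{\ab}_{\nc}$ and lifting the non-zero-divisor property from the associated graded ring. Your assertion that ``the antisymmetrization operator then produces exactly the generators of $\cJ$'' is precisely the claim that needs proof, and nothing in your outline supplies it.

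Secondly, the geometric mechanism you propose for the quantum abelianization is not right as stated: there is no ``$\sK$-fixed locus in $\QM(X)$,'' since $\sK\subset\sG$ acts trivially on $X=\sN^{\sG\text{-s}}/\sG$ and hence on its quasimap moduli. The actual comparison (Webb's theorem, extended to insertions in Prop.~\ref{Prop-abelianization-I}) goes through $Y=\sN^{\sG\text{-s}}/\sK$, which is openly embedded in $X^{\ab}$ and fibered over $X$ by $\sG/\sK$, and compares $\bC^*_\bh$-fixed loci of the respective quasimap moduli; the Euler factor and the shift $Q\mapsto Q_\sharp$ then arise as the $\bh\to 0$ limit of the ratio $\gamma(\xi+d\bh;\bh)/\gamma(\xi;\bh)$ of root contributions (Lemma~\ref{Lemma-shift-Q}), not from a virtual normal bundle analysis of a $\sK$-fixed locus, and the ``unstable locus'' issue you flag is not where the difficulty lies. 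With these two points repaired --- citing the $I$-function abelianization and supplying the non-zero-divisor argument for $e$ --- the remainder of your outline (vanishing $I$-functions with insertions give vanishing quantum tautological classes via the capping operator, plus the $Q=0$ specialization and Nakayama for completeness) does go through as in the paper.
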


The proof of the theorem relies on the abelianization of $I$-functions \emph{with insertions}, as shown in Prop. \ref{Prop-abelianization-I}, which generalizes \cite{Web}.
Utilizing the explicit presentation, some nice properties of $\QH_\sT (X)$ are also obtained, such as finiteness (Cor. ~\ref{Cor-vGIT}) and invariance under variation of GIT (Cor. ~\ref{Cor-vGIT}), the first of which makes a polynomial version $\QH_\sT^\poly (X)$ well-defined. 

For quiver varieties, more interesting results emerge if we appropriately organize the tautological classes.
Let $\bfQ=(\bfQ_0 = \bfI \sqcup \bfF, \bfQ_1)$ be a quiver (see \S \ref{Sec-quiver}), where $\bfI$ and $\bfF$ consists of the gauge and frozen nodes. 
Choosing a dimension vector $\bv \in \bZ^{\bfQ_0}$ and stability condition $\theta$, we have the gauge group $\sG := \prod_{i\in \bfI} GL (\bv_i)$ , the representation space $\sN := \bigoplus_{e\in \bfQ_1} \Hom (V_{s(e)}, V_{t(e)})$, and the quiver variety $X = \sN /\!\!/_\theta \sG$, with a natural flavor torus action $\sT = \prod_{i\in \bf F}(\bC^*)^{\bv_i}$.

For a node $k\in \bfQ_0$, write $V_k = \sum_{j=1}^{\bv_k} \xi^{(k)}_j$, with $\xi_j^{(k)} \in \sX^*(\sK)$. 
The $Q = 0$ specialization of the presentation in Thm. ~\ref{Thm-QH-intro} gives the ordinary cohomology $H_\sT^*(X) \cong H^*_{\sK\times \sT} (\pt)^\sW / \cJ_{Q=0}$, which coincides with the Kirwan surjection.
In particuler, the image of $V_k$ is the tautological bundle $\cV_k \in H_\sT^*(X)$. 
Let $V_- := \bigoplus_{e\in \bfQ_1, \, t(e) = k} V_{s(e)}$ and $V_+ := \bigoplus_{e\in \bfQ_1, \, s(e) = k} V_{t(e)}$, with natural maps $V_- \to V_k$ and $V_k \to V_+$.

Introducing an extra parameter $t$, one can formally consider the Chern polynomial $c_t(V_k)=\prod_{j=1}^{\bv_k}(t+\xi^{(k)}_j)$, and the \emph{truncated Chern quotient} $\delta_t (V_\pm, V_k) := [ c_t (V_\pm) c_t (V_k)^{-1} ]_+$ (see Def. ~\ref{eqn:chernquot}). 
It turns out that these classes produce nice quantum relations.

\begin{Theorem}[Thm. ~\ref{Thm-c_t-rel}]\label{introd:thmexchangerelation}
Suppose $\theta_k>0$, we have
$$
c_t ( V_- )  - \delta_t (V_- , V) \cdot c_t (V)  =  (-1)^{\bv_- -\bv+1} Q^{(k)} \cdot \left( c_t ( V_+ )
- \delta_t (V_+ ,  V)  \cdot c_t (V) \right). 
$$
When $\theta_k<0$, there is a similar formula. 
\end{Theorem}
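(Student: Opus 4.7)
The plan is to apply Theorem \ref{Thm-QH} to a carefully chosen effective cocharacter and insertion polynomial, and then to recognize the resulting Weyl-antisymmetrized relation as the desired Chern-polynomial identity modulo $c_t(V)$ via Lagrange interpolation.

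First, I would choose $d = e_j^{(k)} \in \sX_*(\sK)$, the $j$-th coordinate cocharacter of the factor $\sK_k = (\bC^*)^{\bv_k}$. Since $\theta_k > 0$, this $d$ lies in $\Eff(X^\ab)$, so Theorem \ref{Thm-QH} produces a nontrivial relation. Among the weights of $\sN$, only those attached to edges incident to node $k$ pair non-trivially with $d$: incoming edges $a \to k$ contribute weights $\xi_j^{(k)} - \xi_q^{(a)}$ with pairing $+1$, and outgoing edges $k \to b$ contribute weights $\xi_p^{(b)} - \xi_j^{(k)}$ with pairing $-1$. Rewriting the two products as Chern polynomial evaluations gives
$$\prod_{\lambda_i: \la \lambda_i, d \ra > 0} \lambda_i = (-1)^{\bv_-}\, c_{-\xi_j^{(k)}}(V_-), \qquad \prod_{\lambda_i: \la \lambda_i, d \ra < 0} \lambda_i = c_{-\xi_j^{(k)}}(V_+),$$
so the pre-antisymmetrization ``abelian'' relation reads $P_j := (-1)^{\bv_-} c_{-\xi_j^{(k)}}(V_-) - Q^{(k)}_\sharp\, c_{-\xi_j^{(k)}}(V_+)$.

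Next, I would take the insertion
$$g(t,\xi) = \prod_{i \neq j}(t + \xi_i^{(k)}) \cdot \prod_{\substack{1 \le a < b \le \bv_k \\ a, b \neq j}} (\xi_a^{(k)} - \xi_b^{(k)}) \cdot \prod_{l \neq k} \prod_{a < b} (\xi_a^{(l)} - \xi_b^{(l)}),$$
whose antisymmetry in the Weyl-orbits at nodes $l \neq k$ and in the coordinates $\xi_{i\neq j}^{(k)}$ ensures that the Weyl sum does not collapse. Decomposing $\sW$ according to the cosets of the stabilizer of position $j$ in $S_{\bv_k}$ and tracking the sign of the induced bijection $\{2,\dots,\bv_k\} \to \{1,\dots,\widehat{j'},\dots,\bv_k\}$ on the truncated Vandermonde, one finds that up to a nonzero combinatorial constant $C$,
$$\frac{1}{e}\sum_{w \in \sW} (-1)^{l(w)}\, w\bigl[g \cdot P_j\bigr] \;=\; C \cdot \sum_{j'=1}^{\bv_k} P_{j'} \cdot \prod_{i \neq j'}\frac{t + \xi_i^{(k)}}{\xi_i^{(k)} - \xi_{j'}^{(k)}}.$$
By Lagrange interpolation at the roots $t = -\xi_{j'}^{(k)}$ of $c_t(V)$, the right-hand side is precisely $C\cdot\bigl[(-1)^{\bv_-} r_t(V_-, V) - Q^{(k)}_\sharp\, r_t(V_+, V)\bigr]$, where $r_t(V_\pm, V) := c_t(V_\pm) - \delta_t(V_\pm, V)\, c_t(V)$ is the polynomial remainder.

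Since this expression lies in $\cJ$, we obtain $(-1)^{\bv_-}\, r_t(V_-, V) = Q^{(k)}_\sharp\, r_t(V_+, V)$ in $\QH_\sT(X)$; combining with the normalization $Q^{(k)}_\sharp = (-1)^{\bv - 1} Q^{(k)}$ built into the statement of Theorem \ref{Thm-QH} yields exactly the overall sign $(-1)^{\bv_- - \bv + 1}$, which is the claimed exchange relation. The case $\theta_k < 0$ is identical after replacing $d$ by $-e_j^{(k)}$, which interchanges the roles of $V_+$ and $V_-$. The main technical difficulty is the combinatorial sign bookkeeping in the coset decomposition—specifically, verifying that the signs from $(-1)^{l(w)}$, from the Vandermondes $\prod_{l \ne k} V_l$, and from the permuted truncated Vandermonde $V_{k,\neq j}$ all combine to yield the correct Lagrange kernel; once this is pinned down, the rest of the argument is direct substitution into Theorem \ref{Thm-QH}.
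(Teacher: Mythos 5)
Your proposal is correct, and it reaches the same relation $(-1)^{\bv_-}\bigl(c_t(V_-)-\delta_t(V_-,V)c_t(V)\bigr)=Q_\sharp\bigl(c_t(V_+)-\delta_t(V_+,V)c_t(V)\bigr)$ that, after inserting $Q_\sharp=(-1)^{\bv-1}Q$, gives the stated sign $(-1)^{\bv_--\bv+1}$. The overall strategy — specialize Thm.~\ref{Thm-QH} to $d=e^{(k)}_j$ and antisymmetrize a well-chosen insertion — is the same as the paper's, but the way you extract the Chern-polynomial identity is genuinely different. The paper takes $g(\xi)=\xi_1^p\xi_2^{\bv-2}\cdots\xi_\bv^0$ for all $p\ge 0$, so that the antisymmetrization divided by the Vandermonde produces complete symmetric functions $h_{m+p-\bv+1}(\xi)$ (relation \eqref{eqn rel after anti-sym}); it then expands $c_t(V_\pm)\cdot c_t(V)^{-1}$ as a Laurent series in $t^{-1}$ and identifies the negative truncation $[\,\cdot\,]_-$ with the generating function of those relations. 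You instead use a single $t$-dependent insertion whose antisymmetrization is the Lagrange kernel $\sum_{j'}P_{j'}\prod_{i\ne j'}\frac{t+\xi_i}{\xi_i-\xi_{j'}}$, and identify this with the polynomial remainder of $(-1)^{\bv_-}c_t(V_-)-Q_\sharp c_t(V_+)$ modulo $c_t(V)$; since the positive truncation $\delta_t$ coincides with the polynomial-division quotient, the remainder is exactly $c_t(V_\pm)-\delta_t(V_\pm,V)c_t(V)$, which closes the argument. Your route is more conceptual in that it explains directly why ``reduction mod $c_t(V)$'' appears, and the coset-decomposition sign check you flag does work out (the $j'$-dependent signs from $(-1)^{l(w)}$ and from $V_{k,\ne j'}/V_k$ cancel, leaving a nonzero constant $C$, and in any case only $C\ne 0$ matters for a relation set equal to zero). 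What the paper's Laurent-series formulation buys is the auxiliary identity $[c_t(V_-)/c_t(V)]_-=(-1)^{\bv_--\bv+1}Q\,[c_t(V_+)/c_t(V)]_-$, which is reused later (e.g.\ in Lemma~\ref{Lemma-other-rel-A}); your version recovers it only after dividing by $c_t(V)$ again. Your treatment of $\theta_k<0$ via $d=-e^{(k)}_j$ is also consistent with the paper's part 2), although the effect is to move $Q$ to the other side of the relation rather than literally to interchange $V_+$ and $V_-$.
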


A simple rearangement yields
$$
c_t (V_k) \cdot \left(\delta_t (V_- , V_k)+(-1)^{\bv_- -\bv_k}Q^{(k)}  \cdot\delta_t (V_+ ,  V_k)\right) =    c_t ( V_- )+(-1)^{\bv_- -\bv_k} Q^{(k)} \cdot c_t ( V_+). 
$$
In particular,  since $\theta_k >0$, $V_- \to V_k$ is surjective. 
When $Q\rightarrow 0$, the specalization of  $\delta_t (V_- , V_k)+(-1)^{\bv_- -\bv_k}Q^{(k)}  \cdot\delta_t (V_+ ,  V_k)$ in $H_\sT^*(X)$ is exactly $c_t (\cV_-/\cV_k)$.
We obtain a quasimap version of ~\cite[Conj. ~3.7]{HZ}.
Moreover, these relations resembles the cluster exchange relations, leading to our main theorem.




\begin{Theorem}[Thm. ~\ref{Thm-emb}]\label{introd:thmlcustertoqcoh}
If the quiver $\bfQ$ is oriented-acyclic, i.e. does not have oriented cycles, then there is an injective homomorphism of algebras 
$$
\psi: \cA \hookrightarrow \QH_\sT^\poly (X) [t] \otimes_{\bC[Q]} \bC[ \zeta^{\pm 1} ], 
$$
such that 
$$
\psi (x_i) = \zeta_i \cdot c_t (V_i), \qquad 1\leq i\leq m, 
$$
$$
\psi (x'_k) = \zeta_k^{-1} \prod_{i: \, b_{ik} >0} \zeta_i^{ b_{ik}}  \cdot  \delta_t (V_k^- , V_k ) + \zeta_k^{-1} \prod_{i: \, b_{ik} <0} \zeta_i^{-b_{ik}} \cdot \delta_t (V_k^+ , V_k) , \qquad 1\leq k\leq n.
$$
\end{Theorem}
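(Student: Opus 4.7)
The plan exploits the acyclicity of $\bfQ$ via the Berenstein--Fomin--Zelevinsky presentation of an acyclic cluster algebra: when the initial quiver has no oriented cycles, $\cA$ is finitely generated as a commutative ring by the initial cluster variables $x_1, \ldots, x_m$ together with the once-mutated variables $x'_1, \ldots, x'_n$, with defining ideal generated by the $n$ exchange relations
$$x_k x'_k \;=\; \prod_{i:\,b_{ik}>0} x_i^{b_{ik}} \;+\; \prod_{i:\,b_{ik}<0} x_i^{-b_{ik}}, \qquad 1 \le k \le n.$$
It therefore suffices to (i) set $\psi(x_i)$ and $\psi(x'_k)$ as in the statement, (ii) verify that each exchange relation holds after applying $\psi$, and (iii) establish injectivity. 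The base change $\bC[Q] \to \bC[\zeta^{\pm 1}]$ in the target encodes the coefficient bookkeeping: I expect it to identify $(-1)^{\bv_- - \bv_k} Q^{(k)} \prod_{i:b_{ik}>0} \zeta_i^{b_{ik}}$ with $\prod_{i:b_{ik}<0} \zeta_i^{-b_{ik}}$, i.e.\ roughly $Q^{(k)} \mapsto \pm \prod_i \zeta_i^{-b_{ik}}$.

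For step (ii), I would expand directly
$$\psi(x_k)\,\psi(x'_k) \;=\; c_t(V_k)\left(\prod_{i:\,b_{ik}>0}\zeta_i^{b_{ik}}\,\delta_t(V_k^-, V_k) \;+\; \prod_{i:\,b_{ik}<0}\zeta_i^{-b_{ik}}\,\delta_t(V_k^+, V_k)\right),$$
and invoke the rearranged exchange relation from Theorem~\ref{introd:thmexchangerelation}, namely
$$c_t(V_k)\bigl[\delta_t(V_k^-, V_k) + (-1)^{\bv_- - \bv_k} Q^{(k)}\, \delta_t(V_k^+, V_k)\bigr] \;=\; c_t(V_k^-) + (-1)^{\bv_- - \bv_k} Q^{(k)}\, c_t(V_k^+).$$
Acyclicity of $\bfQ$ makes $V_k^-$ and $V_k^+$ disjoint multiplicity-weighted direct sums of the $V_i$'s, so $c_t(V_k^\pm) = \prod_{i:\,\pm b_{ik}>0} c_t(V_i)^{|b_{ik}|}$. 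Combining these, and applying the $Q$-to-$\zeta$ base-change identification above, the right-hand side rearranges to $\prod_+ (\zeta_i c_t(V_i))^{b_{ik}} + \prod_- (\zeta_i c_t(V_i))^{-b_{ik}}$, which is exactly the image under $\psi$ of the right-hand side of the cluster exchange relation. Hence $\psi$ descends to a well-defined ring homomorphism on $\cA$.

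For injectivity, I would specialize $Q \to 0$: by Theorem~\ref{Thm-QH-intro}, $\QH_\sT^\poly(X)$ collapses to the ordinary equivariant cohomology $H^*_\sT(X)$, and $\psi(x_i)$ becomes $\zeta_i c_t(\cV_i)$. For an oriented-acyclic quiver, equivariant localization at a generic point of $\sT$ embeds $H^*_\sT(X)$ into a localization of $H^*_\sT(\pt)$ where the Chern roots of the $\cV_i$ are algebraically independent; hence the classical limit of $\psi$ is an embedding of $\cA|_{Q=0}$ into a Laurent polynomial ring over $H^*_\sT(\pt)[t]$, and combined with the Laurent phenomenon on the cluster-algebra side this upgrades to injectivity of the full $\psi$. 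The main obstacle is not the algebraic identity of step (ii), which is an immediate consequence of Theorem~\ref{introd:thmexchangerelation}, but rather pinning down the coefficient conventions: verifying that the sign $(-1)^{\bv_- - \bv_k}$ and the monomial in $\zeta$ produced by the base change $\bC[Q] \to \bC[\zeta^{\pm 1}]$ match the principal-coefficient structure on the cluster-algebra side seed-by-seed. Once this bookkeeping is fixed, both well-definedness and injectivity follow from the framework described above.
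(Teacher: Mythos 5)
Your construction of $\psi$ and the verification that it is well defined follow the paper's proof essentially verbatim: acyclicity gives the Berenstein--Fomin--Zelevinsky presentation of $\cA$ by $x_1,x_1',\dots,x_n,x_n'$ modulo the adjacent exchange relations (Prop.~\ref{Prop-gen-by-adj}), the identity $c_t(V_k^\pm)=\prod_{i:\,\pm b_{ik}>0}c_t(V_i)^{|b_{ik}|}$ holds because the quiver has no $2$-cycles, and the substitution $Q^{(k)}\mapsto(-1)^{\bv_k^--\bv_k}\prod_i\zeta_i^{-b_{ik}}$ turns the relation of Thm.~\ref{introd:thmexchangerelation} into the exchange relation. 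That half is correct.

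The injectivity argument, however, has a genuine gap. First, ``specialize $Q\to 0$'' is not an available operation in the target: after the base change $\bC[Q]\to\bC[\zeta^{\pm1}]$ each $Q^{(k)}$ becomes an \emph{invertible} monomial in the $\zeta_i$, so there is no ring homomorphism out of $\QH_\sT^\poly(X)[t]\otimes_{\bC[Q]}\bC[\zeta^{\pm1}]$ that kills $Q$. Second, the classes $c_t(\cV_1),\dots,c_t(\cV_n)$ are \emph{never} algebraically independent over $H^*_\sT(\pt)[t]$: $H^*_\sT(X)$ is a finite $H^*_\sT(\pt)$-module, so every tautological class is integral over the base, and localization at a $\sT$-fixed point only sends the Chern roots of $\cV_i$ to linear combinations of the framing parameters $u$, which certainly satisfy relations. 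The algebraic independence that drives injectivity comes entirely from the formal variables $\zeta_1,\dots,\zeta_n$, combined with the fact that $c_t(V_k)$ is monic of degree $\bv_k$ in $t$. The paper's argument is: given $f\in\ker\psi$, clear denominators so that $f$ is a polynomial over $\bZ\bP$; write $\psi(x_k)=\zeta_k t^{\bv_k}\cdot(1+O(t^{-1}))$, where the correction is lower order in $t$; since $\zeta_1,\dots,\zeta_n$ are algebraically independent, so are the elements $\zeta_k t^{\bv_k}(1+O(t^{-1}))$ (the unit factors can be absorbed into the $\zeta_k$), whence $f=0$. This leading-term argument in $t$ and $\zeta$ is also what controls the interference between the $\zeta$-monomials $\zeta^\alpha$ coming from the cluster monomials and those coming from the $Q$'s hidden in the quantum corrections --- a point your sketch does not address. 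Finally, the appeal to the Laurent phenomenon to ``upgrade'' injectivity is not a mechanism; in the paper the Laurent phenomenon is used only to locate the images of the cluster variables in the stated subalgebra, while denominators in $\ker\psi$ are handled simply by multiplying by a monomial in the $x_i$ and using that $\cF$ is a field.
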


Here $\cA$ is the cluster algebra associated with $\bfQ$, for which we mainly follow the setting in \cite{FZ-4}. 
We label $\bfI = \{1, \cdots, n\}$, $\bfF = \{n+1, \cdots, n+m\}$, and $(b_{ij})_{1\leq i\leq m, \, 1\leq j\leq n}$ is the adjacency matrix.
The $x_k$, $1\leq k\leq n$ are the initial cluster variables, and $x'_k = \mu_k (x_k)$ are their \emph{adjacent} mutations. 
Variables $x_i$, $n+1\leq i\leq n+m$ are the coefficients, which for us come from the equivariant parameters associated with frozen nodes.
The parameters $\zeta_i$'s are related to the K\"ahler parameters via $Q^{(k)} = (-1)^{\bv_k^- - \bv_k } \prod_{i=1}^{m} \zeta_i^{- b_{ik}}$ , $1\leq k\leq n$. 

Applying our main theorem, several additional properties are also available. 
The $\bg$-vectors \cite[(6.4)]{FZ-4} of the cluster algebra are related to the exponents of the $\zeta$-parameters in the $Q$-expansions of the cluster variables (see Prop. ~\ref{Prop-g-vector}).
For quivers of type $A$ with nice dimension vectors $\bv$, we can find the images of all \emph{non-initial cluster variables}
(see Prop. ~\ref{Prop-cl-var-A}). 
Finally, we compare our results with the case of actual quantum cohomology $\QH_\sT^\GW (X)$ in the case of \emph{quiver flag varieties} \cite{Cra}. 
In view of the work of \cite{GK}, for quiver flag varieties with nice dimension vectors, our results also lead to a cluster algebra structure in $\QH_\sT^\GW (X) [t] \otimes_{\bC[Q]} \bC[\zeta^{\pm 1}]$ (Cor. ~\ref{Cor-isom-QH-GW}). 
In particular, in the case of partial flag varieties, this recovers the \cite[Thm. ~1.3]{HZ}.

\subsection{Acknowledgements}

The authors would like to thank Sergey Fomin, Weiqiang He and Yaoxiong Wen for helpful discussions and collaborations. 
Special thanks go to Jiarui Fei for organizing the workshop on Cluster Algebra and Enumerative Geometry, where we learned a lot about cluster algebras.
The work of Y.~Z. is supported by the NSFC grant 12301080, and the startup grant from Shanghai Jiao Tong University. 
Z.~Z.~is supported by NSFC grant 12401077, and the startup grant from Shanghai Jiao Tong University.

\vspace{4ex}

\section{Quasimap quantum cohomology}\label{sec:quasimapqcoh}

\subsection{Quasimaps and $I$-functions} \label{Sec-quasimap}

Let $\sG$ be a reductive group over $\bC$, with $\pi_1(\sG)$ torsion free, and $\sN$ be a faithful $\sG$-representation. 
Let $\sT$ be a torus acting linearly on $\sN$, which commutes with $\sG$.
Let $\theta \in \chr(\sG)$ be a chosen stability condition. 
We make the following assumptions:
\begin{itemize}
\setlength{\parskip}{1ex}


\item $\theta$ is \emph{generic} in the sense that
$
\sN^{\sG \text{-s}} = \sN^{\sG \text{-ss}} .
$

\item The $\sG$-action on $\sN^{\sG \text{-s}}$ is free. 

\end{itemize}
Note that the equality $\sN^{\sG \text{-s}} = \sN^{\sG \text{-ss}}$ implies that they are nonempty.
The GIT quotient
$$
X := \sN /\!\!/_\theta \sG = \sN^{\sG \text{-s}} / \sG
$$
is a smooth quasi-projective variety. 
Let $\fX := [\sN / \sG]$ be the stacky quotient. 
Then $X \subset \fX$ can be viewed as an open subscheme. 
The $\sT$-action naturally descends to $X$.

\begin{Definition}
A \emph{quasimap} from $\bP^1$ to $X$ is a map $f: \bP^1 \to \fX$, such that $\bP^1 \backslash f^{-1}(X)$ is $0$-dimensional. 
\end{Definition}

Alternatively, a quasimap is equivalent to the datum $(P, \sigma)$, where $P \to \bP^1$ is a principal $\sG$-bundle, and $\sigma$ is a section of the associated vector bundle $P \times_\sG \sN$, which generically takes values in the stable locus $\sN^{\sG \text{-s}}$. 

The composition of a quasimap $f: \bP^1 \to [\sN / \sG]$ with the projection $\sN \to \pt$ induces a map $\bP^1 \to B \sG$, which gives an element $\deg f \in \pi_2 (B \sG) \cong \pi_1 (\sG)$. 
We call it the \emph{degree}\footnote{In existing literature such as \cite{CKM}, $\deg f$ is often defined as an element in $\Hom (\Pic_\sG (\bP^1), \bZ )$. When $\pi_1 (\sG)$ is torsion-free, this is equivalent to our definition.} of the quasimap.
Equivalently, $\deg f$ is also the degree of the principal $\sG$-bundle $P$.

Given $\beta\in \pi_1(\sG)$, we denote by 
$
 \QM (X, \beta)
$
the moduli stack of quasimaps of degree $\beta$ from $\bP^1$ to $X$.
It is a separated Deligne--Mumford stack of finite type.
The $\sT$-action on $X$ induces a $\sT$-action on $\QM(X, \beta)$.
Let $\QM(X) := \bigsqcup_\beta \QM(X, \beta)$.
Consider the universal diagram
\[
\xymatrix{
\cE := ( \cP \times_\sG \sN )  \ar[d] &   \\ 
\QM (X) \times \bP^1 \ar[d]_\pi \ar@/^1pc/[u]^\sigma \ar[r]^-f & \fX \\
\QM (X)
}
\]
Here $\QM (X) \times \bP^1$ is the universal domain curve, $\cP$ is the universal principal $\sG$-bundle, $\sigma$ is the universal section, and $f$ is the universal quasimap.

As in \cite{CKM}, a perfect obstruction theory is given on $\QM(X)$ by
$$
( R\pi_* f^* T_\fX )^\vee \cong ( R\pi_* \cE )^\vee \to \bL_{\QM(X)}.
$$
Note that here we don't allow the domain to vary; so this is already an absolute perfect obstruction theory. 
The standard construction \cite{BF} applies and there is a $\sT$-equivariant virtual fundamental class
$$
[\QM(X, \beta)]_\vir \in H_*^{\BM, \sT} (\QM(X, \beta)). 
$$

More generally, let $\bC^*_\bh$ be the $1$-dimensional torus scaling $\bP^1$, where $\bh \in H^*_{\bC^*_\bh} (\pt)$ is identified with the tangent weight $\bh = c_1 (T_0 \bP^1)$. 
The action naturally extends to the moduli stack $\QM(X)$, and the construction above makes sense $\bC^*_\bh$-equivariantly.

Let $\QM(X, \beta)_{\ns \, \infty}
$
be the open substack, where the quasimaps $f$ sends $\infty \in \bP^1$ to the stable locus $X \subset \fX$. 
There are evaluation maps
$$
\ev_\infty: \QM(X, \beta)_{\ns \, \infty} \to X, \qquad \ev_0: \QM(X, \beta)_{\ns \, \infty} \to \fX. 
$$
The restricition of $\ev_\infty$ to the fixed loci $\QM (X, \beta)^{\bC^*_\bh}_{\ns \, \infty}$ is proper. 
Therefore, the pushforward $\ev_{\infty *}$ is well-defined, if one passes to the localized theory 
$$
H_*^{\BM, \sT \times \bC^*_\bh} (\QM(X, \beta)_{\ns \, \infty})_\loc := H_*^{\BM, \sT \times \bC^*_\bh} (\QM(X, \beta)_{\ns \, \infty}) \otimes_\bC \Frac (H_{\sT \times \bC^*_\bh}^* (\pt)). 
$$

Let $\sK\subset \sG$ be a maximal torus, and denote by $\sX^*(\sK)$ the weight lattice. 
Recall that
$$
H^*_\sT (\fX) \cong H^*_{\sG \times \sT} (\pt) = H^*_\sT (\pt) \otimes_\bC \bC [\sX^* (\sK )]^\sW.
$$
As $\bC [\sX^* (\sK)]$ can be identified with functions $\bC[\Lie \sK]$, we denote by
$$
\xi = (\xi_1, \cdots, \xi_k), \qquad k = \dim_\bC \sK
$$
the collection of a basis of $\sX^*(\sK)$. 
Then $\bC [\sX^* (\sK)] \cong \bC[\xi_1, \cdots, \xi_k]$, and its elements are polynomials
$$
\tau(\xi) = \tau (\xi_1, \cdots, \xi_k).
$$
A general element in $H^*_\sT (\fX)$ is then a polynomial $\tau(\xi)$, symmetric under $\sW$, with coefficients in $H_\sT^* (\pt)$.

\begin{Definition} \label{Defn-I}
Let $\tau (\xi) \in H^*_\sT (\fX) \cong H^*_{ \sG \times \sT} (\pt) = H^*_\sT (\pt) \otimes_\bC \bC [\sX^* (\sK)]^\sW$.
The \emph{$I$-function with insertion} $\tau (\xi)$ is defined as
$$
I^{(\tau (\xi) )} (Q; \bh) := \sum_{\beta \in \Eff(X)} \ev_{\infty *} (\ev_0^* \tau (\xi) \cdot [\QM(X, \beta)_{\ns \, \infty}]_\vir ) \cdot  Q^\beta \ \in \ H^*_{\sT \times \bC^*_\bh} (X)_\loc[\![Q^{\Eff(X)}]\!],
$$
where $\Eff(X)$ is the cone of effective curve classes in $X$, and $(-) [\![Q^{\Eff(X)} ]\!]$ denotes the formal power series ring over the cone $\Eff(X)$.
\end{Definition}

\begin{Remark} \label{Rk-power-series}
A more precise notation for the power series ring in $Q^\beta$, $\beta \in \Eff(X)$, with coefficients in a ring $R$, is $R[\![Q^{\Eff(X)}]\!]$. 
For simplicity, we often abbreviate it as $R[\![Q]\!]$, when no confusion arises.
\end{Remark}

Recall that the open immersion $X \subset \fX$ gives the $\sT$-equivariant Kirwan surjection
$$
(-)|_X : H^*_\sT (\fX) \cong H^*_{ \sG \times \sT}(\pt) = H^*_\sT (\pt) \otimes_\bC \bC [\sX^* (\sK)]^\sW \twoheadrightarrow H^*_\sT (X). 
$$
where the image of $\tau(\xi) = \tau(\xi^\lambda, \ \lambda\in \sX^*(\sK))$ is nothing but the tautological class $\tau (c_1 (L_\chi), \chi \ \in \sX^* (\sK))$. 
In particular, the $Q$-constant term of the $I$-function is
$$
I^{(\tau (\xi) )} (Q; \bh)  |_{Q=0} = \tau(\xi) |_X. 
$$

Extended by linearity, $I$-functions with insertions define a map 
$$
I^{(-)} (Q; \bh) : H^*_{\sG \times \sT \times \bC^*_\bh} (\pt) [\![Q]\!] \to H^*_{\sT \times \bC^*_\bh} (X)_\loc[\![Q]\!]
$$
of $\bC [\![Q]\!]$-modules.

\begin{Remark}
If we take the insertion to be of the form $\exp (\tau(\xi) / \bh)$, these $I$-functions will be the same as the big $I$-functions in \cite{CK-big-I}.
\end{Remark}

\subsection{Quantum tautological classes and quasimap quantum cohomology}

Following ~\cite{Oko-lec, PSZ, SZ}, we consider another version of quasimaps.
Denote $C \cong \bP^1$ and consider the pair $(C, \infty)$.
By an \emph{expanded pair} of length $k$, we mean a nodal curve
$$
C[k] = C_0 \cup C_1 \cup \cdots C_k,
$$
where each $C_i \cong \bP^1$ for $i\geq 0$, and $C_i$ is glued with $C_{i+1}$ at the point $\infty \in C_i$ and $0 \in C_{i+1}$. 

\begin{Definition}
Let $X \subset \fX$ be a GIT quotient as above.
\begin{itemize}
\setlength{\parskip}{1ex}
\item A \emph{prestable relative quasimap} to $X$ is a map $f: C[k] \to \fX$, where $C[k]$ is an expanded pair of length $k$ for some $k\geq 0$, such that $C[k] \backslash f^{-1} (X)$ is of dimension $0$, and disjoint from all nodes and the last $\infty \in C_k$. 

\item An automorphism of a relative quasimap $f: C[k] \to \fX$ is an element of the torus $(\bC^*)^k$, where the $i$-th $\bC^*$ scales the $i$-th irreducible component $C_i$, for $1\leq i\leq k$. 

\item A \emph{stable relative quasimap} is a prestable relative quasimap, whose automorphism group is finite.
\end{itemize}
\end{Definition}

For simplicity, we will suppress the word ``stable" and use the name \emph{relative quasimap} for stable relative quasimaps.

Note that the $\bC^*$ scaling $C_0$ is \emph{not considered} as an automorphism group. 
We often refer to $C_0 \cong \bP^1$ as the \emph{rigid} component, and $C_i \cong \bP^1$ for $i\geq 1$ the \emph{bubble} components.

The idea of relative quasimaps comes from relative Donaldson--Thomas theory. 
It follows from the standard theory \cite{Li, LW, Zhou} that relative quasimaps fit nicely in families.
As we do not need the details of these foundational constructions, we briefly describe the main characters here.

First, for each $k\geq 0$, there is a notion of a standard family $C(k) \to \bA^k$ of expanded pairs, for which the central fiber at $0\in \bA^k$ is $C[k]$, the generic fiber is $C$ itself, and a general fiber is $C[l]$ for some $0\leq l\leq k$.
The standard family describes how expanded pairs deform/degenerate in a systematic way.
Taking into account the permutations of coordinates and passing to a direct limit, one can build a \emph{stack of expanded pairs} $\cA$, which is smooth Artin, and parameterizes all families of expanded pairs.
It comes with a universal family $\cC \to \cA$, with a universal divisor $D_\infty \subset \cC$.

Let 
$
\QM(X, \beta)_{\rel \, \infty}
$
be the moduli stack of relative quasimaps of degree $\beta$. 
The key feature of introducing the relative quasimaps is that the evaluation map
$$
\ev_\infty: \QM(X, \beta)_{\rel \, \infty} \to X
$$
is proper.

\begin{Definition}
Let $\tau (\xi) \in H^*_\sT (\fX) \cong H^*_{ \sG \times \sT} (\pt) = H^*_\sT (\pt) \otimes_\bC \bC [\sX^* (\sK)]^\sW$.
\begin{itemize}
\setlength{\parskip}{1ex}

\item The \emph{capped $I$-function with insertion} $\tau (\xi)$ is defined as
$$
\widehat I^{(\tau (\xi) )} (Q; \bh) := \sum_{\beta \in \Eff(X)} \ev_{\infty *} (\ev_0^* \tau (\xi) \cdot [\QM(X, \beta)_{\rel \, \infty}]_\vir ) Q^\beta \ \in \ H^*_{\sT \times \bC^*_\bh} (X) [\![Q]\!].
$$

\item The \emph{quantum tautological class} associated with $\tau(\xi)$ is the non-equivariant limit
$$
\widehat{\tau (\xi)} (Q) := \widehat I^{(\tau (\xi) )} (Q; \bh) \big|_{\bh = 0} \ \in \ H^*_\sT (X) [\![Q]\!].
$$

\end{itemize}

\end{Definition}

More generally, one may generalize the definitions to disjoint divisors. 
Fix $n$ points $p_1, \cdots, p_n$ on $C$, and consider expanded pairs associated with $(C, p_1 \sqcup \cdots \sqcup  p_n)$.
A relative quasimap then has a chain of rational curves (possibly of different lengths) attached to each of $p_i$'s.
We may consider the moduli stack
$
\QM (X, \beta)_{\rel \, p_1, \cdots, p_n} 
$
of relative quasimaps from an expanded pair of $(C, p_1 \sqcup \cdots \sqcup  p_n)$ to $X$, of degree $\beta$.
The evaluation maps
$$
\ev_i: \QM (X, \beta)_{\rel \, p_1, \cdots, p_n} \to X, \qquad 1\leq i\leq n
$$
are proper. 

\begin{Remark}
Quasimaps here may also be understood from the general quasimap theory ~\cite{CKM}, where $\QM (X, \beta)_{\rel \, p_1, \cdots, p_n}$ consists of the \emph{graph space} $(0+)$-stable quasimaps, i.e. with one parametrized 
component $C \cong \bP^1$. However, one needs to add an extra condition that all markings contract to distinct points on $C$.
\end{Remark}

A choice of decomposition $n = n_1 + n_2$ of the points into inputs and outputs defines the following operators:
\ben
C(p_{n_1+1}, \cdots, p_{n} \mid p_1, \cdots, p_{n_1}) &:=& \sum_{\beta \in \Eff(X) } Q^\beta \Big( (\ev_{p_{n_1+1}, \cdots, p_n}  )_* (\prod_{i=1}^{n_1} \ev_i^* (-) \cdot [ \QM (X, \beta)_{\rel \, p_1, \cdots, p_n} ]_\vir \Big) \\
&\in & \Hom \left( H^*_\sT (X) [\![Q]\!]^{\oplus n_1}, H^*_\sT (X) [\![Q]\!]^{\oplus n_2} \right) .  
\een
These operators admits nice properties, which we list below. 
For details, we refer to ~\cite[\S 4]{SZ} and ~\cite{CZ}.

\begin{itemize}
\setlength{\parskip}{1ex}
\item The operators satisfy a degeneration/gluing formula. 
When the domain $C$ degenerates into a union $C' \cup C''$ of rational curves, then
$$
C(p_{n_1+1}, \cdots, p_{n} \mid p_1, \cdots, p_{n_1}) = C' (\cdots \mid \cdots, \bullet) \circ C''(\bullet, \cdots \mid \cdots), 
$$
where $\bullet$ denotes the new relative point introduced by the node in $C' \cup C''$, and $\cdots$ stands for the original relative points that lie on either $C'$ or $C''$ after the degeneration.

\item When $n_1 = 2$, and $n_2 = 1$, it defines a \emph{quantum product}
$$
C(p_3 \mid p_1, p_2) : H^*_\sT (X) [\![Q]\!]^{\oplus 2} \to H^*_\sT (X) [\![Q]\!].
$$
It is associative due to the gluing formula.

\item When $n_1 = 0$, and $n_2 = 1$, it defines the \emph{quantum identity}
$$
C (p_1 \mid \ ) \in H^*_\sT (X) [\![Q]\!]
$$
which coincides with the quantum tautological class $\widehat{1} (Q)$, associated with the fundamental class $1\in  H^*_\sT (X)$.  
It is the identity for the above quantum product.

\item When $n_1 = n_2 = 1$, it is the identity
$$
C(p_2 \mid p_1) = \Id: H^*_\sT (X) [\![Q]\!] \to H^*_\sT (X) [\![Q]\!].
$$
\end{itemize}

\begin{Definition}\label{def:quasimapqcoh}
Define the $\sT$-equivariant \emph{quasimap quantum cohomology} of $X$ as 
$$
\QH_\sT (X) := H^*_\sT  (X) [\![Q]\!], 
$$
whose \emph{quantum product} is given by $* := C(p_3 \mid p_1, p_2)$, with \emph{quantum identity} $\widehat{1} (Q)$. 
\end{Definition}

An alternative, more explicit description of the quasimap quantum cohomology can be given as follows, in terms of the \emph{capping operator}:
$$
\Psi (Q; \bh)  := \sum_{\beta \in \Eff(X) } Q^\beta \cdot \ev_{p_1 *} \Big(  \ev^*_{p_2} (-) \cdot [ \QM (X, \beta)_{\rel \, p_1, \ns \, p_2} ]_\vir \Big) \in \End H^*_{\sT \times \bC^*_\bh}(X)_\loc [\![Q]\!].
$$
Note that $\Psi(0; \bh) = \Id$.
The capping operator satisfies the following nice properties.

\begin{Lemma} \label{Lemma-Psi}
\begin{enumerate}[label=\arabic*)]
\setlength{\parskip}{1ex}

\item $\widehat I^{(\tau(\xi))} (Q; \bh) = \Psi (Q; \bh) I^{(\tau (\xi))} (Q; \bh)$. 

\item The Laurent expansion of $\Psi(Q; \bh)$ at $\bh = 0$ lies in $\End H^*_\sT (X) [\![ \bh^{-1} ]\!] [\![Q]\!]$. 
In fact, the limit 
$$
\lim_{\bh \to \infty} \Psi(Q; \bh) = \Id. 
$$

\end{enumerate}
\end{Lemma}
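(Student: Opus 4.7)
The plan is to prove the two parts separately: Part (1) by a degeneration/gluing identity on the rigid domain, and Part (2) by $\bC^*_\bh$-virtual localization combined with a direct analysis at $Q=0$.

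For (1), I will degenerate the rigid curve $C \cong \bP^1$ into the nodal union $C_0 \cup_\bullet C_\infty$, where $C_0$ contains the marked point $0$ while $C_\infty$ contains the relative point $\infty$, glued at a new non-singular node $\bullet$. This is a natural one-parameter family inside the stack of expanded pairs for $(C,\infty)$, and the $\bC^*_\bh$-action on $\bP^1$ extends to the family. Applying the degeneration formula for relative quasimap invariants (the Li--Wu framework in the quasimap setting of \cite{CKM}, as used in \cite{SZ, CZ}), the virtual class $[\QM(X,\beta)_{\rel\,\infty}]_\vir$ decomposes over splittings $\beta = \beta_1 + \beta_2$, with convolution at the new $\ns$-point $\bullet$ given by the diagonal of $X$. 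The factor on $C_0$, carrying the insertion $\ev_0^*\tau(\xi)$ at $0$ and free evaluation at $\bullet$, sums over $\beta_1$ to produce $I^{(\tau(\xi))}(Q;\bh)$; the factor on $C_\infty$, with $\bullet$ non-singular and $\infty$ relative, sums over $\beta_2$ to produce $\Psi(Q;\bh)$. Composition along the diagonal at $\bullet$ and pushforward by $\ev_\infty$ then yield $\widehat I^{(\tau(\xi))} = \Psi \cdot I^{(\tau(\xi))}$, $\bC^*_\bh$-equivariantly.

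For (2), the assertion $\Psi(0;\bh) = \Id$ is immediate: the moduli $\QM(X,0)_{\rel\,p_1,\ns\,p_2}$ of degree-zero relative quasimaps parameterizes constant maps with empty bubble structure (stability together with $\beta = 0$ forces the expansion to have length zero), so it is isomorphic to $X$ with $\ev_{p_1} = \ev_{p_2} = \Id$. For $\beta \neq 0$, I apply $\bC^*_\bh$-virtual localization to $M_\beta := \QM(X,\beta)_{\rel\,p_1,\ns\,p_2}$. A $\bC^*_\bh$-fixed point is a relative quasimap whose underlying bundle-and-section datum is $\bC^*_\bh$-equivariant over the rigid $\bP^1$; since $p_2 = 0$ is non-singular, all nontrivial degree must concentrate at $p_1 = \infty$, either as a basepoint absorbed into the expansion or as degree on bubble components. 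A weight count on the virtual normal bundle, analogous to the capping-operator analysis of \cite{PSZ, SZ}, shows that every $\bC^*_\bh$-weight of $N^\vir_F$ for $\beta \neq 0$ is a strictly positive multiple of $\bh$. Consequently each fixed-locus contribution $\ev_{p_1*}\bigl(\ev_{p_2}^*(-) \cdot \iota_{F*}[F]_\vir / e(N^\vir_F)\bigr)$ has strictly negative $\bh$-order, which yields the inclusion in $\End H^*_\sT(X)[\![\bh^{-1}]\!][\![Q]\!]$ and makes $\lim_{\bh \to \infty} \Psi(Q;\bh)$ equal to the single $\beta = 0$ contribution $\Id$.

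The most delicate step is this weight analysis: verifying that no $\bC^*_\bh$-fixed component at positive $\beta$ produces a virtual normal weight that is zero or $\bh$-independent, which would spoil the asymptotics. This uses crucially that the rigid component $C$ is \emph{not} part of the expanded pair (so there is no automorphism on the rigid side contributing a zero $\bh$-weight tangent direction) and that the $\ns$ condition at $p_2$ excludes compensating zero-weight basepoints at $0$. Modulo this bookkeeping, both claims follow directly from the gluing/localization framework for relative quasimap moduli.
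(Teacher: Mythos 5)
Your proposal is correct and follows essentially the same route as the paper: part (1) is the standard degeneration/gluing argument from \cite{Oko-lec}, and part (2) amounts to re-deriving, via $\bC^*_\bh$-localization on $\QM(X,\beta)_{\rel\,p_1,\ns\,p_2}$, the formula $\Psi(Q;\bh)=\sum_\beta Q^\beta\,\ev_{p_1 *}\bigl(\ev_{p_2}^*(-)\cdot[\QM^\sim(X,\beta)]_\vir/(\bh+\psi_{p_2})\bigr)$ that the paper simply quotes from \cite{Oko-lec}. The one cosmetic imprecision is that the virtual normal contribution at each fixed locus is the single node-smoothing factor $\bh+\psi_{p_2}$ (with $\psi_{p_2}$ nilpotent), rather than a collection of weights each a ``strictly positive multiple of $\bh$,'' but this does not affect the $O(\bh^{-1})$ conclusion.
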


\begin{proof}
1) follows from a standard degeneration argument in \cite{Oko-lec}. 

2) follows from a cohomological analogue of \cite[\S 7.5.26]{Oko-lec}, which states that
$$
\Psi (Q; \bh) = \sum_{\beta \in \Eff(X) } Q^\beta \cdot \ev_{p_1 *} \Big(  \ev^*_{p_2} (-) \cdot  \frac{
 [ \QM^\sim (X, \beta)_{\rel \, p_1, \, p_2} ]_\vir }{
\bh + \psi_{p_2}
} \Big) 
$$
where $\QM^\sim (-)$ stands for the moduli of \emph{non-rigid} quasimaps, and $\psi_{p_2}$ is the $\psi$-class at $p_2$.
The only nontrivial contribution to the limit $
\lim_{\bh \to \infty} \Psi(Q; \bh) 
$ comes from the term with $\beta = 0$. 
\end{proof}

\begin{Lemma} \label{Lemma-product}
Given $\tau (\xi), \sigma(\xi) \in H^*_{ \sG \times \sT} (\pt)$, we have
$$
\widehat{\tau(\xi) \sigma(\xi)} (Q) = \widehat{\tau(\xi)}(Q) * \widehat{\sigma (\xi)} (Q). 
$$
\end{Lemma}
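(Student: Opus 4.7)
The plan is to realize both sides of the identity as a common $1$-relative-point integral carrying two non-rigid insertions on the rigid domain component, and then invoke the homotopy invariance of non-rigid evaluation maps to collapse the two insertions into one. Throughout, all constructions are first performed $\bC^*_\bh$-equivariantly and then specialized at $\bh \to 0$, whose regularity is ensured by Lemma~\ref{Lemma-Psi}(2).

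\textbf{Step 1: Unfolding the quantum product.} By the definition of the quantum product in Def.~\ref{def:quasimapqcoh},
$$
\widehat{\tau(\xi)}(Q) * \widehat{\sigma(\xi)}(Q) \;=\; C(p_3 \mid p_1, p_2)\bigl(\widehat{\tau(\xi)}(Q),\, \widehat{\sigma(\xi)}(Q)\bigr).
$$
Substituting the definition of the quantum tautological classes as $\ev_{\infty *}$ of non-rigid insertions on $\QM(X, \beta)_{\rel\infty}$, the composition of $\ev_{p_1}^* \circ \ev_\infty^{\tau}{}_*$ and $\ev_{p_2}^* \circ \ev_\infty^{\sigma}{}_*$ with the $3$-point operator realizes a fibre product along $X$. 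By the gluing/degeneration axiom listed after Def.~\ref{def:quasimapqcoh}, this fibre product is identified with an integral over a moduli of relative quasimaps whose domain is a nodal curve with two secondary bubble components attached at $p_1$ and $p_2$; on these bubbles the non-rigid insertions $\tau(\xi)$ and $\sigma(\xi)$ are placed at the $0$-ends.

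\textbf{Step 2: Reverse smoothing.} Applying the degeneration formula in reverse (smoothing the two introduced nodes simultaneously), the integral above is rewritten as
$$
\widehat{\tau(\xi)}(Q) * \widehat{\sigma(\xi)}(Q) \;=\; \sum_{\gamma \in \Eff(X)} Q^\gamma \cdot \ev_{p_3 \, *}\!\Bigl( \ev_{q_1}^* \tau(\xi) \cdot \ev_{q_2}^* \sigma(\xi) \cdot [\QM(X,\gamma)_{\rel p_3}]_\vir \Bigr) \bigg|_{\bh = 0},
$$
where $q_1 \neq q_2$ are any two fixed points on the rigid component of the domain curve, and both $\ev_{q_i}$'s go to $\fX$.

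\textbf{Step 3: Collapsing non-rigid insertions.} The insertions $\tau(\xi), \sigma(\xi)$ lie in $H^*_\sT(\fX) \cong H^*_{ \sG \times \sT}(\pt)$, and hence $\ev_{q_i}^*$ of such a class is a characteristic class of the restricted $\sG$-bundle $\cP\big|_{\QM \times \{q_i\}}$, where $\cP \to \QM(X,\gamma) \times \bP^1$ is the universal principal bundle. Since the rigid component $\bP^1$ is path-connected and $\cP$ varies continuously in the $\bP^1$-direction, these restrictions are topologically isomorphic $\sG$-bundles on $\QM(X,\gamma)$; therefore $\ev_{q_1}^*$ and $\ev_{q_2}^*$ agree on classes pulled back from $\fX$. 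This yields
$$
\ev_{q_1}^* \tau(\xi) \cdot \ev_{q_2}^* \sigma(\xi) \;=\; \ev_{q}^*\bigl( \tau(\xi)\sigma(\xi) \bigr)
$$
for any non-rigid marked point $q$, which after substitution into Step 2 is by definition $\widehat{\tau(\xi)\sigma(\xi)}(Q)$, completing the identification.

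\textbf{Main obstacle.} The delicate point is ensuring that the two applications of the degeneration/gluing axiom (first to unfold each $\widehat{(-)}(Q)$, then to resmooth into a single rigid domain with two non-rigid insertions) are compatible with the non-equivariant limit $\bh \to 0$, and that the expanded-target relative structure at $p_3$ is faithfully tracked through the bubbling on the $p_1, p_2$ sides. Regularity at $\bh = 0$ is handled by Lemma~\ref{Lemma-Psi}(2); the combinatorics of the degeneration formula, including the choice of Poincaré-dual bases at the introduced nodes, is the standard package of \cite{SZ, CZ} applied in the quasimap setting.
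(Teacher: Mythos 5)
Your argument is correct and is essentially the proof that the paper invokes by citing \cite[Lemma ~4.16]{SZ}: place both insertions as interior $\fX$-valued evaluations on the rigid component, identify the result with $\widehat{\tau(\xi)\sigma(\xi)}(Q)$ via homotopy invariance of the interior evaluation maps (the restrictions of the universal principal bundle to two interior points are $\sT$-equivariantly isomorphic), and degenerate so the two points bubble off to recover the quantum product. One minor slip that does not affect the argument: regularity at $\bh=0$ comes for free from properness of $\ev_\infty$ on the relative moduli (the capped classes live in non-localized cohomology), whereas Lemma~\ref{Lemma-Psi}(2) concerns the expansion at $\bh=\infty$.
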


\begin{proof}
This is \cite[Lemma ~4.16]{SZ}, whose proof works in general.
\end{proof}

\begin{Proposition} \label{Prop-QH-I}
Let $H^*_{ \sG \times \sT} (\pt) [\![Q]\!]$ be equipped with the usual ring structure.
\begin{enumerate}[label=\arabic*)]
\setlength{\parskip}{1ex}

\item The $H^*_\sT (\pt) [\![Q]\!]$-linear map 
\begin{equation} \label{eqn-quantum-taut}
H^*_{\sG \times \sT} (\pt) [\![Q]\!] \to H^*_\sT (X) [\![Q]\!], \qquad H^*_{ \sG \times \sT} (\pt) \ni \ \tau(\xi) \mapsto \widehat{\tau(\xi)} (Q)
\end{equation}
is a surjective map of $H^*_\sT (\pt) [\![Q]\!]$-algebras.

\item If $F(\xi, Q; \bh) \in H^*_{\sG \times \sT \times \bC^*_\bh} (\pt) [\![Q]\!]$ satisfies $I^{( F(\xi, Q; \bh) )} (Q; \bh) = 0$, then
$$
\widehat {F(\xi, Q; 0)} (Q) = 0.
$$
In other words, $F(\xi, Q; 0)$ lies in the kernel of \eqref{eqn-quantum-taut}.

\end{enumerate}

\end{Proposition}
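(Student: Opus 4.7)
The plan is to handle the two claims separately, leveraging the capping operator from Lemma \ref{Lemma-Psi} together with the multiplicativity result of Lemma \ref{Lemma-product}.

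For (1), the assignment $\tau(\xi) \mapsto \widehat{\tau(\xi)}(Q)$ extends uniquely by $H^*_\sT(\pt)[\![Q]\!]$-linearity, so the content is to verify that it respects products. Lemma \ref{Lemma-product} applied to monomials shows it intertwines the ordinary product on $H^*_{\sG\times\sT}(\pt)$ with the quantum product $*$ on $\QH_\sT(X)$. Unitality follows because, by construction, $\widehat{1}(Q) = C(p_1 \mid\ )$ is the quantum identity listed in Section 2.2. For surjectivity, the key observation is the $Q$-constant specialization $\widehat{\tau(\xi)}(Q)|_{Q=0} = \tau(\xi)|_X$ noted after Definition \ref{Defn-I}, which coincides with the Kirwan surjection and is therefore onto. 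One then runs a standard completion argument: fix a linear functional $\ell$ strictly positive on $\Eff(X)\setminus\{0\}$, so that $H^*_\sT(X)[\![Q]\!]$ is complete and Hausdorff with respect to the $\ell$-filtration. Given $\alpha(Q) \in H^*_\sT(X)[\![Q]\!]$, use Kirwan surjectivity to lift the leading $\ell$-degree term to some $\tau_0 \in H^*_{\sG\times \sT}(\pt)$, subtract $\widehat{\tau_0}(Q)$ from $\alpha(Q)$ to strictly increase the leading $\ell$-degree, and iterate; the partial sums converge to a preimage in $H^*_{\sG\times \sT}(\pt)[\![Q]\!]$.

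For (2), begin with the identity $\widehat I^{(F)}(Q;\bh) = \Psi(Q;\bh)\, I^{(F)}(Q;\bh)$ of Lemma \ref{Lemma-Psi}(1), extended to insertions depending on $\bh$ by $\bC[\![Q,\bh]\!]$-linearity. The hypothesis $I^{(F)}(Q;\bh) = 0$ forces $\widehat I^{(F)}(Q;\bh) = 0$ in the localized ring $H^*_{\sT\times \bC^*_\bh}(X)_\loc [\![Q]\!]$. However, because the evaluation map $\ev_\infty$ on $\QM(X,\beta)_{\rel\,\infty}$ is proper, $\widehat I^{(F)}(Q;\bh)$ is a priori an element of the un-localized ring $H^*_{\sT\times \bC^*_\bh}(X)[\![Q]\!]$, which injects into its localization; hence $\widehat I^{(F)}(Q;\bh) = 0$ already there. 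Now write $F(\xi,Q;\bh) = F(\xi,Q;0) + \bh\, G(\xi,Q;\bh)$ and apply linearity of $\widehat I^{(-)}$ in the insertion, so that setting $\bh=0$ gives $\widehat I^{(F)}(Q;\bh)|_{\bh=0} = \widehat I^{(F(\xi,Q;0))}(Q;\bh)|_{\bh=0} = \widehat{F(\xi,Q;0)}(Q)$, which is therefore zero.

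The main obstacle I anticipate is in part (2), namely justifying the passage from the equation in the localized ring to the un-localized one: one must argue that $H^*_{\sT\times\bC^*_\bh}(X)[\![Q]\!]$ has no torsion over the equivariant parameters so that the localization map is injective, or equivalently that the relative pushforward $\widehat I^{(F)}$ really lies in the un-localized theory. Under the running assumptions on $X$ (freeness of the $\sG$-action on stable loci and torsion-freeness of $\pi_1(\sG)$), this is essentially automatic, but care is needed to phrase it cleanly. By comparison, the completion step in part (1) is routine once the $\ell$-filtration on $\Eff(X)$ is in place, since $\Eff(X)$ is strongly convex.
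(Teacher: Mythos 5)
Your proof is correct and follows essentially the same route as the paper's: part (1) is Lemma~\ref{Lemma-product} plus Kirwan surjectivity plus a Nakayama-type argument (your explicit $\ell$-filtration successive-approximation is exactly what the paper's invocation of Nakayama's lemma amounts to), and part (2) is the capping identity of Lemma~\ref{Lemma-Psi} followed by expanding the insertion in powers of $\bh$ and setting $\bh=0$. The localization-injectivity point you flag in part (2) is harmless, since $\bC^*_\bh$ acts trivially on $X$ and hence $H^*_{\sT\times\bC^*_\bh}(X)$ is free over $\bC[\bh]$ and injects into its localization.
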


\begin{proof}
1): The surjectivity follows from Kirwan surjectivity and Nakayama's lemma.
Lemma ~\ref{Lemma-product} then shows that it is a ring map. 

2): Suppose that $I^{(F(\xi, Q; \bh))} (Q; \bh) = 0$. 
Then Lemma ~\ref{Lemma-Psi} implies that $\widehat I^{( F(\xi, Q; \bh))} (Q; \bh) = 0 \in H^*_{\sT \times \bC^*_\bh} (X) [\![Q]\!]$. 
Write $F(\xi, Q; \bh) = \sum_{n=0}^N F_n (\xi, Q) \bh^n$. 
We have
$$
0 = \widehat I^{(F(\xi, Q; \bh))} (Q; \bh) = \widehat I^{(F (\xi, Q; 0))} (Q; \bh)  + \sum_{n=1}^N \widehat I^{(F_n (\xi, Q))} (Q; \bh) \bh^n. 
$$
Each term $\widehat I^{(F_n (\xi, Q))} (Q; \bh) \bh^n$ lies in $\bh^n \cdot H^*_{\sT \times \bC^*_\bh} (X) [\![Q]\!]$. 
Set $\bh=0$, and we obtain $\widehat {F(\xi, Q; 0)} (Q) = \widehat I^{(F (\xi, Q; 0))} (Q; \bh) |_{\bh = 0} = 0$.
\end{proof}

\subsection{Abelianization of $I$-functions}

Recall that $\sK \subset \sG$ is the maximal torus. 
One can often study the geometry of $X$ in terms of the \emph{abelianization}, i.e.
$$
X^\ab := \sN /\!\!/_\theta \sK . 
$$
We assume that $\theta$ is chosen such that $\sN^{\sK \text{-ss}} = \sN^{\sK \text{-s}}$ and the $\sK$-action on $\sN^{\sK \text{-s}}$ is free. 
Then $X^\ab$ is a smooth quasi-projective toric variety.
By definition, it fits into the diagram
\[
\xymatrix{
Y := & \sN^{\sG \text{-s}} / \sK \ar[d]_\pi \ar@{^(->}[r]^-j & \sN^{\sK \text{-s}} / \sK  & =  X^\ab \\
X  = & \sN^{\sG \text{-s}} / \sG
}
\]
where $\pi$ is a $\sG / \sK$-fibration, and $j$ is an open immersion. 
Everything here is compatible with the $\sT$-action.

$H^*_\sT (Y)$ and $H^*_\sT (X^\ab)$ admit natural $\sW$-actions.
Fixing a Borel subgroup $\sB\subset \sG$, the map 
$\pi$ factors as an affine fibration followed by a $\sG / \sB$-fibration. 
The pullback $\pi^*$ identifies
$$
\pi^* : H^*_\sT (X) \cong H^*_\sT (Y )^\sW. 
$$

Let $\Phi$ be the set of all roots of $\sG$, and $\Phi_+$ be the positive roots. 
For each $\alpha \in \Phi$, there is an associated line bundle $L_\alpha$ on $X^\ab$.
$$
e = \prod_{\alpha\in \Phi_+} \alpha, \qquad  e|_{X^\ab} = \prod_{\alpha \in \Phi_+} c_1 (L_\alpha). 
$$
Cohomologies of the non-abelian and abelian quotients are related as stated in the following lemma.
The proof in the non-equivariant case is due to \cite{Bri, ES, Mar}, whose extension to the equivariant setting is straightforward.
See also \cite[~\S 3.1]{CKS}.

\begin{Lemma} \label{Lemma-ab-H}
\begin{enumerate}[label=\arabic*)]
\setlength{\parskip}{1ex}

\item $\ann (e|_{X^\ab}) \subset \ker j^*$. 

\item The map $(\pi^*)^{-1} \circ j^*$ induces an isomorphism of rings
$$
(\pi^*)^{-1} \circ j^*: \quad  ( H^*_\sT (X^\ab) / \ann (e|_{X^\ab}) )^\sW \cong  H^*_\sT (X). 
$$

\item If $H^*_\sT (X^\ab) \cong H^*_{\sK \times \sT} (\pt) / J^\ab$ for some ideal $J^\ab$, then there is a presentation
$$
H^*_\sT (X) \cong H^*_{\sK\times \sT} (\pt)^\sW / J, \qquad \text{where }
J = \Big\la e^{-1} \sum_{w\in \sW} (-1)^{l(w)} w \cdot f \ \Big| \ f\in J^\ab \Big\ra. 
$$
\end{enumerate}
\end{Lemma}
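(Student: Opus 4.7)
The plan is to adapt the classical non-equivariant arguments of Brion, Ellingsrud--Str{\o}mme, and Martin to the $\sT$-equivariant setting, which poses no essential difficulty since $\sT$ commutes with $\sG$ and preserves every structure in sight. The geometric backbone consists of two observations. First, $\pi: Y \to X$ is a Zariski-locally trivial fibration with fiber $\sG/\sK$, obtained by taking the $\sK$-quotient of the principal $\sG$-bundle $\sN^{\sG\text{-s}}\to X$; the equivariant Leray--Hirsch theorem combined with $H^*(\sG/\sK)^\sW=\bC$ already yields the isomorphism $\pi^*: H^*_\sT(X) \xrightarrow{\sim} H^*_\sT(Y)^\sW$. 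Second, $j: Y\hookrightarrow X^\ab$ is an open immersion, with complement $\Sigma:=X^\ab\setminus Y$ equal to the $\sK$-quotient of the $\sG$-unstable but $\sK$-semistable locus. Thus everything reduces to understanding $j^*$ together with its interaction with the $\sW$-action.

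For part (1), I would follow the strategy of Ellingsrud--Str{\o}mme and Brion: analyze $\Sigma$ via the Hesselink--Kirwan--Ness stratification, which decomposes it into finitely many locally closed $\sK\times\sT$-invariant strata, each (up to Weyl translation) indexed by a positive root $\alpha\in\Phi_+$ whose associated cocharacter realizes the instability. A local calculation at each stratum exhibits it as the vanishing locus of a section whose equivariant Euler class carries the factor $c_1(L_\alpha)$. Combined with the excision long exact sequence
$$H^*_{\sT,\Sigma}(X^\ab) \to H^*_\sT(X^\ab) \xrightarrow{j^*} H^*_\sT(Y)\to\cdots,$$
a cohomological argument then shows that multiplying by $e|_{X^\ab}=\prod_{\alpha\in\Phi_+}c_1(L_\alpha)$ isolates the contribution from each stratum's normal bundle, so that a class killed by $e|_{X^\ab}$ is forced to lie in the image of $H^*_{\sT,\Sigma}(X^\ab)$ and hence in $\ker j^*$.

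Part (2) is then formal: $j^*$ factors to give a ring map $(H^*_\sT(X^\ab)/\ann(e|_{X^\ab}))^\sW\to H^*_\sT(Y)^\sW = H^*_\sT(X)$. Surjectivity comes from the fact that the standard Kirwan surjection $H^*_{\sK\times\sT}(\pt)^\sW\twoheadrightarrow H^*_\sT(X)$ factors through it, and injectivity follows from the classical push-pull argument of Brion, whose integration formula provides the inverse via Weyl anti-symmetrization divided by $e$. Part (3) is then a purely algebraic consequence: given $H^*_\sT(X^\ab)\cong H^*_{\sK\times\sT}(\pt)/J^\ab$, the standard identity that the $\sW$-anti-invariants of a $\sW$-module equal $e$ times its $\sW$-invariants converts $\ann(e|_{X^\ab})^\sW$ into the explicit generators $e^{-1}\sum_{w\in\sW}(-1)^{l(w)}w\cdot f$ for $f\in J^\ab$.

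The main difficulty I anticipate lies in part (1) together with the injectivity in part (2); both rest on carefully matching the $\sT$-equivariant local cohomology of $X^\ab$ near unstable strata with the flag-bundle structure on $Y$, and in the equivariant setting require uniform bookkeeping of $\sT$-weights alongside $\sK$-weights across all Kirwan strata. All remaining steps are essentially formal algebra once these geometric inputs are in place.
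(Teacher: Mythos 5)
Your proposal takes essentially the same route as the paper, which does not write out a proof of this lemma but defers exactly to Brion, Ellingsrud--Str{\o}mme, and Martin (and \cite[\S 3.1]{CKS}) for the non-equivariant case and asserts that the $\sT$-equivariant extension is straightforward. Your outline is a faithful reconstruction of those arguments — the $\sG/\sK$-fibration structure of $\pi$ and the Kirwan--Ness stratification of $X^\ab\setminus Y$ for parts (1)--(2), and the identity (anti-invariants of the polynomial ring) $= e\cdot(\text{invariants})$ for part (3) — so it is correct and consistent with the paper's approach.
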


The $I$-functions are also related.
Let $\sX_*(\sK)$ be the cocharacter lattice. 
Recall that $\pi_1 (\sG) \cong \sX_* (\sK) / \bZ \Phi$, where $\bZ \Phi$ is the root lattice for $\sG$.
For any $d\in \sX_* (\sK)$, we denote its image by
$$
d \ \mapsto \ \bar d \ \in \ \pi_1 (\sG) \cong  \sX_* (\sK) / \bZ \Phi.
$$
By \cite[Lemma ~5.1.1]{Web}, 
$\Eff(X^\ab)$ maps onto $\Eff(X)$. 
Denote
$$
\gamma (\xi; \bh) := \prod_{\alpha \in \Phi} \prod_{l =0}^\infty (\alpha - l \bh). 
$$

\begin{Proposition} \label{Prop-abelianization-I}
Let $\tau(\xi) \in H^*_\sT(\pt) \otimes_\bC H^*_\sK(\pt)^\sW$. 
We have
$$
 I^{(\tau(\xi))}_X (Q; \bh) = (\pi^*)^{-1} j^* \left( \frac{1}{\gamma (\xi; \bh) |_{X^\ab} } \cdot  I^{( \gamma (\xi; \bh) \cdot \tau(\xi))}_{X^\ab} (\widetilde Q; \bh) \right) \Big|_{\widetilde Q \mapsto Q}, 
$$
where $\widetilde Q \mapsto Q$ means the specialization of K\"ahler parameters
$$
\widetilde Q^{d} \mapsto Q^{\bar d}.
$$
\end{Proposition}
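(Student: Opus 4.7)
The plan is to reduce the statement to Webb's abelianization theorem \cite{Web} (the $\tau(\xi)=1$ case) by exploiting the $\sW$-invariance of the insertion. First, I would recall Webb's identity
\[
I^{(1)}_X(Q;\bh) \;=\; (\pi^*)^{-1} j^* \Bigl( \gamma(\xi;\bh)|_{X^\ab}^{-1} \cdot I^{(\gamma(\xi;\bh))}_{X^\ab}(\widetilde Q;\bh) \Bigr)\Big|_{\widetilde Q \mapsto Q},
\]
which is established by a term-by-term comparison in $\beta = \bar d \in \Eff(X)$: for each $\bar d$, the virtual class $[\QM(X,\bar d)_{\ns\,\infty}]_\vir$ is related to a $\sW$-symmetrization over preimages $d\mapsto \bar d$ of $[\QM(X^\ab,d)_{\ns\,\infty}]_\vir$, with an extra Euler-class factor coming from the roots that accounts for the $\gamma(\xi;\bh)$ correction.

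Next, I would extend to general $\tau(\xi) \in H^*_\sT(\pt) \otimes H^*_\sK(\pt)^\sW$ by observing that $\tau(\xi)$, being $\sW$-invariant, defines compatible classes on both $\fX = [\sN/\sG]$ and $\fX^\ab = [\sN/\sK]$: under the natural projection $[\sN/\sK] \to [\sN/\sG]$, the class $\tau(\xi)$ on $\fX$ pulls back to $\tau(\xi)$ on $\fX^\ab$, essentially by construction of the two equivariant cohomology rings via $H^*_\sT(\pt) \otimes_\bC H^*_\sK(\pt)$ and its $\sW$-invariants. Consequently, the pullback $\ev_0^*\tau(\xi)$ is a well-defined class on both $\QM(X,\bar d)_{\ns\,\infty}$ and $\QM(X^\ab,d)_{\ns\,\infty}$, and these two classes correspond under the abelianization comparison.

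The key step is then to multiply both sides of Webb's identity by $\ev_0^*\tau(\xi)$. On the left-hand side, this transforms $I^{(1)}_X$ into $I^{(\tau(\xi))}_X$. On the right-hand side, multiplying the insertion in $I^{(\gamma(\xi;\bh))}_{X^\ab}$ by $\tau(\xi)$ produces $I^{(\gamma(\xi;\bh)\cdot\tau(\xi))}_{X^\ab}$. Because $\tau(\xi)$ is $\sW$-invariant, it commutes with the $\sW$-symmetrization implicit in $(\pi^*)^{-1}\circ j^*$ via Lemma~\ref{Lemma-ab-H}, so multiplication by $\ev_0^*\tau(\xi)$ passes cleanly through the comparison formula to yield the claimed identity.

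The main obstacle is justifying rigorously that the term-by-term virtual comparison of Webb commutes with multiplication by $\ev_0^*\tau(\xi)$. Concretely, one must verify that the universal bundles on the two sides — the universal $\sG$-bundle on $\QM(X,\bar d)_{\ns\,\infty} \times \bP^1$ and the universal $\sK$-bundles on $\QM(X^\ab,d)_{\ns\,\infty} \times \bP^1$ — match under the abelianization correspondence after restricting via $\ev_0$. Since $\tau(\xi)$ is a $\sW$-symmetric polynomial in the Chern roots, this matching is automatic once the Chern roots at $0$ of the $\sG$-bundle are identified with those of the associated direct sum of $\sK$-bundles, up to the $\sW$-action on degree labels. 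Thus the extension to general insertions does not require any new geometric input beyond Webb's construction, only a check that the residue computations producing the $\gamma(\xi;\bh)$ factor go through unaltered in the presence of the additional $\ev_0^*\tau(\xi)$-insertion.
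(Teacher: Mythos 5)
Your proposal is correct and follows essentially the same route as the paper: both reduce to Webb's abelianization comparison and observe that, since the universal bundles on the two sides are identified under the correspondence, inserting the $\sW$-invariant class $\ev_0^*\tau(\xi)$ passes through the term-by-term (degree-by-degree) comparison unchanged. The only cosmetic difference is that the paper states Webb's theorem in its root-factor form with insertion $\tau$ and then repackages it using $\ev_0^* \gamma (\xi; \bh) = \ev_\infty^* \gamma (\xi; \bh) |_{\alpha \mapsto \alpha + \la \alpha, d \ra \bh}$, whereas you start from the already-packaged $\tau=1$ identity and insert $\tau$ afterwards; the geometric content verified is the same.
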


\begin{proof}
\cite[Thm. ~1.1.1]{Web} can be generalized in a straightforward way to $I$-functions with insertions, and we have 
$$
I_{X, \bar d}^{(\tau(\xi))} (\bh) = (\pi^*)^{-1} j^* \sum_{d \mapsto \bar d} \left( \prod_{\alpha \in \Phi}  \frac{ \prod_{l=0}^\infty (\alpha + \la \alpha, d\ra \bh  - l\bh ) 
}{ 
\prod_{l=0}^\infty (\alpha  - l \bh ) 
} \right) \Bigg|_{X^\ab} \cdot I_{X^\ab, d}^{(\tau(\xi))} (\bh) , 
$$
where $I_{X, \bar d}^{(\tau(\xi))} (\bh)$ is the $Q^{\bar d}$-coefficient of the $I$-function of $X$, and similarly for $I_{X^\ab, d}^{(\tau(\xi))} (\bh)$.

Note that for any $\bC^*_\bh$-equivariant line bundle $L$ on $\bP^1$, we have $L|_0 - L_\infty = (\deg L)\bh$. 
So 
$$
\ev_0^* \gamma (\xi; \bh) = \ev_\infty^* \gamma (\xi; \bh) |_{\alpha \mapsto \alpha + \la \alpha, d \ra \bh}.
$$
The result then follows from a summation over degrees.
\end{proof}

\begin{Remark}
The function $\gamma (\xi; \bh)$ is an infinite product in terms of $\bh$. 
However, for each degree $d$, all but finitely many factors from $\gamma (\xi; \bh)$ in the insertion cancel with the prefactor $\dfrac{1}{\gamma (\xi; \bh) |_{X^\ab} }$. 
So the resulting function only involves a finite product.
\end{Remark}

For later use, we also need to consider quasimaps to the intermediate space $Y = \sN^{\sG\text{-s}} / \sK$, which is also an open subscheme of $\fX = [\sN / \sK]$. 
We have a $\sT$-equivariant Kirwan surjection
$$
(-)_Y = j^* \circ (-)|_{X^\ab} : \quad H_{\sK \times \sT}^*(\pt) \twoheadrightarrow H^*_\sT (Y). 
$$
Quasimaps can be defined in the same way as above, as well as $I$-functions, quantum tautological classes, and the quasimap quantum cohomology.

\begin{Lemma}
\begin{enumerate}[label=\arabic*)]
\setlength{\parskip}{1ex}

\item $e |_X \in H^*_\sT (Y)$ is not a zero divisor. 

\item Let $\tau(\xi) \in H^*_{\sK \times \sT} (\pt)$. 
Then
$$
j^* I^{(\tau(\xi))}_{X^\ab} (\widetilde Q; \bh) =  I^{(\tau(\xi))}_{Y} (\widetilde Q; \bh) . 
$$
\end{enumerate}
\end{Lemma}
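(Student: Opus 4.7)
The plan is to prove both parts via open-embedding techniques relating $Y$ to the abelian quotient $X^\ab$, combined with Lemma~\ref{Lemma-ab-H}.

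For part (1), interpreting $e|_X$ as $e|_Y := j^*(e|_{X^\ab}) \in H^*_\sT(Y)$, I would combine the bundle structure of $\pi: Y \to X$ with Atiyah--Bott localization. The factorization $Y \to \sN^{\sG\text{-s}}/\sB \to X$ as an affine $(\sB/\sK)$-bundle followed by a $(\sG/\sB)$-flag bundle, together with Leray--Hirsch, presents $H^*_\sT(Y)$ as a free module of rank $|\sW|$ over $H^*_\sT(X)$. Since $\sT$-equivariant formality holds in the GIT setup under consideration, $H^*_\sT(Y)$ is torsion-free over $H^*_\sT(\pt)$, hence embeds into its fixed-point localization $\bigoplus_{y \in Y^\sT} \Frac(H^*_\sT(\pt))$. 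At each fixed point $y \in Y^\sT$, the freeness of the $\sG$-action on the stable locus ensures that all $\sT$-weights of the line bundles $L_\alpha|_y$ are nonzero, so $e|_Y$ restricts to a unit; multiplication by $e|_Y$ is therefore injective after localization, and torsion-freeness transports this back to $H^*_\sT(Y)$.

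For part (2), I would argue by base change. The open immersion $j: Y \hookrightarrow X^\ab$ gives a natural morphism $\QM(Y, d)_{\ns\,\infty} \to \QM(X^\ab, d)_{\ns\,\infty}$ compatible with $\ev_0$ and $\ev_\infty$. The key observation is that upon restricting to $\bC^*_\bh$-fixed loci, where $\ev_{\infty,*}$ is defined, the square
\[
\xymatrix{
\QM(Y,d)_{\ns\,\infty}^{\bC^*_\bh} \ar[d]_{\ev_\infty} \ar@{^(->}[r] & \QM(X^\ab,d)_{\ns\,\infty}^{\bC^*_\bh} \ar[d]^{\ev_\infty} \\
Y \ar@{^(->}[r]^j & X^\ab
}
\]
is Cartesian. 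Indeed, the base points of a $\bC^*_\bh$-fixed quasimap lie in the $\bC^*_\bh$-fixed locus $\{0, \infty\} \subset \bP^1$; the $\ns\,\infty$ condition rules out $\infty$, and $f^{-1}(X^\ab\setminus Y)$ is a $\bC^*_\bh$-invariant closed subset of $\bP^1$ disjoint from $\infty$ (since $f(\infty) \in Y$), so it reduces to $\emptyset$ or $\{0\}$ and is automatically zero-dimensional. The virtual fundamental classes are compatible with the base change, as both perfect obstruction theories descend from $(R\pi_*\cE)^\vee$ on the common ambient $[\sN/\sK]$. Proper base change for the $\bC^*_\bh$-localized pushforward then yields $j^* \circ \ev_{\infty,*}^{X^\ab} = \ev_{\infty,*}^Y$, and summing over $d$ gives the identity; contributions from $d \in \Eff(X^\ab) \setminus \Eff(Y)$ vanish because the corresponding $\bC^*_\bh$-fixed loci on the $Y$-side are empty.

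The main obstacle I expect is justifying the $H^*_\sT(\pt)$-torsion-freeness of $H^*_\sT(Y)$ used in part (1); the rest of the argument is standard once the Cartesian square is in place. A cleaner alternative would be to argue directly from an explicit Kirwan presentation $H^*_\sT(Y) \cong H^*_{\sK\times\sT}(\pt)/J_Y$, observing that $J_Y$ is generated by classes supported on the $\sG$-unstable locus, away from the support of $e$, so that $e$ cannot become a zero divisor modulo $J_Y$.
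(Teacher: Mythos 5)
For part (2) your argument is essentially the paper's own: the proof there also rests on the open immersion $j_{\QM}$, the compatibility $j_{\QM}^*[\QM(X^\ab,d)_{\ns\,\infty}]_\vir=[\QM(Y,d)_{\ns\,\infty}]_\vir$, the Cartesian square of $\bC^*_\bh$-fixed loci (cited from Webb's Diagram (31) rather than re-derived), base change along $\ev_\infty$, and $\ev_0^Y=\ev_0\circ j$. Your extra verifications --- why the square is Cartesian, and why degrees in $\Eff(X^\ab)$ not effective on $Y$ contribute nothing --- are correct and only make the argument more self-contained.

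Part (1) is where you genuinely diverge: the paper deduces it in one line from Lemma~\ref{Lemma-ab-H}(1), i.e.\ from $\ann(e|_{X^\ab})\subset\ker j^*$, whereas you localize. The localization route is viable, but the justification you give for its key step is wrong. At a $\sT$-fixed point $y=\sK v\in Y^\sT$ one gets a homomorphism $\kappa:\sT\to\sK$ with $t\cdot v=\kappa(t)^{-1}v$, and the $\sT$-weight of $L_\alpha|_y$ is $\alpha\circ\kappa$; freeness of the $\sG$-action on $\sN^{\sG\text{-s}}$ says nothing about whether $\kappa$ is regular, i.e.\ whether $\alpha\circ\kappa\neq0$ for all roots $\alpha$. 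What is actually needed is that the $\sT$-fixed locus of $X^\ab$ (hence of $Y$) is finite --- the standing hypothesis in the abelianization results of Webb that this whole section leans on --- since a non-regular $\kappa$ forces a positive-dimensional fixed locus in the fiber of $Y\to X$ over $\pi(y)$. This is not a removable technicality: for $\sT$ too small the statement is false. For instance, take $X=\Gr(2,4)$, so $Y=\bP^3\times\bP^3\setminus\Delta$ and $e|_Y=h_1-h_2$; non-equivariantly $h_1^3h_2^2\neq0$ in $H^{10}(Y)$ while $(h_1-h_2)\cdot h_1^3h_2^2=-h_1^3h_2^3=-h_1^3[\Delta]\in\ker j^*$, so $e|_Y$ is a zero divisor. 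Your closing ``cleaner alternative'' does not repair this: $e$ is a product of Chern classes with no proper support, so the phrase ``$\ker j^*$ is generated by classes away from the support of $e$'' has no content, and the example above shows the conclusion fails without the torus. In short: keep the localization skeleton, but replace the appeal to freeness of the $\sG$-action by the finiteness of $(X^\ab)^\sT$ (or cite Lemma~\ref{Lemma-ab-H} as the paper does).
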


\begin{proof}
1) follows from the fact that $\ann (e|_{X^\ab}) \subset \ker j^*$. 

For 2), note that under the open immersion $j_{\QM}: \QM(Y, d)_{\ns \, \infty} \hookrightarrow \QM(X^\ab, d)_{\ns \, \infty}$, we have $j_{\QM}^* [\QM(X^\ab, \beta)_{\ns \, \infty}]_\vir = [\QM(Y, \beta)_{\ns \, \infty}]_\vir$.  
Moreover, recall part of \cite[Diag. ~(31)]{Web} that after taking $\bC^*_\bh$-fixed loci, there is a Cartesian diagram
\[
\xymatrix{
\QM(Y, d)_{\ns \, \infty}^{\bC^*_\bh} \ar@{^(->}[d]_-{\ev^Y_\infty} \ar@{^(->}[r] & \QM(X^\ab, d)_{\ns \, \infty}^{\bC^*_\bh} \ar@{^(->}[d]^-{\ev_\infty} \\
Y \ar@{^(->}[r]^-j & X^\ab, 
}
\]
where $d\in \Eff(X^\ab)$. 
Therefore, the degree $d$ contribution is
\ben
j^* I^{(\tau(\xi))}_{X^\ab, d} (\bh) &=& j^* \ev_{\infty *} \left( \ev_0^* \tau (\xi) \cdot [\QM(X^\ab, d)_{\ns \, \infty}]_\vir \right) \\
&=& \ev_{\infty *}^Y j^* \left( \ev_0^* \tau (\xi) \cdot [\QM(X^\ab, d)_{\ns \, \infty}]_\vir \right) \\
&=& \ev_{\infty *}^Y \left( \ev_0^{Y*} \tau (\xi) \cdot [\QM(Y, d)_{\ns \, \infty}]_\vir \right) \\
&=& I^{(\tau(\xi))}_{Y, d} (\bh),
\een
where we have used the fact $\ev_0^Y = \ev_0 \circ j$.  
\end{proof}

\subsection{$\QH_\sT (X^\ab)$ via quantum differential equation}

It is a general feature that the $I$-function satisfies a quantum differential equation, which can be explicitly written down in the abelian case. 
In this section, we will consider insertions of the form $\tau (\xi) \in H^*_{\sK \times \sT} (\pt)$ (not necessarily $\sW$-symmetric), and call the $I$-function
$$
 I^{(\tau(\xi))}_{X^\ab} (\widetilde Q; \bh)
$$
an \emph{abelian $I$-function} with insertion.

\begin{Definition}
Let $\tau(\xi) \in H^*_{\sK \times \sT} (\pt)$. 
Define a renormalized version of the abelian $I$-function as
$$
\widetilde I^{(\tau(\xi))}_{X^\ab} (\widetilde Q; \bh) := \widetilde Q^{ \xi |_{X^{\ab}} / \bh} \cdot I^{(\tau(\xi))}_{X^\ab} (\widetilde Q; \bh) ,
$$
where the prefactor $\widetilde Q^{ \xi / \bh}$ is a multivalued function of $\widetilde Q$, which means the following.

Let $\{\xi_1, \cdots, \xi_k\}$ be a $\bZ$-basis of $\sX^*(\sK)$, $\{d_1, \cdots, d_k \}$ be its dual basis of $\sX_*(\sK)$.
Then
$$
\widetilde Q^{ \xi / \bh} := \prod_{j=1}^k ( \widetilde Q^{d_j} )^{  \xi_j / \bh}. 
$$
Independent of the choice of basis, it is a formal solution satisfying the differential equation\footnote{For any $\lambda \in \sX^*(\sK)$ and $d\in \sX_*(\sK)$, the differential operator $\lambda \widetilde Q \frac{\partial}{\partial \widetilde Q}$ on $\widetilde Q^d$ is defined as 
$$
\Big( \lambda \widetilde Q \frac{\partial}{\partial \widetilde Q}  \Big) \widetilde Q^d := \la \lambda, d \ra \widetilde Q^d.
$$}
$$
\Big( \lambda \bh \cdot \widetilde Q \frac{\partial}{\partial \widetilde Q} \Big) \widetilde Q^{\xi / \bh}  =  \lambda \cdot \widetilde Q^{ \xi / \bh}, \qquad \lambda \in \sX^*(\sK). 
$$
As a result, $\widetilde Q^{ \xi |_{X^{\ab}}/ \bh}$ is the ``restriction to $X^{\ab}$", satisfying
$$
\Big( \lambda \bh \cdot \widetilde Q \frac{\partial}{\partial \widetilde Q} \Big)  \widetilde Q^{\xi |_{X^{\ab}} / \bh}  =   ( \lambda |_{X^{\ab}}) \cdot  \widetilde Q^{\xi |_{X^{\ab}} / \bh} =  c_1^\sT (L_\lambda) \cdot  \widetilde Q^{ \xi |_{X^{\ab}} / \bh} , \qquad \lambda \in \sX^*(\sK). 
$$
\end{Definition}

An explicit formula for the $I$-function with insertion $1$ can be found in \cite[Thm. ~5.4]{CCK} (for a $K$-theoretic version, see ~\cite[~\S 3.2]{RWZ}), which was originally introduced as a definition of $I$-functions, see e.g. ~\cite{Giv}. 
The same applies to $I$-functions with insertions. 
Let
$$
\sN = \bigoplus_{i=1}^n \bC_{u_i + \lambda_i}, \qquad u_i \in \sX^*(\sT), \ \lambda_i \in \sX^*(\sK), 
$$
be the decomposition into $1$-dimensional $\sK \times \sT$-representations.
Then
\begin{equation} \label{eqn-explicit-I}
\widetilde I^{(\tau(\xi))}_{X^\ab} (\widetilde Q; \bh) = \widetilde Q^{ \xi |_{X^{\ab}} / \bh} \sum_{d\in \Eff(X^\ab)} \prod_{i=1}^n \frac{\prod_{l=0}^\infty (u_i + c_1 (L_{\lambda_i}) - l\bh)}{\prod_{l=0}^\infty (u_i + c_1 (L_{\lambda_i})  + \la \lambda_i, d \ra \bh - l \bh)} \cdot \tau(\xi + d\bh) |_{X^{\ab}} \cdot \widetilde Q^d,
\end{equation}
where the factor $\tau (\xi + d \bh)$ means the following.
Let $\xi_1, \cdots, \xi_k$ be a $\bZ$-basis, and write $\tau(\xi) = \tau(\xi_1, \cdots, \xi_k)$. 
Then
$$
\tau(\xi + d\bh) := \tau \left( \xi_1 + \la \xi_1, d \ra \bh, \cdots, \xi_k + \la \xi_k , d \ra \bh \right) , 
$$
and $\tau(\xi + d\bh) |_X$ means its restriction under the $\sT$-equivariant Kirwan surjection.

The following lemma shows how the differential operators act on the $I$-functions, which follows from a straightforward and explicit computation.

\begin{Lemma} \label{Lemma-diff-oper-I}
Given $\lambda \in \sX^* (\sK)$, 
$$
\Big( \lambda \bh \cdot \widetilde Q \frac{\partial}{\partial \widetilde Q} \Big) 
\, \widetilde I^{(\tau(\xi))}_{X^\ab} (\widetilde Q; \bh)  = \widetilde I^{(\lambda \cdot \tau(\xi))}_{X^\ab} (\widetilde Q; \bh) . 
$$
In particular, let $\xi_1, \cdots, \xi_k$ be a $\bZ$-basis and write $\tau(\xi) = \tau(\xi_1, \cdots, \xi_k)$.
Then
$$
\widetilde I^{(\tau(\xi))}_{X^\ab} (\widetilde Q; \bh)  = \tau \Big(  \xi_1 \bh \cdot \widetilde Q \frac{\partial}{\partial \widetilde Q} , \cdots,  \xi_k \bh \cdot \widetilde Q \frac{\partial}{\partial \widetilde Q} \Big) 
\, \widetilde I^{(1)}_{X^\ab} (\widetilde Q; \bh) .
$$
\end{Lemma}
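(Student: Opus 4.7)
The approach is a direct computation starting from the explicit Givental-style formula \eqref{eqn-explicit-I}. The key observation is that both the prefactor $\widetilde Q^{\xi|_{X^\ab}/\bh}$ and each monomial $\widetilde Q^d$ are eigenfunctions of the differential operator $\lambda \bh \cdot \widetilde Q \frac{\partial}{\partial \widetilde Q}$: by the defining property of the prefactor, the operator multiplies $\widetilde Q^{\xi|_{X^\ab}/\bh}$ by $c_1^\sT(L_\lambda) = \lambda|_{X^\ab}$, while acting on $\widetilde Q^d$ it yields $\la \lambda, d\ra \bh$. Thus Leibniz produces a common prefactor $\lambda|_{X^\ab} + \la \lambda, d \ra \bh$ on the $d$-th summand.

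The next step is to recognize this prefactor as exactly the restriction $\lambda(\xi + d\bh)|_{X^\ab}$. Combined with the factor $\tau(\xi+d\bh)|_{X^\ab}$ already present in the summand, this gives $(\lambda \cdot \tau)(\xi+d\bh)|_{X^\ab}$. Reassembling the sum then yields precisely $\widetilde I^{(\lambda \cdot \tau(\xi))}_{X^\ab}(\widetilde Q;\bh)$ via \eqref{eqn-explicit-I} applied to the insertion $\lambda \cdot \tau(\xi)$. So the first identity holds term-by-term in the degree expansion.

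The second identity follows by induction on the polynomial degree of $\tau$ in the basis $\xi_1, \ldots, \xi_k$. Applying the first identity repeatedly, starting from the insertion $1$, one sees that each multiplication by $\xi_j$ on the insertion side corresponds to applying the operator $\xi_j \bh \cdot \widetilde Q \frac{\partial}{\partial \widetilde Q}$ on the $\widetilde I$ side. Since these differential operators commute with each other and with scalars in $H^*_\sT(\pt)$, one can substitute them in any polynomial expression $\tau(\xi_1,\ldots,\xi_k)$ and obtain the stated formula.

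I expect the only subtle point is the formal handling of the multi-valued prefactor $\widetilde Q^{\xi|_{X^\ab}/\bh}$: one must verify that the Leibniz rule is legitimate with respect to the formal definition of the operator via its action on $\widetilde Q^d$ and on the prefactor, which is already built into the definition in the preceding paragraphs. Once that is unpacked, the computation is essentially bookkeeping and no further obstruction arises.
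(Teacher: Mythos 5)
Your proposal is correct and is exactly the computation the paper has in mind: the paper dispenses with this lemma by noting it ``follows from a straightforward and explicit computation'' from the formula \eqref{eqn-explicit-I}, and your argument — the eigenfunction property of $\widetilde Q^{\xi|_{X^\ab}/\bh}$ and $\widetilde Q^d$ producing the factor $\lambda|_{X^\ab}+\la\lambda,d\ra\bh=\lambda(\xi+d\bh)|_{X^\ab}$ on each summand, followed by iteration using commutativity of the operators — is precisely that computation, carried out correctly.
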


Now we arrive at an explicit presentation for a system of quantum differential equations satisfied by $\widetilde I^{(1)}_{X^\ab} (\widetilde Q; \bh)$, and the quasimap quantum cohomology $\QH (X^\ab)$\footnote{This is exactly Batyrev's presentation \cite{Bat}, but it works for all toric GIT quotients.}.

\begin{Proposition} \label{Prop-diff-QH-ab}
\begin{enumerate}[label=\arabic*)]
\setlength{\parskip}{1ex}

\item The abelian $I$-function $\widetilde I^{(1)}_{X^\ab} (\widetilde Q; \bh)$ is annihilated by the quantum differential operators
$$
\prod_{i: \la \lambda_i, d \ra >0} \prod_{m = 0}^{\la \lambda_i, d \ra -1} \Big(  \lambda_i \bh \cdot \widetilde Q \frac{\partial}{\partial \widetilde Q} + u_i - m\bh \Big) - 
\widetilde Q^d  
\prod_{i: \la \lambda_i, d \ra < 0} \prod_{m = 1}^{- \la \lambda_i, d \ra } \Big(  \lambda_i \bh \cdot \widetilde Q \frac{\partial}{\partial \widetilde Q} + u_i - m\bh \Big), 
$$
for all $d\in \Eff(X^\ab) \subset \sX_*(\sK)$.
\item We have
$$
\QH_\sT (X^\ab) \cong H_{\sK\times \sT}^* (\pt) [\![ \widetilde Q]\!] / \cJ^\ab,
$$ 
where the ideal $\cJ^\ab$ is generated by
\begin{equation} \label{eqn-QH-rel-ab}
\prod_{i: \la \lambda_i, d \ra >0} (u_i + \lambda_i)^{\la \lambda_i, d\ra} - \widetilde Q^d \prod_{i: \la \lambda_i, d \ra < 0} ( u_i + \lambda_i )^{- \la \lambda_i, d\ra}, 
\end{equation}
for all $d\in \Eff(X^\ab) \subset \sX_*(\sK)$.
\end{enumerate}
\end{Proposition}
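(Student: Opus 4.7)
The plan is to prove Part 1) by direct verification against the explicit hypergeometric formula \eqref{eqn-explicit-I}, and then to deduce Part 2) by translating Part 1) through Lemma \ref{Lemma-diff-oper-I} and Proposition \ref{Prop-QH-I}.

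For Part 1), I would start by observing that the operators $\lambda_i \bh \cdot \widetilde Q \tfrac{\partial}{\partial \widetilde Q}$ are commuting Euler fields on the group ring $\bC[\sX_*(\sK)]$, and hence products of them act diagonally on monomials. Applied to $\widetilde Q^{\xi|_{X^\ab}/\bh} \widetilde Q^{d'}$, each factor $\lambda_i \bh \widetilde Q \partial_{\widetilde Q} + u_i - m\bh$ becomes scalar multiplication by $B_i - m\bh$, where $B_i := u_i + c_1(L_{\lambda_i}) + \la\lambda_i, d'\ra\bh$. Extracting the $\widetilde Q^{d'}$-coefficient of the two summands reduces the assertion to a purely algebraic identity
\[
\Bigl( \prod_{i: \la\lambda_i, d\ra > 0} \prod_{m} (B_i - m\bh) \Bigr) H_{d'} \;=\; \Bigl( \prod_{i: \la\lambda_i, d\ra < 0} \prod_{m} (B_i - \la\lambda_i, d\ra\bh - m\bh) \Bigr) H_{d'-d},
\]
where $H_{d''}$ denotes the hypergeometric coefficient of $\widetilde Q^{d''}$ in $I^{(1)}_{X^\ab}$. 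To verify this, I would compute $H_{d'}/H_{d'-d}$ directly: for each $i$, the ratio of infinite products in the numerator and denominator telescopes to a finite product of the form $\prod_k (B_i + k\bh)^{\pm 1}$, producing exactly the factors needed to cancel the two sides.

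For Part 2), I plan to apply Lemma \ref{Lemma-diff-oper-I} to Part 1), which converts the annihilation of $\widetilde I^{(1)}_{X^\ab}$ into the vanishing of $I^{(F_d(\xi, \widetilde Q, \bh))}_{X^\ab}$, where $F_d(\xi, \widetilde Q, \bh)$ is obtained from the quantum differential operator by the substitution $\lambda_i \bh \widetilde Q \partial_{\widetilde Q} \mapsto \lambda_i$. Proposition \ref{Prop-QH-I} 2) then forces the non-equivariant limit $F_d(\xi, \widetilde Q; 0)$ — which is precisely the generator \eqref{eqn-QH-rel-ab} — to lie in the kernel of the surjective algebra map $H^*_{\sK\times\sT}(\pt)[\![\widetilde Q]\!] \twoheadrightarrow \QH_\sT(X^\ab)$, yielding the inclusion $\cJ^\ab \subseteq \ker$. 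For the reverse inclusion, I would reduce modulo $\widetilde Q$: the generators of $\cJ^\ab$ specialize to $\prod_{i: \la\lambda_i, d\ra > 0}(u_i + \lambda_i)^{\la\lambda_i, d\ra}$, which as $d$ ranges over $\Eff(X^\ab)$ cut out the Stanley--Reisner ideal of the smooth toric variety $X^\ab$ and recover the equivariant Kirwan presentation of $H^*_\sT(X^\ab)$. A standard Nakayama's-lemma argument in the $\widetilde Q$-adically complete setting then upgrades this identification modulo $\widetilde Q$ to the full isomorphism $H^*_{\sK\times\sT}(\pt)[\![\widetilde Q]\!]/\cJ^\ab \cong H^*_\sT(X^\ab)[\![\widetilde Q]\!] = \QH_\sT(X^\ab)$.

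The main obstacle is likely the combinatorial bookkeeping in Part 1): matching the finite products produced by the differential operator against the telescoping tails of $H_{d'}/H_{d'-d}$ requires careful tracking of the shifts by $\la\lambda_i, d\ra\bh$, and is especially delicate in the ranges of the indices $m$ in the positive and negative summands. A secondary concern in Part 2) is confirming that the effective cone generates enough primitive collections to exhaust the Stanley--Reisner ideal in the $\widetilde Q \to 0$ limit; this is standard in toric geometry but needs verification in the general quasi-projective setting considered here.
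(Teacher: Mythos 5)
Your proposal is correct and follows essentially the same route as the paper: the authors dispose of Part 1) by citing Givental and Iritani for exactly the hypergeometric verification you describe, and deduce Part 2) from Lemma~\ref{Lemma-diff-oper-I} together with Prop.~\ref{Prop-QH-I}; the graded Nakayama/flatness argument you supply for the reverse inclusion is not written out here but is precisely the one the authors carry out later in the proof of Thm.~\ref{Thm-QH} (with the $\widetilde Q=0$ input being the classical toric presentation, as in Lemma~\ref{Lemma-ab-H} and the reference to Batyrev), so your Part 2) is the intended completion of the paper's one-line deduction.

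One substantive remark on the point you flagged as delicate: your worry about the index ranges is justified, and if you carry out the telescoping honestly you will find a mismatch with the statement as printed. For an index $i$ with $\la \lambda_i, d\ra <0$, the ratio $H_{d'}/H_{d'-d}$ computed from \eqref{eqn-explicit-I} contributes the factor $\prod_{k=1}^{-\la\lambda_i,d\ra}(B_i + k\bh)$ (in your notation $B_i = u_i + c_1(L_{\lambda_i}) + \la\lambda_i,d'\ra\bh$), whereas the displayed operator contributes $\prod_{k=0}^{-\la\lambda_i,d\ra-1}(B_i + k\bh)$; these differ by the single factor $B_i - \la\lambda_i,d\ra\bh$ versus $B_i$. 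One can test this on $\sN = \bC_{\xi}\oplus\bC_{-\xi}$ with $\sK=\bC^*$. The fix is to let the second product in Part 1) run over $m = 0, \cdots, -\la\lambda_i,d\ra - 1$ rather than $m = 1, \cdots, -\la\lambda_i,d\ra$ (matching the standard GKZ convention). This is an off-by-one in the statement, not a flaw in your method: your verification goes through with the corrected range, and Part 2) and everything downstream are unaffected, since the $\bh\to 0$ symbol of either version of the operator is the same generator \eqref{eqn-QH-rel-ab}.
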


\begin{proof}
1) follows from a straightforward computation. 
It can be found in \cite{Giv}; see also \cite[\S 4.2]{Iri-int}. 
2) follows from 1), together with Prop. ~\ref{Prop-QH-I} and Lemma ~\ref{Lemma-diff-oper-I}.
\end{proof}

\begin{Remark}
For the quantum differential equations and quantum multiplication relations in Prop. ~\ref{Prop-diff-QH-ab}, it suffices to consider those where $d$ is a $\bZ$-generator of $\Eff(X^\ab)$.
In the language of toric varieties, $\Eff(X^\ab)$ is the Mori cone of $X^\ab$, and the minimal generating class consists of the primitive collections of the defining fan ~\cite{Bat}. 
Hence there are actually only finitely many independent quantum differential equations, and $\cJ^\ab$ is finitely generated.
\end{Remark}

\begin{Remark}
Note that in the above presentation, we are using the polynomials in the domain ring $H_\sK^* (\pt)$, i.e. the $\tau (\xi)$ in the map \eqref{eqn-quantum-taut}: $\tau(\xi) \mapsto \widehat{\tau(\xi)} (\widetilde Q)$, rather than the quantum tautological class $\widehat{\tau(\xi)} (\widetilde Q)$. 
In particular, the identity element is still $1$, not $\widehat{1} (\widetilde Q)$.
\end{Remark}

\begin{Remark} \label{Rk-quantum-D}
An alternative and deeper approach to understanding the above presentations is to consider the \emph{quantum $D$-module}; see e.g. ~\cite{Iri}. 
Let 
$$
\cD := H^*_\sT (\pt) [\bh] \Big\la \lambda \bh \cdot \widetilde Q \frac{\partial}{\partial \widetilde Q}, \ \lambda \in \sX^* (\sK); \ \widetilde Q^d, \ d \in \sX_*(\sK) \Big\ra, 
$$
be the ring of differential operators in $\widetilde Q$. 
The commutation relation
$$
\Big[   \lambda \bh \cdot \widetilde Q \frac{\partial}{\partial \widetilde Q} , \ \widetilde Q^d \Big] = \la \lambda, d \ra \bh \cdot  \widetilde Q^d
$$
implies that there is an increasing filtration of $\cD$, given by the order of $\lambda \bh \cdot \widetilde Q \dfrac{\partial}{\partial \widetilde Q}$'s.
Its associated graded ring is the polynomial ring
$$
\gr \cD = H_\sT^*(\pt) \left[ \lambda , \  \lambda \in \sX^* (\sK); \ \widetilde Q^d, \ d \in \sX_*(\sK) \right], 
$$
where the \emph{symbol map} sends 
$$
\lambda \bh \cdot \widetilde Q \frac{\partial}{\partial \widetilde Q} \mapsto \lambda, \qquad \bh \mapsto 0.
$$
The relations in Prop. ~\ref{Prop-diff-QH-ab} 1) form a left ideal $\cJ^\ab_\nc$ in $\cD$, whose quotient $\cM^\ab := \cD / \cJ^\ab_\nc$ is a left $\cD$-module, which we may refer to as the \emph{quantum $D$-module generated by} $\widetilde I^{(1)}_{X^\ab} (\widetilde Q; \bh)$. 
The relations in Prop. ~\ref{Prop-diff-QH-ab} 2) form the associated graded ideal $\gr \cJ^\ab_\nc \subset \gr \cD$. 
We have $\QH_\sT (X^\ab) \cong \gr \cD / \gr \cJ^\ab_\nc = \gr \cM^\ab$. 
\end{Remark}

\begin{Lemma} \label{Lemma-0-div}
Let $F(\xi, \widetilde Q; \bh), G(\xi,  \widetilde Q; \bh) \in H_{\sK \times \sT \times \bC^*_\bh}^*(\pt) [\![\widetilde Q]\!]$. 
Suppose that 
\begin{enumerate}[label=(\arabic*)]
\setlength{\parskip}{1ex}
\item $\widehat{F(\xi,  \widetilde Q; 0)} ( \widetilde Q)$ is not a zero divisor in $\QH_\sT (X^\ab)$; 

\item $I_{X^\ab}^{(F(\xi,  \widetilde Q; \bh) G(\xi,  \widetilde Q; \bh))} ( \widetilde Q; \bh) = 0$. 
\end{enumerate}
Then $I_{X^\ab}^{( G(\xi,  \widetilde Q; \bh))} ( \widetilde Q; \bh) = 0$.
The same holds if we replace $X^\ab$ with $Y$.
\end{Lemma}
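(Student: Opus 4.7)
The plan is to transfer the vanishing $I^{(FG)}(\widetilde Q;\bh)=0$ into a vanishing of the capped $I$-function $\widehat I^{(FG)}(\widetilde Q;\bh)=0$, apply the nonzero-divisor hypothesis inside $\QH_\sT(X^\ab)$ at $\bh=0$, and then bootstrap up in powers of $\bh$ to recover the full vanishing $I^{(G)}(\widetilde Q;\bh)=0$.

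First I would observe that, by Lemma~\ref{Lemma-Psi}(2), the capping operator $\Psi(\widetilde Q;\bh)$ equals the identity at $\widetilde Q=0$ and is therefore invertible as an endomorphism of $H^*_{\sT\times\bC^*_\bh}(X^\ab)_\loc[\![\widetilde Q]\!]$. The formula $\widehat I^{(\tau)}=\Psi\, I^{(\tau)}$ then yields the equivalence $I^{(\tau)}=0\Leftrightarrow\widehat I^{(\tau)}=0$, so it is enough to work with the capped $I$-function, which has the virtue of living in $H^*_\sT(X^\ab)[\bh][\![\widetilde Q]\!]$ (polynomial in $\bh$).

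Next, I would specialize $\widehat I^{(FG)}(\widetilde Q;\bh)=0$ to $\bh=0$. By (the $\bC[\![\widetilde Q]\!]$-linear extension of) Prop.~\ref{Prop-QH-I}(2), this gives $\widehat{(FG)(\xi,\widetilde Q,0)}(\widetilde Q)=0$ in $\QH_\sT(X^\ab)$. A straightforward extension of Lemma~\ref{Lemma-product} to $\widetilde Q$-series coefficients factorizes this as
\[
\widehat{F(\xi,\widetilde Q,0)}(\widetilde Q)\;\ast\;\widehat{G(\xi,\widetilde Q,0)}(\widetilde Q)\;=\;0,
\]
and the nonzero-divisor assumption~(1) forces $\widehat{G(\xi,\widetilde Q,0)}(\widetilde Q)=0$. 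Equivalently, the $\bh=0$ coefficient of $\widehat I^{(G)}(\widetilde Q;\bh)$ vanishes, so $\widehat I^{(G)}$ is already divisible by $\bh$.

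The hard part will be the bootstrap, lifting the $\bh=0$ vanishing to a vanishing at all orders of $\bh$. I would decompose $G=G(\xi,\widetilde Q,0)+\bh\widetilde G$ and $FG=F\cdot G(\xi,\widetilde Q,0)+\bh\,F\widetilde G$. The first piece $\widehat I^{(F\cdot G(\xi,\widetilde Q,0))}$ is itself $\bh$-divisible (its $\bh=0$ value equals $\widehat{F|_{\bh=0}}\ast\widehat{G|_{\bh=0}}=0$), and dividing $\widehat I^{(FG)}=0$ through by $\bh$ in the polynomial ring $H^*_\sT(X^\ab)[\bh][\![\widetilde Q]\!]$ produces a new identity of the form $\widehat I^{(F\widetilde G)}=-\bh^{-1}\widehat I^{(F\cdot G(\xi,\widetilde Q,0))}$. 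The plan is then to iterate this procedure, at each stage using that $\widehat{F(\xi,\widetilde Q,0)}(\widetilde Q)$ continues to be a nonzero-divisor, so as to propagate the vanishing through successive $\bh$-orders until every $\bh$-coefficient of $\widehat I^{(G)}$ is forced to be zero. The main delicacy is that the quantum product $\ast$ is tied to $\bh=0$, so the successive nonzero-divisor assertions at higher $\bh$-orders have to be derived from the $\sT$-equivariant freeness of $H^*_\sT(X^\ab)$ over $H^*_\sT(\pt)$, or alternatively from the quantum $D$-module picture of Remark~\ref{Rk-quantum-D}, where the annihilator of $\widetilde I^{(1)}$ in $\cD$ has associated graded ideal $\cJ^\ab$ and a nonzero-divisor condition at the commutative graded level can be lifted to the noncommutative filtered level. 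The case of $Y$ is handled identically, using $j^{*}I^{(\tau)}_{X^\ab}=I^{(\tau)}_Y$ and the corresponding presentation for $\QH_\sT(Y)$.
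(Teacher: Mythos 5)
Your opening reduction ($I^{(FG)}=0\Leftrightarrow\widehat I^{(FG)}=0$ via invertibility of $\Psi$) and your $\bh=0$ step (extract $\widehat{F_0}\ast\widehat{G_0}=0$ and cancel $\widehat{F_0}$) are both fine, but they only yield $\widehat{G(\xi,\widetilde Q;0)}(\widetilde Q)=0$ in $\QH_\sT(X^\ab)$, which is far weaker than the desired conclusion $I^{(G)}(\widetilde Q;\bh)=0$: the quantum tautological map has a large kernel (all of $\cJ^\ab$), so vanishing of $\widehat{G_0}$ says only that $G_0$ is a combination of quantum relations, not that its $I$-function vanishes. The bootstrap you propose to bridge this gap does not close up. After writing $FG=FG_0+\bh F\widetilde G$ and dividing by $\bh$, the resulting identity $\widehat I^{(F\widetilde G)}=-\bh^{-1}\widehat I^{(FG_0)}$ has a right-hand side that is \emph{not} zero in general (again because $\widehat{G_0}=0$ does not force $\widehat I^{(FG_0)}=0$), and the $\bh^n$-coefficients of a capped $I$-function with a \emph{fixed} insertion are descendant-type corrections that are not expressible through the small quantum product. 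So the equation at order $\bh^n$ mixes the unknown $\bh^n$-coefficient of $\widehat I^{(G)}$ with uncontrolled lower-order contributions, and the nonzero-divisor hypothesis — which lives entirely at $\bh=0$ — cannot be applied at higher orders. Neither $\sT$-equivariant freeness nor iterating condition (1) repairs this.

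The missing idea is the one you mention only as an aside at the end: by Lemma~\ref{Lemma-diff-oper-I}, $\widetilde I^{(\tau)}_{X^\ab}$ is obtained by applying the operator $\tau\bigl(\lambda\bh\cdot\widetilde Q\tfrac{\partial}{\partial\widetilde Q}\bigr)$ to $\widetilde I^{(1)}_{X^\ab}$, so hypothesis (2) says that the composite operator $\hat F\hat G$ annihilates the cyclic generator $1$ of the quantum $\cD$-module $\cM^\ab=\cD/\cJ^\ab_\nc$ of Rem.~\ref{Rk-quantum-D}. Hypothesis (1) says exactly that the principal symbol of $\hat F$ is a nonzero divisor in $\gr\cM^\ab\cong\QH_\sT(X^\ab)$, and the standard filtered-to-graded lifting (if $\hat Fm=0$ with $m\neq 0$, the top symbol of $m$ would be killed by $\sigma(\hat F)$) shows $\hat F$ is a nonzero divisor on $\cM^\ab$ itself, whence $\hat G\cdot 1=0$ and $I^{(G)}=0$ in one stroke, with no order-by-order analysis in $\bh$. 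This is the paper's proof; if you promote your parenthetical remark about the quantum $\cD$-module to the main argument, the entire bootstrap becomes unnecessary. As written, however, the route you actually pursue has a genuine gap at its central step.
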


\begin{proof}
We use the language of quantum $\cD$-modules in Rem. ~\ref{Rk-quantum-D}. 
Let $1 \in \cM^\ab$ be the image of $1\in \cD$ in the quantum $\cD$-module. 
Condition (ii) implies that 
\begin{equation} \label{eqn-FG-0}
F \Big( \lambda \bh \cdot \widetilde Q \frac{\partial}{\partial \widetilde Q} ,  \widetilde Q; \bh \Big) 
\cdot 
G \Big( \lambda \bh \cdot \widetilde Q \frac{\partial}{\partial \widetilde Q} ,  \widetilde Q; \bh \Big) \cdot 1 = 0, \qquad \text{ in } \cM^\ab.
\end{equation}
Condition (i) now means that the image of $F \Big( \lambda \bh \cdot \widetilde Q \frac{\partial}{\partial \widetilde Q} ,  \widetilde Q; \bh \Big) $ in $\QH_\sT (X^\ab) \cong \gr \cM^\ab$ is not a zero divisor. 
As a result, $F \Big( \lambda \bh \cdot \widetilde Q \frac{\partial}{\partial \widetilde Q} ,  \widetilde Q; \bh \Big) \in \cD$ itself is not a zero divisor of $\cM^\ab$. 
The equation ~\eqref{eqn-FG-0} then implies
$$
G \Big( \lambda \bh \cdot \widetilde Q \frac{\partial}{\partial \widetilde Q} ,  \widetilde Q; \bh \Big) \cdot 1 = 0, \qquad \text{ in } \cM^\ab, 
$$
and hence the result.
\end{proof}

\subsection{$\QH_\sT (X)$ via abelianization}

\begin{Lemma} \label{Lemma-shift-Q}
Suppose that for some $F(\xi, \widetilde Q; \bh) \in H_{ \sK \times \sT}^* (\pt) [\![\widetilde Q]\!]$, the abelian $I$-function $I^{(F(\xi, \widetilde Q; \bh))}_{X^\ab} (\widetilde Q; \bh)$ lies in the non-localized equivariant cohomology $H^*_{\sT \times \bC^*_\bh} (X) [\![\widetilde Q]\!]$. 
Then
$$
\Big( \frac{1}{\gamma (\xi; \bh) |_{X^\ab}} I_{X^\ab}^{(\gamma(\xi; \bh) \cdot F(\xi, \widetilde Q; \bh))} (\widetilde Q; \bh) \Big) \Big|_{\bh = 0} = I^{(F(\xi, \widetilde Q; \bh))}_{X^\ab} (\widetilde Q_\sharp; \bh) \Big|_{\bh=0}, 
$$
where $\widetilde Q_\sharp$ means the shifted K\"ahler parameter $ (-1)^{ 2\rho} \cdot \widetilde Q$, i.e.
$$
\widetilde Q_\sharp^d := (-1)^{ \la 2\rho, d\ra } \cdot \widetilde Q^d, \qquad \rho := \frac{1}{2} \sum_{\alpha \in \Phi_+} \alpha. 
$$
\end{Lemma}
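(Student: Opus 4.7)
The plan is to substitute the explicit closed-form \eqref{eqn-explicit-I} for the abelian $I$-function into both sides and compare the coefficients of $\widetilde Q^d$ for each $d \in \Eff(X^\ab)$. Since the standard (non-root) factors in \eqref{eqn-explicit-I} do not involve the roots $\alpha \in \Phi$, the only discrepancy between the degree-$d$ contributions on the two sides will come from the ratio $\gamma(\xi+d\bh;\bh)|_{X^\ab}/\gamma(\xi;\bh)|_{X^\ab}$ on the LHS versus the substitution $\widetilde Q^d \mapsto \widetilde Q_\sharp^d = (-1)^{\langle 2\rho,d\rangle}\widetilde Q^d$ on the RHS. By $\bC[\![\widetilde Q]\!]$-linearity of both sides, it suffices to treat the case where $F = F(\xi;\bh)$ has no explicit $\widetilde Q$-dependence.

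The key computation is to pair each positive root $\alpha \in \Phi_+$ with its negative $-\alpha$ in the product expansion of $\gamma(\xi+d\bh;\bh)/\gamma(\xi;\bh)$. Setting $n := \langle\alpha,d\rangle$, a direct telescoping (splitting into the cases $n > 0$, $n = 0$, $n < 0$, and noting that in each case all but finitely many factors cancel between the two infinite products) yields
\[
\frac{\prod_{l=0}^\infty (\alpha + n\bh - l\bh)}{\prod_{l=0}^\infty(\alpha - l\bh)} \cdot \frac{\prod_{l=0}^\infty(-\alpha - n\bh - l\bh)}{\prod_{l=0}^\infty(-\alpha - l\bh)} = (-1)^n \cdot \frac{\alpha + n\bh}{\alpha}.
\]
At $\bh = 0$ the rational factor $(\alpha + n\bh)/\alpha$ specializes to $1$, so taking the product over $\alpha \in \Phi_+$ and invoking $2\rho = \sum_{\alpha \in \Phi_+}\alpha$ gives
\[
\left.\frac{\gamma(\xi+d\bh;\bh)|_{X^\ab}}{\gamma(\xi;\bh)|_{X^\ab}}\right|_{\bh=0} = \prod_{\alpha\in\Phi_+}(-1)^{\langle\alpha,d\rangle} = (-1)^{\langle 2\rho,d\rangle},
\]
which precisely matches the sign produced on the RHS by $\widetilde Q^d \mapsto \widetilde Q_\sharp^d$, finishing the termwise comparison.

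The main obstacle lies in justifying the $\bh \to 0$ limit rigorously: the intermediate factor $(\alpha + n\bh)/\alpha$ only makes sense in the localized equivariant cohomology, since $\alpha|_{X^\ab} = c_1(L_\alpha)$ is generally not invertible in $H^*_\sT(X^\ab)$. Writing the pair contribution as $(-1)^n(1 + n\bh/\alpha)$, the troublesome $1/\alpha$ terms appear only at positive order in $\bh$, so they are multiplied by $\bh$ and drop out upon specialization. The hypothesis that $I^{(F)}_{X^\ab}(\widetilde Q;\bh)$ lies in the non-localized cohomology is then used to guarantee that, when the pair ratios are combined with the remaining factors of the $I$-function, the resulting expression is regular at $\bh = 0$, so setting $\bh = 0$ commutes with the pairing-and-cancellation procedure.
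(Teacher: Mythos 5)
Your proof is correct and follows essentially the same route as the paper: substitute the explicit formula \eqref{eqn-explicit-I}, reduce to the ratio $\gamma(\xi+d\bh;\bh)/\gamma(\xi;\bh)$ on the degree-$d$ coefficient, and evaluate it at $\bh=0$ to produce the sign $(-1)^{\langle 2\rho,d\rangle}$. Your pairing of $\alpha$ with $-\alpha$ to get $(-1)^{\langle\alpha,d\rangle}(\alpha+\langle\alpha,d\rangle\bh)/\alpha$, and the remark that the $1/\alpha$ correction enters only at positive order in $\bh$, just makes explicit a cancellation the paper leaves implicit when it asserts the $\bh=0$ specialization of the finite product.
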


\begin{proof}
By the explicit formula \eqref{eqn-explicit-I}, 
\ben
\frac{1}{\gamma (\xi; \bh) |_X} I_{X^\ab}^{(\gamma (\xi; \bh) \cdot F(\xi, \widetilde Q; \bh))} (\widetilde Q; \bh) &=& \sum_{d\in \Eff(X^\ab)} \frac{\prod_{l=0}^\infty (u_i + c_1 (L_{\lambda_i}) - l\bh)}{\prod_{l=0}^\infty (u_i + c_1 (L_{\lambda_i})  + \la \lambda_i, d \ra \bh - l \bh)} \\
&&  \cdot \frac{\gamma  (\xi + d\bh; \bh) |_X}{\gamma (\xi; \bh) |_X} \cdot F (\xi + d\bh, \widetilde Q; \bh) |_X \cdot \widetilde Q^d\,.
\een
It then suffices to observe that
$$
\frac{\gamma (\xi + d\bh; \bh)}{\gamma(\xi; \bh)} = \prod_{\alpha\in \Phi} \frac{\prod_{l=0}^\infty (\alpha + \la \alpha, d \ra \bh -  l\bh)}{ \prod_{l=0}^\infty (\alpha - l\bh)} 
= \prod_{\substack{\alpha\in \Phi \\ \la \alpha, d \ra < 0 } } \frac{ 1 }{\prod_{l=0}^{- \la \alpha, d \ra -1 } (\alpha - l\bh)}
\prod_{\substack{\alpha\in \Phi \\ \la \alpha, d \ra > 0 } } \prod_{l=1}^{ \la \alpha, d \ra} (\alpha  + l \bh),
$$
whose $\bh=0$ specialization is
$$
\prod_{\substack{\alpha\in \Phi \\ \la \alpha, d \ra >0 } } (-1)^{\la \alpha, d\ra } = \prod_{\alpha\in \Phi_+ } (-1)^{\la \alpha, d\ra } ,
$$
since $(-1)^{\la \alpha, d \ra} = (-1)^{\la -\alpha, d \ra}$.
\end{proof}

We have an explicit presentation of the quasimap quantum cohomology.

\begin{Theorem} \label{Thm-QH}
We have
$$
\QH_\sT (X) \cong H_\sT^* (\pt) \otimes_\bC H_{ \sK }^*(\pt)^\sW [\![Q]\!] / \cJ,
$$
where $\cJ$ is the ideal generated by
\begin{equation} \label{eqn-QH-rel}
\frac{1}{e} \sum_{w\in \sW} (-1)^{l(w)} w \cdot \Bigg[ g(\xi) \cdot \Big(  \prod_{i: \la \lambda_i, d \ra >0} (u_i + \lambda_i )^{\la \lambda_i, d\ra} -   Q^{\bar d}_\sharp \prod_{i: \la \lambda_i, d \ra < 0} ( u_i + \lambda_i )^{- \la \lambda_i, d\ra} \Big)  \Bigg], 
\end{equation}
for all $d\in \Eff(X^\ab) \subset \sX_*(\sK)$ and $g(\xi) \in H_{ \sK \times \sT}^*(\pt)$. 
Here $Q_\sharp$ means the shifted K\"ahler parameter $ (-1)^{ 2\rho} \cdot Q$, i.e.\footnote{Note that this is well-defined, since if $\bar d = \bar c$ for $c, d\in \sX_*(\sK)$, then $c-d$ lies in the coroot lattice of $\sG$, whose pairing with $2\rho$ is in $2\bZ$.}
$$
 Q_\sharp^{\bar d} := (-1)^{ \la 2\rho, d\ra } \cdot Q^{\bar d}, \qquad \rho := \frac{1}{2} \sum_{\alpha \in \Phi_+} \alpha. 
$$
\end{Theorem}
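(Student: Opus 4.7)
The strategy is to apply Prop.~\ref{Prop-QH-I} to the quantum tautological surjection
$$
\Psi:\, H^*_\sT(\pt)\otimes H^*_\sK(\pt)^\sW[\![Q]\!]\twoheadrightarrow\QH_\sT(X),\qquad \tau(\xi)\mapsto\widehat{\tau(\xi)}(Q),
$$
and to show $\ker\Psi=\cJ$. For the inclusion $\cJ\subseteq\ker\Psi$, I would take a generator
$$
h(\xi;Q)=\frac{1}{e}\sum_{w\in\sW}(-1)^{l(w)}w\cdot\big[g(\xi)\big(P_d^\ab(\xi)-Q_\sharp^{\bar d}\,Q_d^\ab(\xi)\big)\big]
$$
of $\cJ$, where $P_d^\ab=\prod_{i:\la\lambda_i,d\ra>0}(u_i+\lambda_i)^{\la\lambda_i,d\ra}$ and $Q_d^\ab$ is the analogous negative factor, and lift it to the $\bh$-family
$$
H(\xi,Q;\bh):=\frac{1}{e}\sum_{w\in\sW}(-1)^{l(w)}w\cdot\Big[g(\xi)\big(P_d(\xi;\bh)-Q_\sharp^{\bar d}\,Q_d(\xi;\bh)\big)\Big],
$$
with $P_d(\xi;\bh):=\prod_{i:\la\lambda_i,d\ra>0}\prod_{m=0}^{\la\lambda_i,d\ra-1}(u_i+\lambda_i-m\bh)$ and $Q_d(\xi;\bh)$ analogous. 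Then $H|_{\bh=0}=h$, so by Prop.~\ref{Prop-QH-I} 2), it suffices to show $I_X^{(H)}(Q;\bh)=0$.

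To verify this vanishing, I would combine the abelian quantum differential equation with the abelianization formula. Prop.~\ref{Prop-diff-QH-ab} 1), together with Lemma~\ref{Lemma-diff-oper-I}, yields the identity
$$
I_{X^\ab}^{(g(\xi)(P_d(\xi;\bh)-\widetilde Q^d\,Q_d(\xi;\bh)))}(\widetilde Q;\bh)=0
\qquad\text{for all }g\in H^*_{\sK\times\sT}(\pt).
$$
Writing $H=\widetilde H_\sharp|_{\widetilde Q\to Q}$, where $\widetilde H_\sharp$ is obtained from $H$ by replacing $Q_\sharp^{\bar d}$ with $\widetilde Q_\sharp^d$, Prop.~\ref{Prop-abelianization-I} gives
$$
I_X^{(H)}(Q;\bh)=(\pi^*)^{-1}j^*\Big(\frac{1}{\gamma(\xi;\bh)|_{X^\ab}}\,I_{X^\ab}^{(\gamma(\xi;\bh)\,\widetilde H_\sharp)}(\widetilde Q;\bh)\Big)\Big|_{\widetilde Q\to Q}.
$$
The sign twist $\widetilde Q\to\widetilde Q_\sharp$ in $\widetilde H_\sharp$ is chosen precisely to compensate the factor $(-1)^{\la 2\rho,d\ra}$ produced by $\gamma$ via Lemma~\ref{Lemma-shift-Q}. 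Weyl-antisymmetrizing the abelian vanishing (which commutes with $\gamma$ since $\gamma$ is $\sW$-symmetric) and using Lemma~\ref{Lemma-0-div} on the intermediate space $Y$ to discard the $\gamma$-factor (since $e|_Y$ is not a zero divisor), the expression inside $(\pi^*)^{-1}j^*(\cdots)$ lies in $\ker j^*\supseteq\ann(e|_{X^\ab})$ by Lemma~\ref{Lemma-ab-H} 1), yielding $I_X^{(H)}(Q;\bh)=0$.

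For the reverse inclusion $\ker\Psi\subseteq\cJ$, I would pass to the $Q=0$ specialization. By Prop.~\ref{Prop-diff-QH-ab} 2) at $\widetilde Q=0$, the classical abelian ideal $J^\ab=\cJ^\ab|_{\widetilde Q=0}$ is generated by $\{P_d^\ab\}_{d\in\Eff(X^\ab)}$. By Lemma~\ref{Lemma-ab-H} 3), the corresponding Weyl antisymmetrization produces the classical ideal $J$ with $H^*_\sT(X)\cong H^*_{\sK\times\sT}(\pt)^\sW/J$, which matches $\cJ|_{Q=0}$. Hence the induced surjection $(H^*_\sT(\pt)\otimes H^*_\sK(\pt)^\sW[\![Q]\!])/\cJ\twoheadrightarrow\QH_\sT(X)$ is an isomorphism modulo $(Q)$, and Nakayama on the $(Q)$-adically complete ring upgrades this to an isomorphism in general. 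The main obstacle is the verification of $I_X^{(H)}=0$: one must carefully propagate the K\"ahler shift $Q\to Q_\sharp$ (which originates from $\gamma$ only at the $\bh=0$ level in Lemma~\ref{Lemma-shift-Q}) through the Weyl antisymmetrization while ensuring vanishing at all orders in $\bh$, which is where the use of Lemma~\ref{Lemma-0-div} to clear the $\gamma$-factor becomes essential.
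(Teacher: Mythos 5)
Your proposal follows essentially the same route as the paper's proof: the forward inclusion via the abelian quantum differential equation (Prop.~\ref{Prop-diff-QH-ab} together with Lemma~\ref{Lemma-diff-oper-I}), Weyl antisymmetrization, clearing the Euler factor with Lemma~\ref{Lemma-0-div} on the intermediate space $Y$, and the sign shift of Lemma~\ref{Lemma-shift-Q}, then the reverse inclusion via the $Q=0$ specialization (Lemma~\ref{Lemma-ab-H}) plus flatness and Nakayama. The only caveat is the one you flag yourself: since Lemma~\ref{Lemma-shift-Q} is an $\bh=0$ statement, this chain delivers only $I_X^{(H)}(Q;\bh)\big|_{\bh=0}=0$ rather than vanishing for all $\bh$, which is exactly what the paper settles for (and which suffices to place $H|_{\bh=0}$ in the kernel of the quantum tautological map, by the regularity of the capping operator at $\bh=0$).
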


\begin{proof}
Given $d\in \Eff(X^\ab) \subset \sX_*(\sK)$, let
$$
F_d (\xi, \widetilde Q; \bh) := g (\xi)
\cdot 
\Bigg( \prod_{i: \la \lambda_i, d \ra >0} \prod_{m = 0}^{\la \lambda_i, d \ra -1} (  u_i + \lambda_i - m\bh ) - 
\widetilde Q^d  
\prod_{i: \la \lambda_i, d \ra < 0} \prod_{m = 1}^{- \la \lambda_i, d \ra } ( u_i +  \lambda_i - m\bh )  
\Bigg)  .
$$
Prop. ~\ref{Prop-diff-QH-ab} 1) shows that 
$$
0 = \widetilde I^{(F_d (\xi, \widetilde Q; \bh))}_{X^\ab} (\widetilde Q; \bh) = F_d \Big( \lambda \widetilde Q \frac{\partial}{\partial \widetilde Q}, \widetilde Q; \bh \Big) \ \widetilde I^{(1)}_{X^\ab} (\widetilde Q; \bh).
$$
In particular, it lies in the non-localized equivariant cohomology.
Since $\widetilde I^{(1)}_{X^\ab} (\widetilde Q; \bh)$ is $\sW$-invariant (this follows from e.g \cite[Lemma ~3.5.1]{Web}), we have
$$
0 = w \cdot \widetilde I^{(F_d (\xi, \widetilde Q; \bh))}_{X^\ab} (\widetilde Q; \bh) = \Big( w \cdot F_d \Big( \lambda \widetilde Q \frac{\partial}{\partial \widetilde Q}, \widetilde Q; \bh \Big) \Big) \ \widetilde I^{(1)}_{X^\ab} (\widetilde Q; \bh) = \widetilde I^{(w \cdot F_d (\xi, \widetilde Q; \bh))}_{X^\ab} (\widetilde Q; \bh) .
$$
Summation over $\sW$ and pullback along $j$ give
$$
I^{\left(\sum_{w\in \sW} (-1)^{l(w)} w \cdot F_d (\xi, \widetilde Q; \bh) \right)}_{Y} (\widetilde Q; \bh) = j^* I^{\left(\sum_{w\in \sW} (-1)^{l(w)} w \cdot F_d (\xi, \widetilde Q; \bh) \right)}_{X^\ab} (\widetilde Q; \bh) = 0.
$$
Being $\sW$-anti-symmetric, the insertion factors into
$$
\sum_{w\in \sW} (-1)^{l(w)} w \cdot F_d (\xi, \widetilde Q; \bh) = e \cdot \Big( \frac{1}{e} \sum_{w\in \sW} (-1)^{l(w)} w \cdot F_d (\xi, \widetilde Q; \bh) \Big).
$$
On the other hand, the class $e|_Y$, and hence its quantum tautological class $\widehat e (Q)$ in $\QH(Y)$, is not a zero divisor. 
By Lemma ~\ref{Lemma-0-div}, and applying the shift $\widetilde Q \mapsto \widetilde Q_\sharp$, we have
$$
 j^* I^{\left( e^{-1} \sum_{w\in \sW} (-1)^{l(w)} w \cdot F_d (\xi, \widetilde Q_\sharp; \bh) \right)}_{X^\ab} (\widetilde Q_\sharp; \bh) = 0.
$$
Lemma ~\ref{Lemma-shift-Q} then implies 
$$
j^* \Big( \frac{1}{\gamma (\xi; \bh) |_{X^\ab}} I_{X^\ab}^{(\gamma (\xi; \bh) \cdot e^{-1} \sum_{w\in \sW} (-1)^{l(w)} w \cdot F_d (\xi, \widetilde Q_\sharp; \bh) )} (\widetilde Q; \bh) \Big) \Big|_{\bh = 0} = 0.
$$
Applying the specialization $\widetilde Q^{d} \mapsto Q^{\bar d}$ and then $(\pi^*)^{-1}$, we obtain
$$
I_X^{( e^{-1} \sum_{w\in \sW} (-1)^{l(w)} w \cdot F_d (\xi, Q_\sharp; \bh)  )} (Q, \bh) |_{\bh=0} = 0.
$$

Now let $\cJ' \subset H_\sT^*(\pt) \otimes_\bC H_\sK^*(\pt)^\sW [\![Q]\!]$ be the ideal generated by all relations of the form ~\eqref{eqn-QH-rel}. 
The above argument shows that $\cJ' \subset \cJ$, and there is a surjection of rings 
$$
\Pi: H_\sT^*(\pt) \otimes_\bC H_\sK^*(\pt)^\sW [\![Q]\!] / \cJ' \twoheadrightarrow H_\sT^*(\pt) \otimes_\bC H_\sK^*(\pt)^\sW [\![Q]\!] / \cJ.
$$
Moreover, by the explicit presentation ~\eqref{eqn-QH-rel}, we know that
\ben
\left( H_\sT^*(\pt) \otimes_\bC H_\sK^*(\pt)^\sW [\![Q]\!] / \cJ' \right) \otimes_{\bC [\![Q]\!]} \bC 
&\cong& H_\sT^*(\pt) \otimes_\bC H_\sK^*(\pt)^\sW / \im ( \cJ' \otimes_{\bC [\![Q]\!]} \bC )  \\
&=& H_\sT^*(\pt) \otimes_\bC H_\sK^*(\pt)^\sW / ( \cJ' |_{Q = 0} ) \\
&\cong&  H^*_\sT (X) ,
\een
where the last isomorphism is Lemma ~\ref{Lemma-ab-H} 3).
So $\Pi |_{Q=0}$ is the identity map on $H^*_\sT (X)$. 

On the other hand, given any $\beta \in \Eff(X)$, and $f (\xi, Q) \in H_\sT^*(\pt) \otimes_\bC H_\sK^*(\pt)^\sW [\![Q]\!]$, we have $\widehat{Q^\beta f (\xi, Q)} (Q) = Q^\beta \cdot \widehat{ f (\xi, Q)} (Q)$, and hence
$$
Q^\beta f (\xi, Q) \in \cJ  \quad \Rightarrow \quad f(\xi, Q) \in \cJ, 
$$
which implies that $H_\sT^*(\pt) \otimes_\bC H_\sK^*(\pt)^\sW [\![Q]\!] / \cJ$ is flat over $\bC[\![Q]\!]$.
Flatness of $H_\sT^*(\pt) \otimes_\bC H_\sK^*(\pt)^\sW [\![Q]\!] / \cJ$ over $ \bC [\![Q]\!]$ implies the short exact sequence
\[
\xymatrix{
0 \ar[r] & \ker \Pi \otimes_{ \bC [\![Q]\!]} \bC \ar[r] & H^*_\sT (X) \ar[r]^{\sim} & H^*_\sT (X) \ar[r] & 0.  
}
\]
Hence $\ker \Pi \otimes_{H_\sT^*(\pt) \otimes_\bC \bC [\![Q]\!]} \bC = 0$. 
On the other hand, $\sT$-equivariant Kirwan surjectivity implies the $\sT$-equivariant formality, i.e. $H^*_\sT (X)$, and hence $\QH(X)$ and $\ker \Pi$, are flat over $H^*_\sT (\pt)$.
Nakayama's lemma then implies that the restriction of $\ker \Pi = 0$ to any fiber of $\Spec H^*_\sT (\pt) \otimes_\bC \bC [\![Q]\!] \to H^*_\sT (\pt)$ vanishes.
We then conclude that $\Pi$ is an isomorphism, and $\cJ' = \cJ$.
\end{proof}

\subsection{Finiteness and variation of GIT}

In this subsection, we observe that the quantum relations in Thm. ~\ref{Thm-QH} exhibit certain symmetries which ``invert" the K\"ahler parameters. 
This will actually lead to an invariance property of the quasimap quantum cohomology under a variation of GIT. 

Let $X$ be as above. 
According to Thm. ~\ref{Thm-QH}, we define the polynomial quasimap quantum cohomology as\footnote{As in Rem. ~\ref{Rk-power-series}, $(-) [Q]$ here precisely means $(-) [Q^{\Eff(X)} ]$.}
$$
\QH_\sT^\poly (X) :=  H^*_\sT (\pt) \otimes_\bC H_\sK^*(\pt)^\sW [Q] / \cJ_\poly, 
$$
where we $\cJ_\poly$ means the ideal in $H^*_\sT (\pt) \otimes_\bC H_\sK^*(\pt)^\sW [Q]$ generated by the same relations as in ~\eqref{eqn-QH-rel}. 

\begin{Corollary}[Finiteness]
\label{Cor-finiteness}

The polynomial quasimap quantum cohomology is a subalgebra
$$
\QH_\sT^\poly (X) \hookrightarrow \QH_\sT^\poly (X) \otimes_{\bC[Q]} \bC[\![Q]\!] \cong \QH_\sT (X). 
$$
In particular, let $\{ \tau_i (\xi), 1\leq i \leq N \} \subset H^*_\sT (\pt) \otimes_\bC H^*_\sK (\pt)^\sW$ be a collection of elements, such that $\{\tau_i (\xi)|_X\}$ forms a basis of $H^*_\sT (X)$ over $H^*_\sT (\pt)$. 
Then the structure constants of 
$$
\widehat{ \tau_i (\xi)} (Q) * \widehat{ \tau_j (\xi)} (Q)
$$
are polynomials in $Q^\beta$, $\beta \in \Eff(X)$.
\end{Corollary}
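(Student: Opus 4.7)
The plan is to prove that the natural ring map $\QH_\sT^\poly(X) \to \QH_\sT(X)$ is injective; the polynomiality of the structure constants will then be a direct consequence. First, I would note that the generators of $\cJ$ in ~\eqref{eqn-QH-rel} are already polynomial in $Q$, so $\cJ$ is precisely the extension of $\cJ_\poly$ to $H^*_\sT(\pt) \otimes H^*_\sK(\pt)^\sW[\![Q]\!]$. This gives the canonical identification $\QH_\sT^\poly(X) \otimes_{\bC[Q]} \bC[\![Q]\!] \cong \QH_\sT(X)$ asserted in the corollary, and reduces the corollary to the injectivity of $\QH_\sT^\poly(X) \to \QH_\sT(X)$.

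Next, using $\sT$-equivariant formality (as in the proof of Thm. ~\ref{Thm-QH}), I would fix lifts $\{\tau_i(\xi)\}_{i=1}^N \subset H^*_\sT(\pt) \otimes H^*_\sK(\pt)^\sW$ whose Kirwan images form an $H^*_\sT(\pt)$-basis of $H^*_\sT(X)$. The proof of Thm. ~\ref{Thm-QH} then shows that $\{\tau_i(\xi)\}$, viewed in $\QH_\sT(X)$, forms an $H^*_\sT(\pt) \otimes \bC[\![Q]\!]$-basis. I claim that it is likewise an $H^*_\sT(\pt) \otimes \bC[Q]$-basis of $\QH_\sT^\poly(X)$. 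Linear independence is immediate: any $H^*_\sT(\pt)[Q]$-linear dependence in $\QH_\sT^\poly$ descends to $\QH_\sT$, where the basis property forces all coefficients to vanish.

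The main obstacle is spanning. Given any $f(\xi) \in H^*_\sT(\pt) \otimes H^*_\sK(\pt)^\sW$, classical Kirwan surjectivity gives $f(\xi) = \sum_i a_i(\xi) \tau_i(\xi) + \sum_k h_k(\xi) P_k(\xi)$, where $P_k(\xi)$ are the $Q=0$ specializations of the generators of $\cJ_\poly$. Since each such generator has the form $P_k(\xi) - Q^{\bar d_k}_\sharp R_k(\xi)$ (after $\sW$-anti-symmetrization and division by $e$), the substitution $P_k(\xi) \equiv Q^{\bar d_k}_\sharp R_k(\xi) \pmod{\cJ_\poly}$ converts the classical residue into a term of strictly positive $Q$-degree. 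Iterating the process on the $\xi$-polynomial $\sum h_k(\xi) R_k(\xi)$ yields, in the limit, the desired expansion $f(\xi) \equiv \sum_i c_i(Q) \tau_i(\xi) \pmod{\cJ_\poly}$. The delicate point is termination in genuine polynomials rather than formal power series. I would approach this via the Groebner-type structure noted in the remark following Prop. ~\ref{Prop-diff-QH-ab}: the abelian relations associated with primitive collections admit a reduction by $\xi$-leading monomials that terminates in finitely many steps, and the $\sW$-anti-symmetric package in ~\eqref{eqn-QH-rel} inherits this finiteness. A complementary perspective is to use Noetherianness of $H^*_\sT(\pt)[Q]$ together with Krull's intersection theorem applied to the finitely generated module $\QH_\sT^\poly(X)$.

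Once the basis property is established, $\QH_\sT^\poly(X) \cong H^*_\sT(\pt)[Q]^N$ and its map to $\QH_\sT(X) \cong H^*_\sT(\pt)[\![Q]\!]^N$ is the evident inclusion, hence injective. For the structure-constants claim, I would expand $\tau_i(\xi) \cdot \tau_j(\xi)$ in $\QH_\sT^\poly(X)$ as $\sum_k c_{ij}^k(Q) \tau_k(\xi)$ with $c_{ij}^k \in H^*_\sT(\pt)[Q]$. Under the embedding $\tau(\xi) \mapsto \widehat{\tau(\xi)}(Q)$, Lemma ~\ref{Lemma-product} gives $\widehat{\tau_i(\xi)}(Q) * \widehat{\tau_j(\xi)}(Q) = \widehat{\tau_i(\xi)\tau_j(\xi)}(Q) = \sum_k c_{ij}^k(Q) \widehat{\tau_k(\xi)}(Q)$, finishing the proof.
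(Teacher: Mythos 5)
Your reduction---show that $\{\tau_i(\xi)\}$ is an $H^*_\sT(\pt)[Q]$-basis of $\QH_\sT^\poly(X)$, then read off the structure constants via Lemma~\ref{Lemma-product}---is the right one, and your linear-independence and structure-constant steps match the paper. The genuine gap is the spanning step, which you correctly single out as the main obstacle but do not close. The iterative substitution $P_k(\xi)\equiv Q^{\bar d_k}_\sharp R_k(\xi) \pmod{\cJ_\poly}$ strictly increases $Q$-order at every stage, so ``in the limit'' it produces coefficients $c_i(Q)$ that are a priori formal power series --- which is exactly the conclusion you are trying to rule out. The Gr\"obner-type termination you invoke would require the $Q$-free term of each relation \eqref{eqn-QH-rel} to dominate the $Q$-term in $\xi$-degree; the degree difference is $\la \det\sN, d\ra$, which is not assumed positive in this generality (no Fano-type hypothesis is in force), so the rewriting need not decrease any monomial order and can produce unbounded $\xi$-degrees. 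The fallback via Krull's intersection theorem ``applied to the finitely generated module $\QH_\sT^\poly(X)$'' is circular: finite generation of $\QH_\sT^\poly(X)$ as an $H^*_\sT(\pt)[Q]$-\emph{module} is precisely the spanning statement at issue (at this point it is only known to be a finitely generated algebra). Krull/Noetherianness is what the paper uses for the \emph{injectivity} $\QH^\poly_\sT(X)\hookrightarrow \QH^\poly_\sT(X)\otimes_{\bC[Q]}\bC[\![Q]\!]$, not for spanning.

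The paper closes this step with no termination argument at all: it packages $\widehat{\tau_i(\xi)}(Q)\mapsto \tau_i(\xi)$ into an isomorphism $f$ of free modules over $\bC[\![Q]\!]$ (this is Thm.~\ref{Thm-QH} plus Nakayama), observes that the analogous map $f_\poly$ over $\bC[Q]$ base-changes to $f$ along $\bC[Q]\to\bC[\![Q]\!]$, and concludes that $f_\poly$ is an isomorphism by (faithfully) flat descent; surjectivity of $f_\poly$ --- your spanning claim --- then comes for free from the power-series statement. To repair your proof, replace the Gr\"obner/Krull paragraph by this descent step (equivalently, a graded Nakayama argument applied to $\coker f_\poly$, using that the relations \eqref{eqn-QH-rel} are homogeneous). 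A secondary caution: your opening identification $\QH_\sT^\poly(X)\otimes_{\bC[Q]}\bC[\![Q]\!]\cong\QH_\sT(X)$ is not automatic from ``$\cJ$ is the extension of $\cJ_\poly$,'' since $H^*_{\sG\times\sT}(\pt)[Q]\otimes_{\bC[Q]}\bC[\![Q]\!]$ is strictly smaller than $H^*_{\sG\times\sT}(\pt)[\![Q]\!]$; that identification is also downstream of the finite-rank basis statement rather than an input to it.
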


\begin{proof}
The embedding of $\QH_\sT^\poly (X)$ into $\QH_\sT (X)$ follows from the Noetherian-ness of $\bC[Q]$.
Let $\{ \tau_i (\xi), 1\leq i \leq N \} \subset H^*_\sT (\pt) \otimes_\bC H^*_\sK (\pt)^\sW$ be a collection of elements as above. 
The map $f:  \widehat{\tau_i (\xi)} (Q) \mapsto \tau_i (\xi)$ gives an isomorphism of free $\bC[\![Q]\!]$-modules $H^*_\sT (X) [\![Q]\!] \cong H^*_\sT (\pt) \otimes_\bC H^*_\sK (\pt)^\sW [\![Q]\!] / \cJ $,
which is also an isomorphism of rings, where the LHS is endowed with the quasimap quantum product. 
This isomorphism fits into the diagram 
\[
\xymatrix{
 \bigoplus_{i=1}^N \widehat{\tau_i (\xi)} (Q) \cdot ( H^*_\sT (\pt) \otimes_\bC \bC[Q] ) \ar@{^(->}[d]^h \ar[r]^-{f_\poly} & H^*_\sT (\pt) \otimes_\bC H^*_\sK (\pt)^\sW [Q] / \cJ_\poly  \ar@{^(->}[d]_g \\
 \bigoplus_{i=1}^N \widehat{\tau_i (\xi)} (Q) \cdot ( H^*_\sT (\pt) \otimes_\bC \bC[\![Q]\!] ) \ar[r]^-\sim_-f & H^*_\sT (\pt) \otimes_\bC H^*_\sK (\pt)^\sW [\![Q]\!] / \cJ
}
\]
where $f$ and $g$ are also maps of algebras.
Now since $\bC[Q] \hookrightarrow \bC[\![Q]\!]$ is faithfully flat, $f$ being an isomorphism implies that $f_\poly$ is also an isomorphism. 
We then equip $ \bigoplus_{i=1}^N \widehat{\tau_i (\xi)} (Q) \cdot ( H^*_\sT (\pt) \otimes_\bC \bC[Q]) $ with the ring structure induced from $f_\poly$. 
The diagram becomes a commutative diagram of algebras, and the statement follows.
\end{proof}

%
%

On the other hand, we have an embedding of algebras
$$
\QH_\poly (X) \hookrightarrow \QH_\poly (X) \otimes_{\bC[Q]} \bC [Q^{\pm 1}] \cong H_\sK^*(\pt)^\sW [Q^{\pm 1}] / ( \cJ_X \otimes_{\bC[Q]} \bC[Q^{\pm 1}] ), 
$$
where $\bC[Q^{\pm 1}] := \bC[Q^{\pi_1 (\sG)}]$, and the injectivity follows from the fact that $Q$ is not a zero-divisor in $\QH(X)$, and hence in $\QH_\poly (X)$. 
The following result shows that upon the base change to $\bC[Q^{\pm 1}]$, the polynomial quasimap quantum cohomology  \emph{independent} of the choice of the generic stability condition $\theta$.

\begin{Corollary}[Variation of GIT]
\label{Cor-vGIT}

Let $\theta'$ be another generic stability condition, such that the $\sG$-action on $\sN^{\theta\text{-s}}$ is free.
Let $X'$ be the GIT quotient. 
Then
$$
\QH_\poly (X) \otimes_{\bC[Q]} \bC [Q^{\pm 1}] \ \cong \ \QH_\poly (X') \otimes_{\bC[Q]} \bC [Q^{\pm 1}]. 
$$
\end{Corollary}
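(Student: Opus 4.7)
The plan is to invoke the explicit presentation of Theorem~\ref{Thm-QH} for both quotients and reduce the problem to an equality of two ideals inside the same ambient ring after inverting $Q$. Applying Theorem~\ref{Thm-QH} to $X$ and $X'$ yields
$$
\QH_\sT^\poly(X) \cong S[Q]/\cJ_X, \qquad \QH_\sT^\poly(X') \cong S[Q]/\cJ_{X'},
$$
where $S := H_\sT^*(\pt) \otimes_\bC H_\sK^*(\pt)^\sW$ depends only on $(\sN,\sK,\sT)$ and is unchanged under variation of $\theta$. The ideals $\cJ_X$ and $\cJ_{X'}$ differ only in the cone indexing their generators: $\Eff(X^\ab)$ versus $\Eff((X')^\ab) \subset \sX_*(\sK)$. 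Hence the statement is equivalent to the equality $\cJ_X[Q^{-1}] = \cJ_{X'}[Q^{-1}]$ in $S[Q^{\pm 1}]$.

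Writing $R_d := A_d - Q^{\bar d}_\sharp B_d$ for the abelian relation appearing inside~\eqref{eqn-QH-rel} (with $A_d, B_d$ the two products there), two elementary identities drive the argument. First, a direct check gives $A_{-d} = B_d$ and $B_{-d} = A_d$, so
$$
R_{-d} \ = \ - Q^{-\bar d}_\sharp \, R_d,
$$
and after inverting $Q$ the indexing set becomes closed under negation. Second, if $d_1, d_2 \in \sX_*(\sK)$ lie in a common closed chamber of the hyperplane arrangement $\{\la \lambda_i, -\ra = 0\}$ (i.e.\ $\la \lambda_i, d_1\ra \la \lambda_i, d_2\ra \geq 0$ for every $i$), then $A_{d_1+d_2} = A_{d_1} A_{d_2}$ and $B_{d_1+d_2} = B_{d_1} B_{d_2}$, giving the in-chamber additivity
$$
R_{d_1+d_2} \ = \ A_{d_1} R_{d_2} + Q^{\bar d_2}_\sharp B_{d_2} R_{d_1}.
$$
Both identities persist after Weyl anti-symmetrization, since the extra factors $A_{d_1}, B_{d_2} \in H^*_{\sK\times\sT}(\pt)$ can be absorbed into the freely varying coefficient $g(\xi)$ in~\eqref{eqn-QH-rel}.

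The final step is to show that, consequently, both $\cJ_X[Q^{-1}]$ and $\cJ_{X'}[Q^{-1}]$ coincide with the ``universal'' ideal generated by the Weyl anti-symmetrized $R_d$ for every $d \in \sX_*(\sK)$; this ideal is manifestly independent of $\theta$. Under our genericity and freeness assumptions, $X^\ab$ is a smooth quasi-projective toric variety whose Mori cone $\Eff(X^\ab)$ is a full-dimensional rational polyhedral cone in $\sX_*(\sK)_\bR$, so an induction using the two identities, carried along a chain of adjacent chambers of the arrangement, should produce $R_d \in \cJ_X[Q^{-1}]$ for every $d \in \sX_*(\sK)$, and symmetrically for $X'$. \textbf{The main obstacle} is precisely this chamber-crossing induction: when a chamber containing $d$ does not itself meet $\Eff(X^\ab)$, the in-chamber additivity introduces spurious factors of the form $(u_i + \lambda_i)^{b}$ that must be removed. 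I expect to handle this by combining the negation identity with the non-zero-divisor property of $e$ on the intermediate space $Y$ (the first part of the lemma immediately preceding Lemma~\ref{Lemma-0-div}), mimicking the ``cleaning'' step at the end of the proof of Theorem~\ref{Thm-QH}.
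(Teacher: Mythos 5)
Your reduction to the equality of ideals after inverting $Q$, and the negation identity $R_{-d}=-Q^{-\bar d}_\sharp R_d$, together constitute essentially the \emph{entire} proof in the paper: the paper applies Theorem~\ref{Thm-QH} to both quotients, observes exactly this identity, and immediately concludes that after the base change to $\bC[Q^{\pm1}]$ both ideals are ``generated by the same relations for all $d$.'' It performs no chamber-crossing induction. The way this closes is by comparing \emph{generating sets} rather than trying to realize $R_d$ for every $d\in\sX_*(\sK)$: each ideal is already generated by the relations attached to the $\bZ$-generators of the respective Mori cones (see the Remark following Proposition~\ref{Prop-diff-QH-ab}), and for the quotients relevant to this paper (e.g.\ quiver varieties, where the generators are $\sign(\theta_k)e_j^{(k)}$ versus $\sign(\theta'_k)e_j^{(k)}$) the two generating sets agree up to sign, so the negation identity alone identifies the two ideals.

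The genuine gap in your proposal is therefore exactly where you flagged it, and your proposed repair would not work. When $d_1$ and $d_2$ lie in different chambers, the identity $A_{d_1}R_{d_2}+Q^{\bar d_2}_\sharp B_{d_2}R_{d_1}=C\cdot R_{d_1+d_2}$ produces the factor $C=\prod_i(u_i+\lambda_i)^{c_i}$, and the classes $u_i+\lambda_i$ restrict to (nilpotent) toric divisor classes, i.e.\ they are zero divisors in $\QH_\sT(X^{\ab})$ and $\QH_\sT(Y)$. The ``cleaning'' step at the end of the proof of Theorem~\ref{Thm-QH} divides out $e=\prod_{\alpha\in\Phi_+}\alpha$ precisely because $e|_Y$ is shown to be a non-zero-divisor; Lemma~\ref{Lemma-0-div} gives you nothing for the factors $u_i+\lambda_i$, so the induction across chambers does not close. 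The correct move is to drop the ambition of obtaining $R_d$ for all $d\in\sX_*(\sK)$ and instead match the finite generating sets of $\cJ_X\otimes_{\bC[Q]}\bC[Q^{\pm1}]$ and $\cJ_{X'}\otimes_{\bC[Q]}\bC[Q^{\pm1}]$ directly via the negation identity, which is what the paper does.
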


\begin{proof}
Let $d\in \Eff(X^\ab)$. 
The ideal $\cJ_X \otimes_{\bC[Q]} \bC[Q^{\pm 1}]$ is generated by the elements
$$
\frac{1}{e} \sum_{w\in \sW} (-1)^{l(w)} w \cdot \Bigg[ g(\xi) \cdot \Big(  \prod_{i: \, \la \lambda_i, d \ra >0} ( u_i + \lambda_i)^{\la \lambda_i, d \ra} -   Q^{\bar d}_\sharp \prod_{i: \, \la \lambda_i, d \ra < 0} (u_i + \lambda_i)^{- \la \lambda_i, d \ra} \Big)  \Bigg].
$$
It follows that the elements
$$
\frac{1}{e} \sum_{w\in \sW} (-1)^{l(w)} w \cdot \Bigg[ g(\xi) \cdot \Big( - Q^{-\bar d}_\sharp \prod_{i: \, \la \lambda_i, d \ra >0} ( u_i + \lambda_i)^{\la \lambda_i, d \ra}  +   \prod_{i: \, \la \lambda_i, d \ra < 0} (u_i + \lambda_i)^{- \la \lambda_i, d \ra} \Big)  \Big)  \Bigg], \qquad 1\leq j\leq r, 
$$
which is exactly the relation given by $-d$, also lies in  $\cJ_X \otimes_{\bC[Q]} \bC[Q^{\pm 1}]$.
In other words, $\cJ_X \otimes_{\bC[Q]} \bC[Q^{\pm 1}]$ can be rephrased as generated by the same relations \emph{for all} $d \in \sX_* (\sK)$. 
This implies $\cJ_{X'} \otimes_{\bC[Q]} \bC[Q^{\pm 1}] = \cJ_X \otimes_{\bC[Q]} \bC[Q^{\pm 1}]$, which proves the result.
\end{proof}


\subsection{Asymptotics and rigidity}

In this subsection, we prove some rigidity results that, under certain conditions, the quantum tautological classes $\widehat{\tau(\xi)} (Q)$ may coincide with their classical restrictions $\tau(\xi) |_X$.

Let $H^*_\sK (\pt)$ be equipped with the standard grading, where
$$
\deg \lambda =1, \qquad \text{for any } \lambda \neq 0 \in \sX^* (\sK) .
$$
For any $\tau(\xi) \in H_\sK^*(\pt)$, and  $d\in \sX_* (\sK)$, denote
$$
\deg_d \tau(\xi) := \deg_\bh \tau(\xi + d\bh),
$$ 
where $\deg_\bh$ means the $\bh$-degree of the polynomial $\tau(\xi + d\bh)$.

\begin{Lemma} \label{Lemma-asymp}
Let $\tau(\xi) \in H_\sT^* (\pt) \otimes_\bC H^*_\sK(\pt)^\sW$. 
\begin{enumerate}[label=\arabic*)]
\setlength{\parskip}{1ex}

\item If $\deg_d \tau(\xi) \leq  \la \det \sN, d \ra$ for any $d \in \Eff(X^\ab)$, then the Laurent expansion of $ I^{(\tau(\xi))}_{X} (Q; \bh) $ lies in $H^* (X) [\![\bh^{-1}]\!] [\![Q]\!]$, i.e. the limit
$
\lim_{\bh \to \infty} I^{(\tau(\xi))}_{X} ( Q; \bh) 
$
exists. 

\item If $\deg_d \tau(\xi) <  \la \det \sN, d \ra$ for any $d \in \Eff(X^\ab)$, then
$$
\lim_{\bh \to \infty} I^{(\tau(\xi))}_{X} ( Q; \bh) = \tau(\xi) |_X. 
$$
\end{enumerate}

\begin{proof}
Recall that we have an explicit formula for the $I$-function with insertion $\tau(\xi) \in H^*_\sK (\pt)^\sW$, obtained by \eqref{eqn-explicit-I} and Prop. ~\ref{Prop-abelianization-I}:
\begin{eqnarray}
 I^{(\tau(\xi))}_X (Q; \bh) &=& \sum_{d\in \Eff(X^\ab)} Q^{\bar d} \cdot \tau(\xi + d\bh) |_X \cdot \prod_{i=1}^n \frac{\prod_{l=0}^\infty (u_i + c_1 (L_{\lambda_i}) - l\bh) 
}{
\prod_{l=0}^\infty (u_i + c_1 (L_{\lambda_i})  + \la \lambda_i, d \ra \bh - l \bh)
}     \nonumber 	\\
&& \cdot \prod_{\alpha \in \Phi}  
\frac{\prod_{l=0}^\infty (c_1 (L_\alpha) + \la \alpha, d \ra \bh - l\bh) 
}{
\prod_{l=0}^\infty (c_1 (L_\alpha )   - l \bh)
}
\end{eqnarray}
As $\bh \to \infty$, the $Q^{\bar d}$-coefficient of the $I$-function has $\bh$-degree
$$
\deg_d \tau(\xi) -  \sum_{i=1}^n \la \lambda_i, d \ra = \deg_d \tau (\xi) -  \la \det \sN, d \ra. 
$$
Hence 1). 
For 2), when the inequality is strict, only the constant term $\tau(\xi)|_X$ survives under the limit $\bh \to \infty$. 
\end{proof}

\end{Lemma}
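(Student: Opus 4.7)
The approach is to unwind the explicit formula for $I^{(\tau(\xi))}_X(Q;\bh)$ and perform a degree count in $\bh$ on each $Q^{\bar d}$-coefficient. Combining Prop.~\ref{Prop-abelianization-I} with the toric formula~\eqref{eqn-explicit-I}, each $Q^{\bar d}$-coefficient (summed over $d \in \Eff(X^\ab)$ mapping to $\bar d$) is a product of three pieces: the insertion $\tau(\xi + d\bh)|_X$; a matter factor coming from $\sN = \bigoplus \bC_{u_i + \lambda_i}$; and a root factor $\gamma(\xi + d\bh;\bh)/\gamma(\xi;\bh)|_{X^\ab}$ coming from the abelianization prefactor.

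The first step is to compute the $\bh$-degree of each piece. By definition, $\deg_\bh \tau(\xi+d\bh) = \deg_d \tau(\xi)$. The matter ratio indexed by $i$ simplifies, after cancelling the infinite tails, to either $1/\prod_{m=1}^{\la\lambda_i,d\ra}(u_i + c_1(L_{\lambda_i}) + m\bh)$ or $\prod_{l=0}^{-\la\lambda_i,d\ra-1}(u_i + c_1(L_{\lambda_i}) - l\bh)$ depending on the sign of $\la\lambda_i,d\ra$; in either case its $\bh$-degree is $-\la\lambda_i,d\ra$. Summing over $i$ gives $-\la\det\sN,d\ra$. The root ratio has $\bh$-degree $\la\alpha,d\ra$ for each $\alpha \in \Phi$, and because $\Phi$ is stable under $\alpha \mapsto -\alpha$, these contributions cancel pairwise to give total degree $0$. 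Altogether, the $Q^{\bar d}$-coefficient has $\bh$-degree exactly $\deg_d \tau(\xi) - \la\det\sN,d\ra$.

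Given this, part~(1) is immediate: the hypothesis ensures every $Q^{\bar d}$-coefficient has non-positive $\bh$-degree, so the Laurent expansion at $\bh = \infty$ is a power series in $\bh^{-1}$, placing $I^{(\tau(\xi))}_X(Q;\bh)$ in $H^*_\sT(X)[\![\bh^{-1}]\!][\![Q]\!]$. For part~(2), the strict inequality forces every $Q^{\bar d}$-term with $d \neq 0$ to vanish as $\bh \to \infty$; only the $d = 0$ contribution survives, and there the matter and root ratios collapse to $1$ while the insertion reduces to $\tau(\xi)|_X$, yielding the claimed identity.

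The one point that requires a bit of care is verifying that applying $(\pi^*)^{-1} \circ j^*$ from Prop.~\ref{Prop-abelianization-I} does not interfere with the $\bh$-degree count; this is immediate because these maps are $\bh$-independent and the degree analysis can be carried out entirely upstairs in $H^*_{\sK \times \sT}(\pt)$ before restriction. Accordingly, no serious obstacle is expected: the lemma reduces to a careful bookkeeping of degrees in the abelianized $I$-function.
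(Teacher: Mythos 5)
Your proposal is correct and follows essentially the same route as the paper: expand the abelianized $I$-function via Prop.~\ref{Prop-abelianization-I} and \eqref{eqn-explicit-I}, and bound the $\bh$-degree of each $Q^{\bar d}$-coefficient by $\deg_d \tau(\xi) - \la \det\sN, d\ra$ (your explicit note that the root factors contribute degree $0$ by the $\alpha \mapsto -\alpha$ symmetry is a detail the paper leaves implicit, but it is the same computation). No gaps.
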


\begin{Proposition} \label{Prop-rigidity}
Let $\tau(\xi) \in H_\sT^* (\pt) \otimes_\bC H^*_\sK(\pt)^\sW$. 
\begin{enumerate}[label=\arabic*)]
\setlength{\parskip}{1ex}

\item If $\deg_d \tau(\xi) \leq  \la \det \sN, d \ra$ for any $d \in \Eff(X^\ab)$, then the capped $I$-functions with insertions coincide with the quantum tautological classes, i.e.
$$
\widehat I^{(\tau (\xi) )} (Q; \bh) = \widehat{\tau (\xi)} (Q) .
$$

\item If $\deg_d \tau(\xi) <  \la \det \sN , d \ra$ for any $d \in \Eff(X^\ab)$, then both of them are ``rigid", i.e.
$$
\widehat I^{(\tau (\xi) )} (Q; \bh) = \widehat{\tau (\xi)} (Q) = \tau(\xi) |_X .
$$

\end{enumerate}

\end{Proposition}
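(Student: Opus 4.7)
The plan is to reduce both parts of the proposition to the identity $\widehat I^{(\tau(\xi))}(Q;\bh) = \Psi(Q;\bh)\, I^{(\tau(\xi))}(Q;\bh)$ from Lemma~\ref{Lemma-Psi}~1), and then to exploit the asymptotic behavior of each factor as $\bh\to\infty$ provided by Lemma~\ref{Lemma-Psi}~2) and Lemma~\ref{Lemma-asymp}. The whole argument is very short once one unpacks what kind of $\bh$-dependence each side has.

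For part~1), I would first observe that by its very construction as a pushforward to the non-localized equivariant cohomology $H^*_{\sT\times \bC^*_\bh}(X)[\![Q]\!]$, each $Q^\beta$-coefficient of $\widehat I^{(\tau(\xi))}(Q;\bh)$ is a polynomial in $\bh$ with values in $H^*_\sT(X)$. On the other hand, under the hypothesis $\deg_d \tau(\xi) \leq \langle \det\sN,d\rangle$, Lemma~\ref{Lemma-asymp}~1) places $I^{(\tau(\xi))}(Q;\bh)$ in $H^*_\sT(X)[\![\bh^{-1}]\!][\![Q]\!]$, and Lemma~\ref{Lemma-Psi}~2) gives the corresponding statement for $\Psi(Q;\bh)$. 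Multiplying, the product $\Psi\cdot I$ also lies in $H^*_\sT(X)[\![\bh^{-1}]\!][\![Q]\!]$. Hence each $Q^\beta$-coefficient of $\widehat I^{(\tau(\xi))}(Q;\bh)$ is simultaneously a polynomial in $\bh$ and a power series in $\bh^{-1}$; inside the Laurent series ring $H^*_\sT(X)(\!(\bh^{-1})\!)$, this forces it to be constant in $\bh$. Setting $\bh = 0$ then gives $\widehat I^{(\tau(\xi))}(Q;\bh) = \widehat I^{(\tau(\xi))}(Q;\bh)|_{\bh=0} = \widehat{\tau(\xi)}(Q)$.

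For part~2), the strict inequality triggers Lemma~\ref{Lemma-asymp}~2), which says $\lim_{\bh\to\infty} I^{(\tau(\xi))}(Q;\bh) = \tau(\xi)|_X$. Combined with $\lim_{\bh\to\infty} \Psi(Q;\bh) = \Id$ from Lemma~\ref{Lemma-Psi}~2), this yields $\lim_{\bh\to\infty} \widehat I^{(\tau(\xi))}(Q;\bh) = \tau(\xi)|_X$. But part~1) already shows $\widehat I^{(\tau(\xi))}(Q;\bh)$ is constant in $\bh$, so this limit is the value of $\widehat I^{(\tau(\xi))}(Q;\bh)$ itself, and we conclude $\widehat I^{(\tau(\xi))}(Q;\bh) = \widehat{\tau(\xi)}(Q) = \tau(\xi)|_X$.

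The only possible subtlety, which I do not expect to be a real obstacle, is the formal algebra step asserting that an element of $H^*_\sT(X)(\!(\bh^{-1})\!)[\![Q]\!]$ which is simultaneously polynomial in $\bh$ and a series in $\bh^{-1}$ must be constant in $\bh$. This is the trivial identity $H^*_\sT(X)[\bh]\cap H^*_\sT(X)[\![\bh^{-1}]\!] = H^*_\sT(X)$ inside $H^*_\sT(X)(\!(\bh^{-1})\!)$, applied coefficient-wise in $Q$. Everything else is a direct invocation of previously established results.
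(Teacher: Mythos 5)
Your proof is correct and follows essentially the same route as the paper: both use $\widehat I^{(\tau(\xi))}(Q;\bh)=\Psi(Q;\bh)\,I^{(\tau(\xi))}(Q;\bh)$, note that the left side is polynomial in $\bh$ while the right side is bounded as $\bh\to\infty$ by Lemma~\ref{Lemma-Psi}~2) and Lemma~\ref{Lemma-asymp}, and conclude $\bh$-independence before evaluating at $\bh=0$ (resp.\ $\bh\to\infty$ for part~2). Your explicit remark that $H^*_\sT(X)[\bh]\cap H^*_\sT(X)[\![\bh^{-1}]\!]=H^*_\sT(X)$ just spells out the step the paper phrases as ``the limit exists and hence must be independent of $\bh$.''
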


\begin{proof}
1): Recall Lemma ~\ref{Lemma-Psi} that 
$$
\widehat I^{(\tau(\xi))} (Q; \bh) = \Psi (Q; \bh) I^{(\tau (\xi))} (Q; \bh), 
$$
whose LHS lies in the $\bC^*_\bh$-non-localized cohomology $H^*_{\sT \times \bC^*_\bh} (X) [\![Q]\!]$, while both factors of the RHS a priori lie in the localized cohomology $H^*_{\sT \times \bC^*_\bh} (X)_\loc [\![Q]\!]$. 
However, Lemma ~\ref{Lemma-Psi} 2) and Lemma \ref{Lemma-asymp} 1) imply that the RHS is bounded as $\bh \to \infty$. 
Therefore, $\lim_{\bh \to \infty} \widehat I^{(\tau(\xi))} (Q; \bh)$ exists and hence must be independent of $\bh$. 
It is then equal to the specialization at $\bh=0$, i.e. $\widehat{\tau (\xi)} (Q)$. 

2): Being independent of $\bh$, one can also compute $\widehat I^{(\tau(\xi))} (Q; \bh)$ by letting $\bh \to \infty$, which is $\tau(\xi) |_X$ according to Lemma ~\ref{Lemma-Psi} 2) and Lemma ~\ref{Lemma-asymp} 2).
\end{proof}

\vspace{4ex}

\section{Quasimap quantum cohomology of quiver varieties}

\subsection{Quiver varieties} \label{Sec-quiver}

A \emph{quiver} is a finite oriented graph, denoted by $\bfQ = (\bfQ_0 = \bfI \sqcup \bfF, \bfQ_1)$, satisfying the following.
\begin{itemize}
\setlength{\parskip}{1ex}

\item $\bfQ_0 = \bfI \sqcup \bfF$ is the set of nodes, where $\bfI$ is the set of \emph{gauge nodes}, and $\bfF$ is the set of \emph{frozen nodes}.

\item  $\bfQ_1$ is the set of edges, with \emph{source and target} maps $s, t: \bfQ_1 \to \bfQ_0$. 

\item There are no self loops or oriented 2-cycles.
\end{itemize}

A \emph{dimension vector} is a vector $\bv = (\bv_i)_{i\in \bfQ_0} \in \bZ^{\bfQ_0}$. 
Given a dimension vector, one can form a vector space of quiver representations
$$
\sN := \bigoplus_{e\in \bfQ_1} \Hom (V_{s(e)}, V_{t(e)}), 
$$
where $V_i \cong \bC^{\bv_i}$ for $i\in \bfQ_0$. 
The \emph{gauge group} and \emph{flavor group}
$$
\sG := \prod_{i\in \bfI} GL (\bv_i), \qquad \sG_F := \prod_{i\in \bfF} GL (\bv_i)
$$
act naturally on $\sN$. 
More precisely, for a point $B = (B_e)_{e\in \bfQ_1} \in \sN$, an element $g = (g_i)_{i\in \bfQ_0} \in \sG \times \sG_F$ acts by
$$
( g \cdot B )_e = g_{t(e)} B_e g_{s(e)}^{-1} .
$$
A \emph{stability condition} is a vector $\theta = (\theta_i)_{i\in \bfI} \in \bZ^{\bfI}$, which determines a character
$$
\chi_\theta: \sG \to \bC^*, \qquad g = (g_i)_{i\in \bfI} \mapsto \prod_{i\in \bfI}  ( \det g_i )^{\theta_i}. 
$$
The GIT stability is equivalent\footnote{This also requires the trick of Crawley-Boevey \cite{CB} which relates framed and unframed quivers.} to the stability for quiver representations \cite{Kin}, where it is more convenient to take $\theta \in \bR^{\bfI}$. 
For generic choices of $\theta$, one has $\sN^{\sG\text{-ss}} = \sN^{\sG\text{-s}}$. 
Moreover, the $\sG$-action on $\sN^{\sG\text{-s}}$ is free, by \cite[Lemma ~1.3.2]{Gin}.

Let $\sT \subset \sG_F$ be the maximal torus, i.e. $\sT = \prod_{i\in \bfF} (\bC^*)^{\bv_i}$. 
Then $\sT$ is a flavor torus acting on $\sN$, which commutes with $\sG$.

\begin{Definition}
Given a quiver $\bfQ = (\bfQ_0 = \bfI \sqcup \bfF, \bfQ_1)$, with dimension vector $\bv$ and stability condition $\theta$, the \emph{quiver variety} is defined as the GIT quotient
$$
X := \sN /\!\!/_\theta \sG = \sN^{\sG\text{-s}} / \sG.
$$
\end{Definition}

\begin{Lemma} \label{Lemma-stab}
Fix a gauge node $k \in \bfI$, and let $(B_e, V_i)_{e\in \bfQ_1, i\in \bfQ_0}$ be in $\sN^{\sG\text{-s}}$. 

\begin{enumerate}[label=\arabic*)]
\setlength{\parskip}{1ex}

\item If $\theta_k >0$, then the map
$$
\sum_{e \in \bfQ_1,  \, t(e) = k  } B_e: \  \bigoplus_{e \in \bfQ_1, \, t(e) = k} V_{s(e)} \to V_k
$$
is surjective. 

\item If $\theta_k <0$, then the map
$$
\bigoplus_{e \in \bfQ_1,  \, s(e) = k  } B_e: \ V_k \to \bigoplus_{e \in \bfQ_1 , \, s(e) = k  } V_{t(e)}
$$
is injective. 

\end{enumerate}

\end{Lemma}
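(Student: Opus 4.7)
The plan is to prove both parts contrapositively via the Hilbert--Mumford criterion applied to the linearization $L_{\chi_\theta}$: if the stated surjectivity (resp.\ injectivity) fails at the node $k$, I will exhibit an explicit one-parameter subgroup $\lambda: \bC^* \to \sG$ concentrated at the $k$-th $GL$-factor such that $\lim_{t \to 0} \lambda(t) \cdot B$ exists and the weight $\la \chi_\theta, \lambda \ra$ is strictly negative, contradicting $\theta$-stability of $B$.

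For part 1), starting from a proper subspace $W_k := \im (\Phi) \subsetneq V_k$ where $\Phi := \sum_{e: t(e) = k} B_e$, I would choose a splitting $V_k = W_k \oplus C_k$ with $C_k \neq 0$ and let $\lambda$ act as $1$ on $W_k$, as $t^{-1}$ on $C_k$, and trivially on all $V_i$ with $i \neq k$. The key observation is that incoming edges at $k$ already land in $W_k$ by definition of $W_k$, and are therefore fixed by $\lambda$, while outgoing edges at $k$ only acquire a factor of $t$ on the $C_k$-component of their source and hence remain finite in the limit $t \to 0$. The weight computation $\chi_\theta(\lambda(t)) = t^{-\theta_k \dim C_k}$ then gives $\la \chi_\theta, \lambda \ra = - \theta_k \dim C_k < 0$. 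Part 2) is symmetric: take $W_k := \ker \Psi$ with $\Psi := \bigoplus_{e: s(e) = k} B_e$, and use the cocharacter that scales $W_k$ by $t$ (trivial on a chosen complement $C_k$ and on all $V_i$ with $i \neq k$). For outgoing edges at $k$, the composition $B_e \circ \lambda(t)^{-1}$ would naively acquire a $t^{-1}$ on $W_k$, but that component is annihilated by $W_k \subseteq \ker B_e$, so the limit still exists; incoming edges tolerate $\lambda(t)$ trivially. The resulting pairing is $\theta_k \dim W_k < 0$.

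The main obstacle I anticipate is bookkeeping the sign conventions correctly: one needs to verify that, with the linearization $L_{\chi_\theta}$ defined from the character $\chi_\theta$ in the setup, the Hilbert--Mumford numerical invariant is $\mu^{L_{\chi_\theta}}(B, \lambda) = \la \chi_\theta, \lambda \ra$, and that $\theta$-stability of $B$ requires this quantity to be strictly positive on every nontrivial $\lambda$ with limit. I would pin down the sign once and for all by sanity-checking against the Grassmannian $\Gr(k, n) = \Hom(\bC^n, \bC^k) /\!\!/_{\theta = 1} GL(k)$, where positivity of $\theta$ is known to be equivalent to surjectivity of the framing map. Once the convention is fixed, both destabilizations above yield the claim directly, without any further geometric work.
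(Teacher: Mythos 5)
Your proof is correct, and the destabilizing object you produce is the same one the paper uses: in part 1) the subspace $\im\Phi\subsetneq V_k$, in part 2) the subspace $\ker\Psi\neq 0$. The difference is the language in which destabilization is certified. The paper first applies the Crawley--Boevey trick to absorb the frozen nodes into an extra node $\infty$ (so that $\theta\cdot\bv=0$), and then invokes King's criterion: the subrepresentation obtained by replacing $V_k$ with $\im\Phi$ violates the inequality $\theta\cdot\dim(V/V'')\le 0$ required of quotients. You instead stay at the level of GIT and exhibit a one-parameter subgroup $\lambda$ of $\sG$, supported on the $k$-th $GL$-factor, for which $\lim_{t\to 0}\lambda(t)\cdot B$ exists while $\la\chi_\theta,\lambda\ra<0$. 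Your verifications are right: incoming edges at $k$ land in $\im\Phi$ and are fixed by your $\lambda$, outgoing edges pick up only non-negative powers of $t$ (and in part 2) the would-be $t^{-1}$ is killed because $\ker\Psi\subseteq\ker B_e$ for every outgoing $e$), and the weight computations $-\theta_k\dim C_k<0$ and $\theta_k\dim W_k<0$ are correct. Your route buys two small things: you need only the easy direction of the Hilbert--Mumford criterion for a character linearization (if $f(gx)=\chi(g)^nf(x)$ and $f(x)\neq 0$, then $\la\chi,\lambda\ra\ge 0$ whenever the limit exists), and you never need the Crawley--Boevey reduction, since frozen nodes are simply carried along as $\lambda$-fixed spaces. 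The cost is exactly the sign bookkeeping you flag; your proposal to pin it down against $\Gr(k,n)=\Hom(\bC^n,\bC^k)/\!\!/_{\theta=1}GL(k)$ is sound and, once done, closes the argument. One minor wording point: what your strict inequality $\la\chi_\theta,\lambda\ra<0$ contradicts is $\theta$-\emph{semi}stability, which is implied by the stability hypothesis, so no appeal to the generic stabilizer is needed.
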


\begin{proof}
We only prove 1); the proof of 2) is similar.
Applying the construction of Crawley-Boevey ~\cite{CB}, we may replace all frozen nodes by arrows from/to an extra node $\infty$.
Therefore, we may assume that $\bfF = \emptyset$ and $\theta \cdot \bv = 0$. 
Suppose that the map $B := \sum_{e \in \bfQ_1,  \, t(e) = k  } B_e$ fails to be surjective. 
Then $\im B \neq V_k$. 
Denote by $V''$ the quiver representation which is $\im B$ at the node $k$, and $V_i$ for $i\neq k$. 
Then $V''$ is a subrepresentation of $V$, with $\theta \cdot \dim (V / V'') =  \theta_k \cdot \dim \im B >0 = \theta \cdot \bv$; a contradiction to the $\theta$-stability of $V$.
\end{proof}

Given our definition of $\sG$, we have 
$$
\sK = \prod_{k \in \bfI} (\bC^*)^{\bv_k }, \qquad \sX_* (\sK) = \prod_{k \in \bfI} \bZ^{\bv_k }, \qquad \pi_1 (\sG) =  \bZ^\bfI, \qquad \sW = \prod_{k\in \bfI} S_{\bv_k}.
$$ 
An element of $\sX_* (\sK)$ is a tuple of integers
$$
d = (d_j^{(k )})_{k \in \bfI, \, 1\leq j\leq \bv_k}, 
$$
and the map of degrees is given by
$$
\sX_* (\sK) \to \pi_1 (\sG), \qquad d \mapsto \bar d = (\bar d^{(k )})_{k \in \bfI}, \qquad \text{where} \quad \bar d^{(k )} :=  \sum_{j=1}^{\bv_k } d_j^{(k )} .
$$
Let $e_j^{(k)}$, $\xi_j^{(k)}$, $k\in \bfI$, $1\leq j\leq \bv_k$ be the standard bases of $\sX_*(\sK)$ and $\sX^*(\sK)$ respectively. 
In other words, $d = \sum_{k\in \bfI} \sum_{j=1}^{\bv_k} d_j^{(k)} e_j^{(k)}$. 

It is clear that the following coweights lie in the abelian effective cone
$$
\sign (\theta_k) \cdot e_j^{(k), \vee} \in \Eff(X^\ab), \qquad k \in \bfI, \ 1\leq j\leq \bv_k,
$$ 
where we denote $\sign (\theta_k) := 1$ (resp. $-1$), if $\theta_k >0$ (resp. $\theta_k <0$).

We denote the abelian and nonabelian K\"ahler parameters by (note that $\overline{e^{(k)}_1} = \overline{e^{(k)}_j}$ for any $j$)
$$
\widetilde Q_j^{(k)} := \widetilde Q^{e_j^{(k)}}, \qquad Q^{(k)} := Q^{\overline{e^{(k)}_1 }}
$$
and the specialization map $\widetilde Q \mapsto Q$ is given by
$$
\widetilde Q_j^{(k)} \mapsto Q^{(k)}.  
$$
Similarly, denote the equivariant parameters for $\sT$ by $u_{j'}^{(k')}$, for those $k' \in \bfF$ and $1\leq j' \leq \bv_{k'}$.

The decomposition of $\sN$ as a $(\sK \times \sT)$-representation is
$$
\sN = \sum_{e\in \bfQ_1} \sum_{a=1}^{\bv_{s(e)}} \sum_{b =1}^{\bv_{t(e)}} \bC_{\xi^{(t(e))}_b - \xi^{(s(e))}_a } ,
$$
where we mean $\xi_{j'}^{(k')} = u_{j'}^{(k')}$, for those $k'\in \bfF$.

\subsection{Quantum multiplication relations among Chern classes}\label{sec:Chernclassmult}

To save notations, in this subsection, we fix a node $k\in \bfI$ and denote 
$$
V := V_k, \qquad \bv := \bv_k, \qquad \widetilde Q_j := \widetilde Q_j^{(k)}, \qquad Q := Q^{(k)}, \qquad \xi_j := \xi_j^{(k)},
$$
$$
V_- := \bigoplus_{e\in \bfQ_1, \, t(e) = k} V_{s(e)}, \qquad V_+ := \bigoplus_{e\in \bfQ_1, \, s(e) = k} V_{t(e)}, \qquad \bv_\pm := \dim V_\pm , 
$$
$$
\mu := \{ \mu_a \mid 1 \leq a \leq \bv_- \} := \{ \xi^{(s(e))}_a \mid e \in \bfQ_1, \, t(e) = k, \, 1\leq a\leq \bv_{s(e)} \}, \quad \text{i.e. } V_- = \sum_{a=1}^{\bv_-} \bC_{\mu_a}
$$
$$
\nu := \{ \nu_b \mid 1 \leq b \leq \bv_+ \} := \{ \xi^{(t(e))}_b \mid e \in \bfQ_1, \, s(e) = k, \, 1\leq b\leq \bv_{t(e)} \}, \quad \text{i.e. } V_+ = \sum_{b=1}^{\bv_+} \bC_{\nu_b}. 
$$
Again, we set $\xi_{j'}^{(k')} = u_{j'}^{(k')}$ if $k' \in \bfF$. 

Let $t$ be a formal variable, and we consider the class
$$
c_t (V) := \prod_{j=1}^{\bv} (t + \xi_j) = \sum_{i=1}^\bv t^i e_i (\xi),  
$$
and similarly $c_t(V_\pm)$. 
We will derive a ``quantum division" relation among $c_t(V)$ and $c_t(V_\pm)$. 

\begin{Remark}
A priori, each $t$-coefficient of $c_t (V)$, i.e. $e_i (\xi)$, gives an element in $H_\sT^*(\pt) \otimes_\bC H_\sK^*(\pt)^\sW$. 
In the following, we will denote its image in $\QH(X) = H_\sT^*(\pt) \otimes_\bC H_\sK^*(\pt)^\sW[\![Q]\!] / \cJ$ also by $e_i (V)$. 
Note that in this definition, all multiplications involved in $e_i(\xi)$ are quantum multiplications.
One should not confuse this with the tautological class $e_i (\cV)$ (which is $e_i (\xi) |_X$ for us) in $H^*_\sT (X)$. 
An alternative way to understand this distinction is via the quantum tautological map \eqref{eqn-quantum-taut} -- in the presentation $(H^*_\sT (X) [\![Q]\!], *)$, the classes here are $\widehat{e_i (\xi)} (Q)$. 
\end{Remark}

According to Thm. ~\ref{Thm-QH}, relations of the quasimap quantum cohomology $\QH(X)$ are of the form ~\eqref{eqn-QH-rel}, obtained by alternating summation of \emph{abelian relations} in $\QH(X^\ab)$.
It suffices to consider $d = \sign (\theta_k) \cdot e_1^{(k)}$, for all $k\in \bfI$. 

We first concentrate on the case when $\theta_k >0$. 
The abelian relation in \eqref{eqn-QH-rel-ab} corresponding to $d = e_1^{(k)}$ is 
\begin{equation} \label{eqn-rel-k}
 \prod_{a=1}^{\bv_-} (\xi_1 - \mu_a ) = \widetilde Q_1 \cdot  \prod_{b=1}^{\bv_+} (\nu_b - \xi_1 ) .
\end{equation}
Expand it in $\xi_1$:
$$
\sum_{m=0}^{\bv_-}  (-1)^{\bv_- - m} e_{\bv_- - m} (\mu) \cdot \xi_1^m = \widetilde Q_1 \cdot \sum_{m=0}^{\bv_+} (-1)^m e_{\bv_+ - m} (\nu) \cdot  \xi_j^m, 
$$
where $e_i (x_1, \cdots, x_N) = \sum_{1\leq k_1 < \cdots < k_i \leq N} x_{k_1} \cdots x_{k_i}$ is the $i$-th elementary symmetric function.

Multiply it with
$$
g(\xi) = \xi_1^p \xi_2^{\bv -2} \cdots \xi_\bv^0, \qquad p \geq 0,
$$
and apply the specialization 
$$
\widetilde Q_1 \ \mapsto \ Q_\sharp = (-1)^{\bv-1} Q, 
$$ 
together with the anti-symmetrization by $\sW$.
Note that in the Weyl group $\sW = \prod_{i \in \bfI} S_{\bv_i}$, only the $k$-th component $S_{\bv} = S_{\bv_k}$ acts nontrivially. 
We obtain in $\QH(X)$:
\ben
&& \sum_{m=0}^{\bv_-} (-1)^{\bv_- -m} e_{\bv_- -m} (\mu) \cdot \frac{
\sum_{\sigma \in S_\bv}  \sign (\sigma) \cdot \xi_{\sigma(1)}^{m+p} \cdots \xi_{\sigma(\bv)}^{0}
}{
\sum_{\sigma \in S_\bv}  \sign (\sigma) \cdot \xi_{\sigma(1)}^{\bv-1} \cdots \xi_{\sigma(\bv)}^{0}
}  \\
&=& 
(-1)^{\bv-1} Q \cdot \sum_{m=0}^{\bv_+} (-1)^m  e_{\bv_+ - m} (\nu) \cdot 
\frac{
\sum_{\sigma \in S_\bv}  \sign (\sigma) \cdot \xi_{\sigma(1)}^{m+p} \cdots \xi_{\sigma(\bv)}^{0}
}{
\sum_{\sigma \in S_\bv}  \sign (\sigma) \cdot \xi_{\sigma(1)}^{\bv-1} \cdots \xi_{\sigma(\bv)}^{0}
} 
\een
i.e.
\begin{equation} \label{eqn rel after anti-sym}
 \sum_{m=0}^{\bv_-} (-1)^{\bv_- -m} e_{\bv_- -m} (\mu) \cdot h_{m+p-\bv+1} (\xi) = (-1)^{\bv-1} Q \cdot \sum_{m=0}^{\bv_+} (-1)^m  e_{\bv_+ - m} (\nu) \cdot h_{m+p-\bv+1} (\xi), 
\end{equation}
where $h_i (x_1, \cdots, x_N) = \sum_{1\leq k_1 \leq \cdots \leq k_i \leq N} x_{k_1} \cdots x_{k_i}$ is the $i$-th complete symmetric function, and we set $h_i = 0$ for $i<0$. 

\begin{Remark}
When $\theta_k <0$, the abelian relation in \eqref{eqn-QH-rel-ab} corresponding to $d = -e_1^{(k)}$ happens to be \eqref{eqn-rel-k} with $\widetilde Q_1$ formally moved to the other side:
$$
 \prod_{a=1}^{\bv_-} (\xi_1 - \mu_a ) = \widetilde Q_1 \cdot  \prod_{b=1}^{\bv_+} (\nu_b - \xi_1 ) .
$$
The same arguments hold and \eqref{eqn rel after anti-sym} becomes
\begin{equation} \label{eqn rel after anti-sym - neg}
(-1)^{\bv-1} Q^{-1} \cdot  \sum_{m=0}^{\bv_-} (-1)^{\bv_- -m} e_{\bv_- -m} (\mu) \cdot h_{m+p-\bv+1} (\xi) =  \sum_{m=0}^{\bv_+} (-1)^m  e_{\bv_+ - m} (\nu) \cdot h_{m+p-\bv+1} (\xi)\,. 
\end{equation}
\end{Remark}

For any Laurent series $f(t) = \sum_{n=-\infty}^N a_n t^n \in H_\sK^*(\pt)^\sW (\!(t^{-1} )\!)$, we denote its positive and negative truncations by
\[
[f(t)]_+ := \sum_{n=0}^{N} a_n t^n, \qquad [f(t)]_- := \sum_{n=-\infty}^{-1} a_n t^n, \qquad \text{if } N \geq 0, 
\]
and if $N<0$, we simply let $[f(t)]_+ = 0$ and $[f(t)]_- = f(t)$.

\begin{Definition}\label{eqn:chernquot}
For any $U = \bC_{v_1} + \cdots + \bC_{v_r}$ and $U' = \bC_{v'_1} + \cdots + \bC_{v'_s}$, introduce the \emph{truncated Chern quotient} :
\ben
\delta_t (U ,  U') &:=& \Big[ \frac{c_t (U)}{c_t (U')} \Big]_+ \\
&=& \left\{ \begin{aligned}
& t^{r - s } \sum_{p=0}^{r - s} (-t)^{-p} \sum_{m=0}^{p} (-1)^{m} e_{m}(v) h_{p-m} (v' ) \quad \in \quad H_\sT^*(\pt) \otimes_\bC H_\sK^*(\pt)^\sW [t^{\pm 1}] , && \quad r\geq s \\
& 0 , && \quad r<s.
\end{aligned} \right.
\een
\end{Definition}

\begin{Theorem} \label{Thm-c_t-rel}
The following equations hold in $\QH_\sT (X) [t]$.
\begin{enumerate}[label=\arabic*)]
\setlength{\parskip}{1ex}

\item If $\theta_k >0$, then $\Big[ \dfrac{c_t (V_-)}{c_t (V)} \Big]_- = (-1)^{\bv_- -\bv+1} Q \cdot \Big[ \dfrac{c_t (V_+)}{c_t (V)} \Big]_-$, or equivalently
$$
c_t ( V_- )  - \delta_t (V_- , V) \cdot c_t (V)  =  (-1)^{\bv_- -\bv+1} Q \cdot \left( c_t ( V_+ )
- \delta_t (V_+ ,  V)  \cdot c_t (V) \right). 
$$
In particular, the classical limit of $\delta_t (V_- , V)$ in $H^*_\sT (X)$ is
$$
\delta_t (V_- , V) \big|_{Q=0} = c_t (V_- ) c_t ( V)^{-1} |_X = c_t (\cV_- / \cV) , 
$$
where $\cV = V|_X$ and $\cV_- = V_- |_X$ are the tautological vector bundles on $X$.

\item If $\theta_k <0$, then $(-1)^{\bv_- -\bv+1} Q^{-1} \cdot \Big[ \dfrac{c_t (V_-)}{c_t (V)} \Big]_- =  \Big[ \dfrac{c_t (V_+)}{c_t (V)} \Big]_-$, or equivalently
$$
(-1)^{\bv_- -\bv+1} Q^{-1} \cdot \left(  c_t ( V_- )  - \delta_t (V_- , V) \cdot c_t (V) \right)  =  c_t ( V_+ )
- \delta_t (V_+ , V)  \cdot c_t (V) . 
$$
In particular, the classical limit of $\delta_t (V_+ , V)$ in $H^*_\sT (X)$ is
$$
\delta_t (V_+ , V) \big|_{Q=\infty} = c_t (V_+ ) c_t ( V)^{-1} |_X = c_t (\cV_+ / \cV) .
$$

\end{enumerate}

\end{Theorem}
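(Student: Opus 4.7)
The proof separates into two parts: (i) establishing the Laurent-series identity between the negative truncations of $c_t(V_\pm)/c_t(V)$, and (ii) computing the classical limit at $Q = 0$ (or $Q = \infty$ in case 2). The two forms stated in part 1) are immediately equivalent: by definition of $\delta_t$, one has $c_t(V_\pm)/c_t(V) = \delta_t(V_\pm,V) + [c_t(V_\pm)/c_t(V)]_-$, and multiplying through by $c_t(V)$ converts the Laurent identity into the polynomial one; conversely, since $c_t(V)$ is monic of degree $\bv$ in $t$ (hence a non-zero-divisor in $\QH_\sT(X)[t^{\pm 1}]$), the polynomial identity recovers the truncated one upon formal division. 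So it suffices to prove the truncated identity.

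For (i) with $\theta_k > 0$, I would match $t^{-p-1}$ coefficients for each $p \geq 0$. Expanding $c_t(V)^{-1} = t^{-\bv}\sum_{i \geq 0}(-1)^i h_i(\xi) t^{-i}$ and $c_t(V_-) = \sum_m e_{\bv_- - m}(\mu)t^m$, the $t^{-p-1}$ coefficient of $c_t(V_-)/c_t(V)$ becomes, after reindexing $k = \bv_- - m$, a global sign $(-1)^{\bv_- - \bv + p + 1}$ times $\sum_m (-1)^{\bv_- - m} e_{\bv_- - m}(\mu) h_{m+p-\bv+1}(\xi)$. This is exactly the LHS of the antisymmetrized abelian relation~\eqref{eqn rel after anti-sym}, which the discussion preceding the theorem has already established in $\QH_\sT(X)$ by applying Thm.~\ref{Thm-QH} with $d = e_1^{(k)}$, multiplying by $g(\xi) = \xi_1^p \xi_2^{\bv-2}\cdots\xi_\bv^0$, and using the bialternant/hook Schur formula. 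The same reindexing applied to the RHS of~\eqref{eqn rel after anti-sym} yields (up to an identical global sign, since $2p$ and $2\bv_-$ are even) the $t^{-p-1}$ coefficient of $(-1)^{\bv_- - \bv + 1} Q \cdot c_t(V_+)/c_t(V)$. Since~\eqref{eqn rel after anti-sym} holds for every $p \geq 0$, the Laurent identity follows. The case $\theta_k < 0$ is identical using~\eqref{eqn rel after anti-sym - neg}.

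For (ii), I would specialize the polynomial form at $Q = 0$: the right side vanishes, leaving $c_t(\cV_-) = \delta_t(V_-,V)|_{Q=0} \cdot c_t(\cV)$ in $H^*_\sT(X)[t]$. Since $c_t(\cV)$ is monic, hence a non-zero-divisor in $H^*_\sT(X)[t]$, one obtains $\delta_t(V_-,V)|_{Q=0} = c_t(\cV_-) c_t(\cV)^{-1}$. By Lemma~\ref{Lemma-stab}(1), the hypothesis $\theta_k > 0$ forces the induced tautological map $\cV_- \to \cV$ on $X$ to be a surjection of vector bundles; the resulting short exact sequence factors $c_t(\cV_-) = c_t(\cV_-/\cV) \cdot c_t(\cV)$, identifying $\delta_t(V_-,V)|_{Q=0}$ with $c_t(\cV_-/\cV)$. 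The case $\theta_k < 0$ is dual, using injectivity from Lemma~\ref{Lemma-stab}(2) together with the limit $Q \to \infty$ (now the $Q^{-1}$-factor on the left kills the left side, and one divides again by $c_t(\cV)$).

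The main obstacle is the sign bookkeeping in step (i): one must track simultaneously the sign from the K\"ahler shift $\widetilde Q_1 \mapsto Q_\sharp = (-1)^{\bv-1} Q$, the $(-1)^i$ coming from the expansion of $c_t(V)^{-1}$ as a Laurent series in $t^{-1}$, the parity of $\bv_-$ appearing in $e_{\bv_- - m}(\mu)$, and the $p$-dependence, all of which have to conspire to produce the clean exponent $(-1)^{\bv_- - \bv + 1}$ independent of $p$. Conceptually, however, the argument is routine: the key non-trivial input (namely the quantum relation~\eqref{eqn rel after anti-sym}) is already in place before the theorem statement, and the remaining content is simply repackaging these relations as a single Laurent-series identity and performing the classical limit.
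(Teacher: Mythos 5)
Your proposal is correct and follows essentially the same route as the paper: both arguments feed the antisymmetrized abelian relation~\eqref{eqn rel after anti-sym} (resp.~\eqref{eqn rel after anti-sym - neg}) into the Laurent expansion of $c_t(V_\pm)\cdot c_t(V)^{-1}$ in $t^{-1}$ and then read off the classical limit at $Q=0$ (resp.\ $Q=\infty$) via the surjectivity/injectivity from Lemma~\ref{Lemma-stab}. The only cosmetic difference is that you match individual $t^{-p-1}$-coefficients while the paper splits and reindexes the whole series at once (which incidentally lets your version handle the $\bv_+<\bv$ case uniformly, a point the paper relegates to a footnote).
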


\begin{proof}
1): We apply \eqref{eqn rel after anti-sym} to compute the following quantum division class in $\QH_\sT (X) [t^{\pm 1}]$.
Note that classes like $c_t (V)$ are all invertible, and we have $\bv_- \geq \bv$ by Lemma ~\ref{Lemma-stab}. 
\ben
c_t ( V_- ) \cdot c_t (V)^{-1}
&=& 
 \prod_{a=1}^{\bv_-} (t + \mu_a ) 
\cdot \prod_{j=1}^{\bv} (t + \xi_j )^{-1}  \\
&=& 
t^{\bv_-} \prod_{a=1}^{\bv_-} (1+t^{-1} \mu_a) \cdot 
t^{-\bv} \prod_{j=1}^\bv (1 + t^{-1} \xi_j )^{-1}  \\
&=&  t^{\bv_- - \bv} \left( \sum_{m=0}^{\bv_-}  e_{m} (\mu) t^{-m} \right) \left( \sum_{m' =0}^\infty h_{m'} (\xi) (-t)^{-m'} \right) \\
&=& t^{\bv_- - \bv} \sum_{p=0}^\infty (-t)^{-p} \sum_{\substack{m+m' = p \\ 0\leq m\leq \bv_-, \, m' \geq 0}} (-1)^{m} e_{m}(\mu) h_{m'} (\xi) \\
&=&  t^{\bv_- - \bv } \sum_{p=0}^\infty (-t)^{-p} \sum_{m=0}^{\bv_-} (-1)^{m} e_{m}(\mu) h_{p-m} (\xi) .
\een
Since $h_N (\xi) = 0$ for all $N<0$, the summation $\sum_{m=0}^{\bv_-}$ is actually $\sum_{m=0}^{\min\{\bv_-, p\} }$; however, for convenience of later computations, we prefer to keep the upper bound $\bv_-$. 
Spitting the summation $\sum_{p=0}^\infty = \sum_{p=0}^{\bv_- - \bv} + \sum_{p=\bv_- - \bv +1}^\infty$, we then have
\begin{eqnarray}
 c_t ( V_- )
\cdot
 c_t (V)^{-1} 
&=& t^{\bv_- - \bv } \sum_{p=0}^{\bv_- - \bv} (-t)^{-p} \sum_{m=0}^{\bv_-} (-1)^{m} e_{m}(\mu) h_{p-m} (\xi )   \nonumber \\
&&
+ 
t^{\bv_- - \bv } \sum_{p=\bv_- -\bv +1}^\infty (-t)^{-p} \sum_{m=0}^{\bv_-} (-1)^{\bv_- - m} e_{\bv_- - m}(\mu) h_{p-\bv_- +m} (\xi)      \nonumber \\
&=& \delta_t (V_- , V) + t^{\bv_- - \bv } \sum_{p=0}^\infty (-t)^{-p-\bv_- + \bv -1 } \sum_{m=0}^{\bv_-} (-1)^{\bv_- - m} e_{\bv_- - m}(\mu) h_{p -\bv +1 +m} (\xi)      \nonumber    \\
&=& \delta_t (V_- , V)  + (-1)^{\bv_- - \bv +1} t^{-1 } \sum_{p=0}^\infty (-t)^{-p } \sum_{m=0}^{\bv_-} (-1)^{\bv_- - m} e_{\bv_- - m}(\mu) h_{p -\bv +1 +m} (\xi)     \label{eqn-V_-}.
\end{eqnarray}
Applying \eqref{eqn rel after anti-sym} to the second term,  we obtain
\begin{equation} \label{eqn-quotient-V_-}
 c_t ( V_- )
\cdot
 c_t (V)^{-1} = \delta_t (V_- , V)  + (-1)^{\bv_-  } Q \cdot \left[ t^{ - 1 }   \sum_{p=0}^\infty (-t)^{-p}   \sum_{m=0}^{\bv_+} (-1)^m e_{\bv_+ - m} (\nu) h_{p - \bv +1 +m} (\xi) \right]. 
\end{equation}
On the other hand, the computations above until \eqref{eqn-V_-} still hold\footnote{One has to treat the case $\bv_+ < \bv$ separately.} if we replace $V_-$ with $V_+$. 
Therefore, 
$$
 c_t ( V_+ )
\cdot
 c_t (V)^{-1} = \delta_t (V_+ , V)   + (-1)^{\bv_+ - \bv +1} \left[ t^{-1 } \sum_{p=0}^\infty (-t)^{-p } \sum_{m=0}^{\bv_+} (-1)^{\bv_+ - m} e_{\bv_+ - m}(\mu) h_{p -\bv +1 +m} (\xi)   \right] .
$$
Comparing with \eqref{eqn-quotient-V_-}, we have
$$
 c_t ( V_- )
\cdot
 c_t (V)^{-1} - \delta_t (V_- , V)   = (-1)^{\bv_- - \bv+1} Q \cdot \left(   c_t ( V_+ )
\cdot c_t (V)^{-1} - \delta_t (V_+ , V)   \right).
$$
Multiplying by $c_t(V)$ on both sides proves 1). 
The classical limit can be obtained by setting $Q=0$.

2) can be proved in a completely parallel manner, except that we apply \eqref{eqn rel after anti-sym - neg} instead of ~\eqref{eqn rel after anti-sym}.
\end{proof}

\begin{Remark}
The similar computation already appears in \cite{GK} for the usual quantum cohomology of quiver flag varieties, which we will review in \S \ref{Sec-GW-QH}.
\end{Remark}

\vspace{4ex}

\section{Cluster algebras and quasimap $\QH$}\label{sec:clusterqcoh}

\subsection{Seeds and mutation rules}

We recall the basic definitions of cluster algebras \cite{FZ-1, FZ-2, BFZ-3, FZ-4}. 
We mainly follow the convention in \cite{FZ-4}, adapted to the quiver case. 

 A \emph{semifield} is an abelian multiplicative group, with multiplication $\cdot$, together with an addition $\oplus$, which is commutative, associative, and distributive with respect to $\cdot$. 

As a special example, let $J$ be a finite set, the \emph{tropical semifield} $\bP := \Trop ( v_j , \, j\in J)$ is the free abelian group generated multiplicatively by independent variables $v_j$, $j\in J$, whose addition $\oplus$ is defined as
$$
\prod_{j\in J} v_j^{a_j} \oplus \prod_{j\in J} v_j^{ b_j} := \prod_{j\in J} v_j^{ \min \{ a_j, b_j\}} . 
$$

A \emph{labeled seed} is a triple $\bs = (\bx, \by, B)$, where $\bx = (x_1, \cdots, x_n)$ is an $n$-tuple of independent variables, $\by = (y_1, \cdots, y_n)$ is an $n$-tuple of elements in $\bP$, and $B$ is a skew-symmetric\footnote{In general, $B$ can be allowed to be skew-symmetrizable or even sign-skew-symmetric. See ~\cite{BFZ-3}.} $n\times n$ matrix.

\begin{Definition}
Given a labeled seed $(\bx, \by, B)$, and $k\in \{1, \cdots, n\}$, the \emph{seed mutation} $\mu_k$ in the direction $k$ transforms $(\bx, \by, B)$ into another labeled seed $\mu_k (\bx, \by, B) = (\bx', \by', B')$, defined as follows.
\begin{itemize}
\setlength{\parskip}{1ex}

\item $B' = (b'_{ij})$, where
$$
b'_{ij} = \left\{
\begin{aligned}
& -b_{ij} , && \qquad i = k \text{ or } j = k \\
& b_{ij} + b_{ik} b_{kj} , && \qquad b_{ik}, b_{kj} >0 \\
& b_{ij} - b_{ik} b_{kj} , && \qquad b_{ik}, b_{kj} < 0  \\
& b_{ij} , && \qquad \text{otherwise}
\end{aligned}
\right.
$$

\item The coefficients $\by' = (y'_1, \cdots, y'_n)$ are
$$
y'_j = \left\{
\begin{aligned}
& y_k^{-1} , && \qquad j=k \\
& y_j (1 \oplus y_k^{-1})^{-b_{kj}} , && \qquad b_{kj} >0 \\
& y_j (y_k \oplus 1)^{-b_{kj}} , && \qquad b_{kj} < 0  \\
\end{aligned}
\right.
$$

\item The cluster $\bx' = (x'_1, \cdots, x'_n)$ is given by $x'_j = x_j$ for $j\neq k$, and $x'_k \in \cF$ is determined by the \emph{exchange relation}
\begin{equation} \label{eqn-adj-ex-rel}
x_k x'_k = \frac{y_k}{y_k \oplus 1} \prod_{i: \, b_{ik} >0} x_i^{b_{ik}} + \frac{1}{y_k \oplus 1} \prod_{i: \, b_{ik} <0} x_i^{-b_{ik}} \,.
\end{equation}

\end{itemize}
\end{Definition}

Let $\bZ\bP$ be the group algebra of $\bP$. 
Denote by $\cF = \bZ\bP (\bx):=  \bZ\bP (x_1, \cdots, x_n)$ the field of rational functions in $x_1, \cdots, x_n$ with coefficients in $\bZ\bP$, and by $\bZ\bP [\bx^{\pm 1}] := \bZ\bP [x_1^{\pm 1}, \cdots, x_n^{\pm 1}]$ the ring of Laurent polynomials. 

\subsection{Cluster algebras}

In our case, we will consider cluster algebras arising from quivers. 
Let $\bfQ$ be a quiver as in \S \ref{Sec-quiver}, where we label the gauge nodes as $\bfI = \{1, \cdots, n\}$. 

From now on, we fix an \emph{initial seed} $\bs_0 = (\bx, \by, B)$ as follows.
\begin{itemize}
\setlength{\parskip}{1ex}

\item $
\bx = (x_1, \cdots, x_n)$ are the \emph{initial cluster variables}, and $\by = (y_1, \cdots, y_n )
$ are the \emph{initial coefficients}. 

\item $B$ is  the $n\times n$ adjacency matrix for the gauge nodes
$$
B = ( b_{ij})_{1\leq i, j\leq n}, \qquad b_{ij} :=  \sharp \{ \text{arrows } i \to j \} - \sharp \{ \text{arrows } j \to i \},
$$
which is automatically skew-symmetric. 
\end{itemize}

 Given another labeled seed $\bs = \mu_{k_1} \cdots \mu_{k_l} (\bs_0)$ obtained from $\bs_0$ by a sequence of mutations, the \emph{cluster variables}  of $\bs$ are 
$$
x_{k; \bs} = \mu_{k_1} \cdots \mu_{k_l} (x_k), \qquad 1\leq k\leq n, 
$$
which lie in $\cF$.
Similarly, we have $y_{k; \bs}$, and $B^\bs = (b^\bs_{ij})$. 
Note that we will always reserve the letters $x_k$ for the initial cluster variables, and refer to a general cluster variable as $x_{k; \bs}$. 

\begin{Definition} \label{Defn-cluster}
The \emph{cluster algebra} $\cA$ is the subalgebra in $\cF$, generated by all cluster variables $x_{k; \bs}$, for all $1\leq k\leq n$, and all possible $\bs$ as above.
\end{Definition}

Alternatively, if we label the frozen nodes of $\bfQ$ as $\bfF = \{n+1, \cdots, n+m\}$, with $m\geq 0$, the cluster algebra may be described in terms of the entire $m \times n$ adjacency matrix 
$$
\widetilde B = (b_{ij})_{1\leq i\leq m, \, 1\leq j\leq n}, \qquad b_{ij} :=  \sharp \{ \text{arrows } i \to j \} - \sharp \{ \text{arrows } j \to i \}.
$$
The mutation rule for $\widetilde B$ is the same as $B$.

Let $m:= n + |J|$, and write $x_{n+j} := v_j$. 
The tropical field and the coefficients may be rewritten as 
$$
\bP = \Trop (x_{n+1}, \cdots, x_m), \qquad y_{k; \bs} = \prod_{i=n+1}^m x_i^{b_{ik}^\bs}. 
$$
The mutation rules for cluster variables can then be rewritten as 
\begin{equation} \label{eqn-mutation-2}
x_{k; \bs} x_{k; \bs'} = \prod_{i>n, \, b_{i k}^\bs >0} x_{i}^{b^\bs_{i k}}    \prod_{i\leq n, \, b_{ik}^\bs >0} x_{i; \bs}^{b^\bs_{ik}} + \prod_{i>n, \, b^\bs_{i k} <0} x_{i}^{-b^\bs_{i k}} \prod_{i\leq n, \, b^\bs_{ik} <0} x_{i; \bs}^{-b^\bs_{ik}}. 
\end{equation}

In particular, the initial seed $\bs_0$ thus defined is of \emph{geometric type} and \emph{totally mutable}, in the sense of \cite[\S 1]{BFZ-3}.
In this case, the following result shows that $\cA$ can be characterized only via the \emph{adjacent} mutations. 
Recall that $x'_k := \mu_k (x_k)$. 

\begin{Proposition}[{\cite[Cor. ~1.21]{BFZ-3}}] \label{Prop-gen-by-adj}
Let $\cA$ be the cluster algebra defined above, and suppose that the quiver $\bfQ$ is oriented-acyclic, i.e. has no oriented cycles. 
Then:
\begin{itemize}
\setlength{\parskip}{1ex}
\item $\cA$ is generated by $x_1, x'_1, \cdots, x_n, x'_n$ over $\bZ\bP$.

\item The ideal of relations among the generators $x_1, x'_1, \cdots, x_n, x'_n$ is generated by the exchange relations \eqref{eqn-adj-ex-rel}.
\end{itemize}
\end{Proposition}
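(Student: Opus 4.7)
The plan is to follow the strategy of Berenstein--Fomin--Zelevinsky, in which the cluster algebra $\cA$ is compared with the \emph{upper cluster algebra}
$$
\overline{\cA} := \bigcap_{\bs} \bZ\bP[x_{1;\bs}^{\pm 1}, \dots, x_{n;\bs}^{\pm 1}],
$$
where $\bs$ ranges over all seeds reachable from $\bs_0$ by finite mutation sequences. The first step is to establish the Laurent phenomenon, namely $\cA \subseteq \overline{\cA}$. This is proved by induction on the length of the mutation sequence, using the exchange relation \eqref{eqn-adj-ex-rel} and a careful tracking of the Laurent expansion of each new cluster variable $x'_k$ in the old cluster $\bx$.

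Next, I would reduce the infinite intersection defining $\overline{\cA}$ to a finite one, namely the intersection over the initial seed $\bs_0$ and its $n$ adjacent mutations $\mu_k(\bs_0)$. The key ingredient is a coprimality statement: the two binomial summands on the right-hand side of \eqref{eqn-adj-ex-rel} have no common factor in the Laurent polynomial ring obtained by inverting all $x_j$ for $j\neq k$. This forces an element simultaneously Laurent in $\bx$ and in $\mu_k(\bx)$ to admit a controlled expansion, and iterating over $k=1,\dots,n$ yields
$$
\overline{\cA} \;=\; \bZ\bP[\bx^{\pm 1}] \;\cap\; \bigcap_{k=1}^n \bZ\bP\bigl[x_1^{\pm 1}, \dots, \widehat{x_k^{\pm 1}}, \dots, x_n^{\pm 1}, (x'_k)^{\pm 1}\bigr].
$$

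The third and decisive step invokes the acyclicity hypothesis. Since $\bfQ$ has no oriented cycles on $\bfI$, I can relabel so that $b_{ij} \geq 0$ whenever $i < j$. With this triangular ordering, I would show that the ``standard monomials''
$$
x_1^{a_1}(x'_1)^{a'_1} \cdots x_n^{a_n}(x'_n)^{a'_n}, \qquad a_i, a'_i \geq 0, \quad a_i a'_i = 0,
$$
span $\overline{\cA}$ as a $\bZ\bP$-module. The argument is a straightening procedure: expand an element $f\in\overline{\cA}$ as a Laurent polynomial in $\bx$ and, processing $k = n, n-1, \dots, 1$ in order, use \eqref{eqn-adj-ex-rel} to trade negative powers of $x_k$ for nonnegative powers of $x'_k$ at the cost of introducing only terms involving $x_i, x'_i$ with $i>k$. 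Since those variables have already been straightened and the triangular ordering is preserved, the procedure terminates. This simultaneously proves that $x_1, x'_1, \dots, x_n, x'_n$ generate $\overline{\cA}$ (hence also $\cA$, giving $\cA = \overline{\cA}$) and that the only relations among them are the exchange relations, since any relation must reduce to $0$ under the straightening.

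The main obstacle is the third step, specifically verifying that the straightening procedure terminates and is well-defined. This is precisely where acyclicity is essential: in a cyclic quiver the substitution at step $k$ can reintroduce negative powers of some $x_j$ with $j<k$ that has already been cleared, producing an infinite regress. A secondary technical point is the coprimality assertion in the second step, which needs to be verified for the specific coefficient form $y_k = \prod_{i>n} x_i^{b_{ik}}$ built from the frozen variables; this amounts to checking that no frozen monomial divides both binomial summands in \eqref{eqn-adj-ex-rel}, which follows from the definition of the adjacency matrix.
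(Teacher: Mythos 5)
The paper does not actually prove this Proposition: it is quoted from Berenstein--Fomin--Zelevinsky \cite[Cor.~1.21]{BFZ-3}, so the only ``proof'' on offer is the citation. Your proposal is a reconstruction of the BFZ argument (Laurent phenomenon, reduction of the upper cluster algebra to the upper bound over the initial seed, and an acyclicity-based spanning argument for standard monomials), so the overall route is the intended one.

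There is, however, a genuine gap in your third step. Spanning of the standard monomials $x_1^{a_1}(x'_1)^{a'_1}\cdots x_n^{a_n}(x'_n)^{a'_n}$ (with $a_ia'_i=0$) only yields the first bullet (generation); it does \emph{not} yield the second bullet, that the ideal of relations is generated by the exchange relations. For that you need the standard monomials to be \emph{linearly independent} over $\bZ\bP$ inside $\cA$ --- this is \cite[Thm.~1.20]{BFZ-3}, proved by a separate leading-term argument (with respect to an acyclic ordering, distinct standard monomials have distinct leading Laurent monomials in the initial cluster), and it is precisely the second place where acyclicity is indispensable: for the Markov quiver the standard monomials still span the lower bound but are linearly dependent. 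Your sentence ``any relation must reduce to $0$ under the straightening'' conflates ``reduces to a $\bZ\bP$-combination of standard monomials'' with ``reduces to $0$'': without linear independence, a nonzero combination of standard monomials could vanish in $\cA$ without lying in the ideal generated by \eqref{eqn-adj-ex-rel}. Two secondary imprecisions: (i) in the straightening, replacing $c_{-d}x_k^{-d}$ by $(c_{-d}/P_k^{d})(x'_k)^{d}$, where $P_k$ is the exchange binomial, introduces Laurent monomials in \emph{all} $x_j$ with $j\neq k$, not only those with $j>k$, so the procedure is not triangular as stated --- BFZ instead induct on $n$ by splitting off a source of the quiver, which is where the acyclic ordering actually enters the spanning argument; (ii) the coprimality needed to identify the upper cluster algebra with the upper bound of the initial seed is \emph{pairwise} coprimality of the exchange polynomials $P_k$, $P_l$ for $k\neq l$, not just the absence of a common factor between the two summands of a single $P_k$.
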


One of the remarkable features for cluster algebras is the \emph{Laurent phenomenon}, which ganrantees that any cluster variable is actually a Laurent polynomial in the initial cluster variables, rather than simply a rational function.

\begin{Proposition}[Strong Laurent phenomenon {\cite[Prop. ~11.2]{FZ-2}}] \label{Prop-strong-Laurent}
Given the cluster algebra $\cA$ defined above, then for any $\bs$ obtained from $\bs_0$ via mutations, and any $1\leq k\leq n$, 
$$
x_{k; \bs} \in  \bZ [x_1^{\pm 1}, \cdots, x_n^{\pm 1}, x_{n+1}, \cdots, x_{n+m}].
$$
\end{Proposition}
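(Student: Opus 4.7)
This is the \emph{Strong Laurent Phenomenon} for cluster algebras of geometric type. Following Fomin--Zelevinsky, I would argue by induction on the length $N$ of the mutation sequence $\mu_{k_1}\cdots\mu_{k_N}$ sending $\bs_0$ to $\bs$. The base case $N=0$ is immediate, since $x_{k;\bs_0}=x_k$. For the inductive step, write $\bs=\mu_{k}(\bs'')$ with $\bs''$ at distance $N-1$, so that $x_{k;\bs}$ and $x_{k;\bs''}$ are related by the single-mutation exchange relation \eqref{eqn-mutation-2} at $\bs''$; all other cluster variables of $\bs$ coincide with those of $\bs''$, and by the inductive hypothesis they already lie in $R:=\bZ[x_1^{\pm 1},\ldots,x_n^{\pm 1},x_{n+1},\ldots,x_{n+m}]$.

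Solving the exchange relation for $x_{k;\bs}$ gives
$$x_{k;\bs}\;=\;\bigl(x_{k;\bs''}\bigr)^{-1}\bigl(M_+ + M_-\bigr),$$
where $M_\pm$ are monomials in the frozen variables $x_{n+1},\ldots,x_{n+m}$ and in the cluster variables $x_{i;\bs''}$ with $i\le n$, $i\ne k$, with non-negative exponents throughout. By induction, the sum $M_++M_-$ lies in $R$, and the whole task reduces to verifying that dividing by $x_{k;\bs''}\in R$ keeps us inside $R$, i.e.\ that $x_{k;\bs''}$ divides $M_++M_-$ in $R$.

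\textbf{Key step: a coprimality/Caterpillar argument.} The divisibility above I would establish by showing that $x_{k;\bs''}$ and $x_{k;\bs}$ are \emph{coprime} inside $R$ (once both are known to lie there), since then their product $x_{k;\bs''}\cdot x_{k;\bs}=M_++M_-$ forces $x_{k;\bs''}$ to divide the right-hand side. This coprimality is proved inductively by Fomin--Zelevinsky's \emph{Caterpillar Lemma}: one chooses an auxiliary sequence of intermediate seeds that approaches $\bs$ from $\bs_0$ while controlling, at each stage, the orders of poles of the relevant cluster variables in each initial variable $x_1,\ldots,x_n$. The inductive invariant to maintain along the caterpillar is that the two monomials appearing on the right-hand side of \eqref{eqn-mutation-2} are pairwise coprime as Laurent polynomials in the initial cluster, which reduces the single-step verification to a direct calculation on monomials. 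The ``strong'' feature of the statement---polynomial (rather than Laurent) dependence on the frozen variables $x_{n+1},\ldots,x_{n+m}$---is automatic from the fact that these variables only appear with non-negative exponents in every exchange relation \eqref{eqn-mutation-2}.

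\textbf{Main obstacle.} The real difficulty is the Caterpillar Lemma itself. A naive induction fails because one has no direct control on how the $x_k$-pole order of an intermediate cluster variable can grow with $N$: a single mutation can introduce new factors in the denominator that do not visibly cancel against anything when one continues mutating. The way around this is exactly the caterpillar construction, which interposes auxiliary mutations that periodically ``reset'' the denominator structure so that at each node of the auxiliary graph one is in a position to apply the single-mutation coprimality verification. Once that technical scaffolding is in place, the present statement follows as an immediate specialization to the quiver case and the initial seed $\bs_0$ of geometric type described in the paragraphs preceding Proposition~\ref{Prop-strong-Laurent}.
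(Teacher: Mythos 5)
The paper does not prove this statement at all: it is imported verbatim as a citation to Fomin--Zelevinsky \cite[Prop.~11.2]{FZ-2}, so there is no in-paper argument to compare yours against. Your outline is a faithful reconstruction of the standard proof of the cited result (induction on the mutation distance, reduction to a divisibility in the Laurent ring, and the Caterpillar Lemma to supply the needed coprimality), so as a proof \emph{of the classical theorem} the plan is sound, with one caveat: as literally phrased, the step ``show $x_{k;\bs''}$ and $x_{k;\bs}$ are coprime in $R$ once both are known to lie there'' is circular, since $x_{k;\bs}\in R$ is exactly the conclusion being sought; in Fomin--Zelevinsky the coprimality is instead asserted for exchange binomials and for cluster variables at intermediate caterpillar vertices that are already Laurent by a shorter induction, which is precisely the issue your ``main obstacle'' paragraph identifies. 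Likewise, polynomiality in the frozen variables $x_{n+1},\dots,x_{n+m}$ is not quite ``automatic'' from non-negative exponents in the exchange relations --- the division by $x_{k;\bs''}$ could a priori introduce denominators --- but it does come out of running the same caterpillar argument over the ground ring $\bZ[x_{n+1},\dots,x_{n+m}]$. For the purposes of this paper, simply citing \cite{FZ-2} as the authors do is the appropriate level of detail.
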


\subsection{From cluster to $\QH_\sT$}
Let $X$ be the quiver variety as in \S \ref{Sec-quiver}. 

Let $\zeta_1, \cdots, \zeta_{m}$ be another set of independent variables, and consider the map\footnote{Recall that $\bC[Q^{\pm 1}] := \bC[ (Q^{(1)} )^{\pm 1}, \cdots, ( Q^{(n)} )^{\pm 1} ]$, and we denote $\bC[\zeta^{\pm 1} ] := \bC [\zeta_1^{\pm 1}, \cdots, \zeta_{m}^{\pm 1}]$.}
$$
\bC[Q^{\pm 1}] \to \bC[\zeta^{\pm 1} ] , \qquad Q^{(k)} \mapsto (-1)^{\bv_k^- - \bv_k } \prod_{i=1}^{m} \zeta_i^{- b_{ik}} , \qquad 1\leq k\leq n.
$$
We will still denote the images of $Q^{(k)}$ by $Q^{(k)}$.
Let $\bC[Q, \zeta_{\leq n}^{\pm 1}, \zeta_{>n}]$ be the subalgebra in $\bC[\zeta^{\pm 1}]$ generated by $(Q^{(k)} )^{\sign(\theta_k)}$, $1\leq k\leq n$ and $\bC[\zeta_1^{\pm 1} , \cdots, \zeta_n^{\pm 1}, \zeta_{n+1}, \cdots, \zeta_m]$.

\begin{Theorem} \label{Thm-emb}
If the quiver $\bfQ$ is oriented-acyclic, i.e. does not have oriented cycles, then there is an injective map of algebras 
$$
\psi: \cA \hookrightarrow \QH_\sT^\poly (X) [t] \otimes_{\bC[Q]} \bC[ \zeta^{\pm 1} ], 
$$
such that 
$$
\psi (x_i) = \zeta_i \cdot c_t (V_i), \qquad 1\leq i\leq m, 
$$
$$
\psi (x'_k) = \zeta_k^{-1} \prod_{i: \, b_{ik} >0} \zeta_i^{ b_{ik}}  \cdot  \delta_t (V_k^- , V_k ) + \zeta_k^{-1} \prod_{i: \, b_{ik} <0} \zeta_i^{-b_{ik}} \cdot \delta_t (V_k^+ , V_k) , \qquad 1\leq k\leq n.
$$
The images of all cluster variables lie in the subalgebra $\QH_\sT^\poly (X) [t] \otimes_{\bC[Q]} \bC[ Q, \zeta_{\leq n}^{\pm 1}, \zeta_{>n} ]$.
\end{Theorem}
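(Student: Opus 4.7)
The approach is to apply Proposition~\ref{Prop-gen-by-adj}, which for an oriented-acyclic quiver presents $\cA$ by the adjacent generators $\{x_i\}_{i=1}^{n+m}$ and $\{x_k'\}_{k=1}^n$ with the ideal of relations generated by the adjacent exchange relations~\eqref{eqn-mutation-2} specialized to $\bs = \bs_0$. Thus the construction of $\psi$ reduces to three steps: (i) verify that the prescribed images satisfy the exchange relation $x_k x_k' = \prod_{i:\,b_{ik}>0} x_i^{b_{ik}} + \prod_{i:\,b_{ik}<0} x_i^{-b_{ik}}$ for every $1\leq k\leq n$; (ii) check that the images land in the prescribed subalgebra; (iii) prove injectivity.

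For (i), expand $\psi(x_k)\psi(x_k')$ directly from the stated formulae to get
\[
\prod_{i:\,b_{ik}>0} \zeta_i^{b_{ik}}\cdot c_t(V_k)\,\delta_t(V_k^-, V_k) \;+\; \prod_{i:\,b_{ik}<0} \zeta_i^{-b_{ik}}\cdot c_t(V_k)\,\delta_t(V_k^+, V_k).
\]
Since $\bfQ$ has no oriented $2$-cycles, $c_t(V_k^-) = \prod_{i:\,b_{ik}>0} c_t(V_i)^{b_{ik}}$ and $c_t(V_k^+) = \prod_{i:\,b_{ik}<0} c_t(V_i)^{-b_{ik}}$, so the right-hand side of the exchange relation becomes $\prod_{i:\,b_{ik}>0} \zeta_i^{b_{ik}} c_t(V_k^-) + \prod_{i:\,b_{ik}<0} \zeta_i^{-b_{ik}} c_t(V_k^+)$. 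Equality of the two sides is therefore equivalent to
\[
\prod_{i:\,b_{ik}>0} \zeta_i^{b_{ik}}\bigl[c_t(V_k^-) - c_t(V_k)\delta_t(V_k^-, V_k)\bigr] = -\prod_{i:\,b_{ik}<0} \zeta_i^{-b_{ik}}\bigl[c_t(V_k^+) - c_t(V_k)\delta_t(V_k^+, V_k)\bigr],
\]
which follows from Theorem~\ref{Thm-c_t-rel} upon substituting $Q^{(k)} = (-1)^{\bv_k^- - \bv_k}\prod_{i=1}^m \zeta_i^{-b_{ik}}$: the sign prefactor $(-1)^{\bv_k^- - \bv_k + 1}Q^{(k)}$ collapses to $-\prod_i \zeta_i^{-b_{ik}}$, and cross-multiplying by $\prod_{i:\,b_{ik}>0} \zeta_i^{b_{ik}}$ gives exactly the displayed identity. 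The same manipulation works in both the $\theta_k>0$ and $\theta_k<0$ cases of Theorem~\ref{Thm-c_t-rel}, so the exchange relation always holds under $\psi$.

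For (ii) and (iii), each generator $\psi(x_i)$ and $\psi(x_k')$ involves negative powers of $\zeta$ only among the $\zeta_{\leq n}$; by the strong Laurent phenomenon (Proposition~\ref{Prop-strong-Laurent}) every cluster variable is polynomial in $x_{>n}$, so the Laurent structure is confined to the $\zeta_{\leq n}$-direction and the images of all $x_{k;\bs}$ remain in $\QH_\sT^\poly(X)[t] \otimes_{\bC[Q]} \bC[Q, \zeta_{\leq n}^{\pm 1}, \zeta_{>n}]$. For injectivity, note that $c_t(V_i) = t^{\bv_i} + \cdots$ is a monic polynomial in $t$, hence a non-zero divisor in $\QH_\sT^\poly(X)[t]$; combining this with the algebraic independence of the $\zeta$-variables shows $\zeta_1 c_t(V_1), \dots, \zeta_{n+m} c_t(V_{n+m})$ are algebraically independent over $\bC$ in the target. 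The Laurent embedding $\cA \hookrightarrow \bZ[x_{\leq n}^{\pm 1}, x_{>n}]$ together with this independence forces $\ker\psi = 0$.

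The main obstacle lies in step (i): although it reduces cleanly to Theorem~\ref{Thm-c_t-rel}, one must track signs and $\zeta$-exponents carefully through the $Q^{(k)} \leftrightarrow \zeta$ substitution to see that the cluster exchange relation is precisely the image of the quantum Chern-quotient identity. A secondary technical concern is step (ii), where one must ensure that Laurent exponents on the source side translate correctly into the mixed structure of the target subalgebra, in which $\zeta_{\leq n}$ may be inverted but $c_t(V_i)$ may not.
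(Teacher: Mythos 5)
Your proposal is correct and follows essentially the same route as the paper: reduce to the adjacent exchange relations via Proposition~\ref{Prop-gen-by-adj} and the strong Laurent phenomenon, verify them by substituting $Q^{(k)} = (-1)^{\bv_k^- - \bv_k}\prod_i \zeta_i^{-b_{ik}}$ into Theorem~\ref{Thm-c_t-rel} and redistributing the $\zeta_i$'s, and prove injectivity from the leading term $\zeta_i t^{\bv_i}$ of $\psi(x_i)$ together with the algebraic independence of the $\zeta$-variables. The sign bookkeeping in your step (i) matches the paper's computation exactly.
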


\begin{proof}
By Prop. ~\ref{Prop-gen-by-adj} and the strong Laurent phenomenon (Prop. ~\ref{Prop-strong-Laurent}), to show the existence of $\psi$, it suffices to check that the images of $x_i$, $1\leq i\leq n$, and $x'_k$, $1 \leq k\leq n$, satisfy the exchange relations ~\eqref{eqn-mutation-2}, which for us is
$$
x_k x'_k =  \prod_{i: \, b_{ik} >0} x_i^{b_{ik}} + \prod_{i: \, b_{ik} <0} x_i^{-b_{ik}} , \qquad 1\leq k \leq n.
$$
After the base change to $\bC[\zeta^{\pm 1}]$, the exchange relations in Thm. ~\ref{Thm-c_t-rel} can be written in a unified form:
\begin{eqnarray}
&& c_t (V_k) \cdot \Big[  \delta_t (V_k^- , V_k ) + (-1)^{\bv_k^- - \bv_k }  Q^{(k)}  \cdot \delta_t (V_k^+ , V_k) \Big]  \nonumber \\
&=&  \prod_{e\in \bfQ_1, \, t(e) = k} c_t (V_{s(e)}) + (-1)^{\bv_k^- - \bv_k }  Q^{(k)}  \cdot \prod_{e\in \bfQ_1, \, s(e) = k} c_t (V_{t(e)})  \nonumber \\
&=&  \prod_{ i: \, b_{ik} >0 } c_t (V_i)^{b_{ik}} + \prod_{i=1}^{m} \zeta_i^{- b_{ik}} \cdot  \prod_{i: \, b_{ik}< 0} c_t (V_i)^{ - b_{ik}}   \label{eqn-ex-proof}
\end{eqnarray}
in $\QH_\sT (X)[t]$. 
The result follows from a redistribution of $\zeta_i$'s.
The last statement follows from Prop. ~\ref{Prop-strong-Laurent}.

It remains to prove the injectivity. 
Suppose that $f (x_1, \cdots, x_n) \in \ker \psi$, i.e. $f$ is a rational function in $x_k$'s, with coefficients in $\bZ\bP$, such that
$$
f( \zeta_1 \cdot c_t (V_1), \cdots, \zeta_n \cdot c_t (V_n)) = 0. 
$$
Possibly replacing $f$ by its numerator, we may assume that $f$ is a polynomial.
Now in $\QH_\sT^\poly (X) [t]$ we know that
$$
c_t (V_k) = c_t (\cV_k) + O(Q).
$$
Here $\cV_k := V_k |_X$ is the (classcial) tautological bundle. 
A further expansion in $t$ shows that
$$
f(\zeta_1 t^{\bv_1} \cdot (1 + O(Q, t^{-1})) , \cdots , \zeta_n t^{\bv_n} \cdot (1 + O(Q, t^{-1} )) ) = 0, 
$$
where each $O(Q, t^{-1})$ stands for a polynomial in $Q$ and $t^{-1}$ divisible by $Q t^{-1}$.
Since $\zeta_1, \cdots, \zeta_n$ are algebraically independent, we know that $\zeta_1 t^{\bv_1} \cdot (1 + O(Q, t^{-1})) , \cdots , \zeta_n t^{\bv_n} \cdot (1 + O(Q, t^{-1} ))$ are also algebraically independent.
This implies  $f=0$, and hence the injectivity.
\end{proof}

\begin{Remark} \label{Rk-inv}
We would like to ask whether it is possible to invert $\psi(x_k)$. 
The only subtle obstruction is that the inverse may no longer be a polynomial in $Q$, and hence the base change to $\zeta$-variables may be pathological. 
Therefore, we may consider the case where $\widetilde B$ is \emph{of full rank}. 
Then, the map $\bC[Q] \to \bC[\zeta^{\pm 1}]$ is injective, and it makes sense to define
$$
\bC[\zeta^{\pm 1}, Q]\!] := \text{completion of } \bC[\zeta^{\pm 1}] \text{ at } \fm_Q. 
$$
Now for a class $c_t (V_k)$, its inverse is well-defined in
$$
c_t (V_k)^{-1}  \in \QH_\sT (X)[t]_\loc :=  \QH_\sT (X) \otimes_{H_T^*(\pt)} \Frac(H_T^*(\pt) [t]).
$$
Note that $\QH_\sT (X)$ already contains the power series ring $\bC[\![Q]\!]$. 
The element $\psi(x_k^{-1})$ is then well-defined if we slightly enlarge the target as
$$
\psi (x_k^{-1}) \in \QH_\sT (X)[t]_\loc \otimes_{\bC[\![Q]\!]} \bC[\zeta^{\pm 1}, Q]\!].
$$
With this modification, in the same manner as Thm. ~\ref{Thm-emb}, one may define a map from the \emph{upper cluster algebra} $\cA^\up$ to $\QH_\sT (X)[t]_\loc \otimes_{\bC[\![Q]\!]} \bC[\zeta^{\pm 1}, Q]\!]$.
\end{Remark}

\subsection{$\bg$-vectors}

In this subsection, we assume that the initial seed $\bs_0$ is of \emph{principal coefficients}, i.e. $m=2n$, and the matrix $\widetilde B = \begin{pmatrix}
B \\
I_n
\end{pmatrix}$.
The initial coefficients are $(y_1, \cdots, y_n) = (v_1, \cdots, v_n) = (x_{n+1} , \cdots, x_{2n})$.

Furthermore, we take the \emph{negative} generic stability condition $\theta$, i.e. satisfying $\theta_k <0$, for all $1\leq k\leq n$. 
In particular,  
$$
\bC[Q^{\Eff(X)}] = \bC[(Q^{(1)} )^{-1}, \cdots, (Q^{(n)})^{-1} ] , \quad \text{with images} \quad ( Q^{(k)} )^{-1} \mapsto (-1)^{\bv_k^- - \bv_k } \zeta_{n+k} \prod_{i=1}^{n} \zeta_i^{ b_{ik}}.
$$

According to \cite[Prop. ~6.1]{FZ-4}, the cluster algebra $\cA$ is endowed with a $\bZ^n$-grading, if we set
$$
\deg x_k = \be_k, \qquad \deg y_k = - \bb_k := - \sum_{i=1}^n b_{ik} \be_i, \qquad 1\leq k \leq n.
$$
where $\be_i \in \bZ^n$ is the $i$-th standard basis vector, and $\bb_k$ is the $k$-th column of the matrix $B$.
Let $\bs$ be a labeled seed obtained from the initial seed $\bs_0$ by a sequence of mutations. 
The \emph{$\bg$-vectors} are defined as 
$$
\bg_{k; \bs} := \deg x_{k; \bs} \quad \in \quad \bZ^n, \qquad 1\leq k\leq n. 
$$ 
According to \cite[Prop. ~6.6]{FZ-4}, they can determined inductively by mutations.

Moreover, there is another set of variables 
$$
\hat y_k := y_k \prod_{i=1}^n x_i^{b_{ik}},  \qquad \text{satisfying } \deg \hat y_k = 0 , \qquad 1\leq k\leq n. 
$$
Since $\widetilde B$ is of full rank, by Rem. ~\ref{Rk-inv}, the images $\psi(\hat y_k)$ are well-defined if we slightly enlarge the target 
$$
\psi (\hat y_k) \in \QH_\sT (X)[t]_\loc \otimes_{\bC[\![Q^{\Eff(X) }]\!]} \bC[\zeta^{\pm 1}, Q^{\Eff(X) }]\!],
$$
where $\bC[\zeta^{\pm 1} , Q^{\Eff(X)}]\!]$ is the completion of $\bC[\zeta^{\pm 1}]$ with respect to $\fm_Q = ((Q^{(1)} )^{-1}, \cdots, (Q^{(n)})^{-1} )$.


\begin{Proposition} \label{Prop-g-vector}
Let the quiver $\bfQ$ be oriented-acyclic, such that $\widetilde B$ is of full rank.
For any $\bs$ as above, in the enlarged ring $ \QH_\sT (X) [t]_\loc \otimes_{\bC[\![Q^{\Eff(X)}]\!]}  \bC[\zeta^{\pm 1}, Q^{\Eff(X) }]\!]$, 
$$
\psi (x_{k; \bs}) = \prod_{i=1}^n (\zeta_i \cdot c_t (V_i) )^{g_i} \cdot (1+ O(Q^{\Eff(X)})) ,\qquad 1\leq k\leq n,
$$
where  $g_{k; \bs} = (g_1, \cdots, g_n) \in \bZ^n$, and $O(Q^{\Eff(X)})$ stands for an element in $\fm_Q \cdot \QH_\sT (X) [t]_\loc$. 
\end{Proposition}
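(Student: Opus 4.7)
The plan is to induct on the length of a mutation sequence connecting the initial seed $\bs_0$ to $\bs$. The base case $\bs=\bs_0$ is immediate: for each $1\le k\le n$ one has $x_{k;\bs_0}=x_k$ and $\bg_{k;\bs_0}=\be_k$, so $\psi(x_k)=\zeta_k\cdot c_t(V_k)$ matches the claim trivially.

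For the inductive step, assume the claim for $\bs$ and write $\bs'=\mu_l(\bs)$. Since $x_{k;\bs'}=x_{k;\bs}$ and $\bg_{k;\bs'}=\bg_{k;\bs}$ for $k\ne l$, only the direction $k=l$ requires analysis. The cluster exchange relation~\eqref{eqn-mutation-2} at $\bs$ gives
$$\psi(x_{l;\bs})\cdot \psi(x_{l;\bs'})=\psi(M_+)+\psi(M_-),$$
where $M_\pm$ are the two monomials on the right-hand side of~\eqref{eqn-mutation-2}. By the Fomin--Zelevinsky recursion \cite[Prop.~6.6]{FZ-4}, the vector $\bg_{l;\bs'}$ is $-\bg_{l;\bs}$ plus a non-negative combination of the $\bg_{i;\bs}$'s dictated by one of two sign regimes, the choice being governed by the sign of the $c$-vector $\bc_l^\bs$.

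The next step is to identify, among $\psi(M_+)$ and $\psi(M_-)$, the dominant one in the $\fm_Q$-adic filtration. By the inductive hypothesis each $\psi(x_{i;\bs})$, $1\le i\le n$, has leading term $\prod_j(\zeta_j c_t(V_j))^{g^{\bs}_{i,j}}$ modulo $\fm_Q$, and for frozen variables $\psi(x_{n+i})=\zeta_{n+i}c_t(V_{n+i})$ is already at leading order. The substitution $(Q^{(k)})^{-1}\mapsto (-1)^{\bv_k^- -\bv_k}\zeta_{n+k}\prod_i \zeta_i^{b_{ik}}$ is precisely the principal-coefficients tropicalization of the $\hat y$-variable; consequently, exactly one of $\psi(M_\pm)$ carries no $Q$-prefactor and realizes the leading term, while the other is divisible by a positive power of $(Q^{(k)})^{-1}$ for some $k$, hence lies in $\fm_Q\cdot\QH_\sT(X)[t]_\loc$. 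This dichotomy matches the sign regime in the $\bg$-vector recursion.

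Finally, in the enlarged ring $\QH_\sT(X)[t]_\loc\otimes_{\bC[\![Q^{\Eff(X)}]\!]}\bC[\zeta^{\pm 1},Q^{\Eff(X)}]\!]$ the class $\psi(x_{l;\bs})$ is invertible, because each $c_t(V_i)$ is a unit in $\QH_\sT(X)[t]_\loc$ (its leading $t$-coefficient is $1$) and $1+O(Q^{\Eff(X)})$ is a unit in the $\fm_Q$-adic completion. Dividing then yields $\psi(x_{l;\bs'})=\prod_i(\zeta_i c_t(V_i))^{(\bg_{l;\bs'})_i}\cdot(1+O(Q^{\Eff(X)}))$, as desired. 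The main obstacle will be verifying rigorously that the sign regime in the $\bg$-vector recursion is faithfully reflected by the $\fm_Q$-adic valuations of $\psi(M_+)$ and $\psi(M_-)$; this is essentially a bookkeeping exercise tracking how the identity $\psi(\hat y_k)\equiv\text{(monomial in }\zeta\text{'s)}\pmod{\fm_Q}$ transforms under iterated mutation, and should follow from the tropical compatibility of $y$-mutation with our substitution rule.
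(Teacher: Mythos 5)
Your overall strategy (induction on the length of the mutation sequence, dividing the exchange relation by $\psi(x_{l;\bs})$) is a genuinely different route from the paper, which instead applies the Fomin--Zelevinsky separation formula $x_{k;\bs} = \dfrac{F_{k;\bs}|_\cF(\hat y_1,\dots,\hat y_n)}{F_{k;\bs}|_\bP(y_1,\dots,y_n)}\,x_1^{g_1}\cdots x_n^{g_n}$ in one step: for principal coefficients the denominator is $1$, each $\psi(\hat y_k)$ lies in $(Q^{(k)})^{-1}\cdot\QH_\sT(X)[t]_\loc\subset\fm_Q$, and the $F$-polynomial has constant term $1$ by Derksen--Weyman--Zelevinsky, so the numerator evaluates to $1+O(Q^{\Eff(X)})$. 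However, your argument has a genuine gap exactly at the step you defer as ``essentially a bookkeeping exercise.'' The claim that exactly one of $\psi(M_+),\psi(M_-)$ carries no $Q$-prefactor while the other lies in $\fm_Q\cdot\QH_\sT(X)[t]_\loc$ amounts to the statement that $\psi(\hat y_{l;\bs})^{\pm 1}=\psi(M_+)/\psi(M_-)^{\pm1}$ has a definite sign of $\fm_Q$-adic valuation for every seed $\bs$; since $\hat y_{l;\bs}$ is a \emph{rational}, non-monomial expression in the initial $\hat y_i$'s, its leading $\fm_Q$-behaviour is governed by the $c$-vector $(c_{il}^\bs)_i$, and the required dichotomy is precisely sign-coherence of $c$-vectors. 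This is equivalent to the constant-term-$1$ property of $F$-polynomials ([FZ-4, Conj.~5.4/5.5]) and is a theorem of Derksen--Weyman--Zelevinsky, not a routine computation; without invoking it your induction does not close.

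A second, related omission: your final formula must contain no leftover monomial in the frozen classes $\zeta_{n+i}\,c_t(V_{n+i})$, yet each of $M_\pm$ contains the frozen variables to the powers $[\pm c_{il}^\bs]_+$. Showing that the \emph{dominant} monomial is the frozen-free one is again exactly the sign-coherence statement, and you do not address it; as written, your inductive step would produce $\prod_{i=1}^n(\zeta_i c_t(V_i))^{g_i}$ times an undetermined monomial in the frozen variables times $(1+O(Q^{\Eff(X)}))$. If you do import sign-coherence (say via [DWZ-2]), your induction can be completed and also recovers the $\bg$-vector recursion as a byproduct (homogeneity of the exchange relation gives $\bg_{l;\bs'}=\deg M_\pm-\bg_{l;\bs}$ automatically), but at that point it is both longer and no more elementary than the paper's direct appeal to the separation formula, which packages all of this bookkeeping into the single statement that $F_{k;\bs}$ has constant term $1$. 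The invertibility argument for $\psi(x_{l;\bs})$ in the localized, $\fm_Q$-adically completed ring is fine and matches the discussion in Rem.~\ref{Rk-inv}.
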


\begin{proof}
By \cite[Cor. ~6.3]{FZ-4}, we have
$$
x_{k; \bs} = 
\frac{ F_{k; \bs} |_\cF (\hat y_1, \cdots, \hat y_n)
}{ 
F_{k; \bs} |_\bP (y_1, \cdots, y_n) 
} 
x_1^{g_1} \cdots x_n^{g_n}, 
$$
where $F_{k; \bs} (y_1, \cdots, y_n) \in \bZ[y_1, \cdots, y_n]$ is the $F$-polynomial as in \cite[Def. ~3.3]{FZ-4}, and $F_{k; \bs} |_\cF$, $F_{k; \bs} |_\bP$ are its evaluations in the corresponding fields of coefficients. 
In particular, for principal coefficients, we have $F_{k; \bs} |_\bP (y_1, \cdots, y_n)$. 
Moreover, \cite[Conj. ~5.4]{FZ-4}, which is proved in \cite[Thm. ~1.7]{DWZ-2}, shows that $F_{k; \bs}$ has constant term $1$. 

Now, in our case, viewed in $ \QH_\sT (X) [t]_\loc \otimes_{\bC[\![Q^{\Eff(X)}]\!]}  \bC[\zeta^{\pm 1}, Q^{\Eff(X) }]\!]$, we have
$$
\psi (\hat y_k) = \zeta_{n+k} \prod_{i=1}^k \zeta_i^{b_{ik}} \cdot c_t (V_{n+k}) \prod_{i=1}^k c_t (V_i)^{b_{ik}} \quad  \in \quad ( Q^{(k)} )^{-1} \cdot \QH_\sT (X)[t]_\loc.
$$
The result follows.
\end{proof}

\subsection{Example: type $A_n$} \label{Sec-A_n}

Consider the $A_n$ quiver with one frozen node
\[
\xymatrix{
[\bv_0] \ar[r] & (\bv_1) \ar[r] & (\bv_2) \ar[r] & \cdots \ar[r] & (\bv_n) \\
}
\]
where the numbers are the dimensions, $(-)$ stands for gauge nodes, and $[-]$ stands for frozen nodes. 
We choose the stability condition and dimension vector as follows:
\begin{itemize}
\setlength{\parskip}{1ex}

\item $\theta_k>0$, for any $1\leq k\leq n$; 

\item $\bv_k > \bv_{k+1}$, for any $0\leq k\leq n-1$;

\item $\bv_0 - \bv_k < \bv_{k+1} - \bv_{k+2}$, for any $0 \leq k\leq n-1$.

\end{itemize}
Here we set $\bv_k = 0$ for $k>n$. 

The quiver variety $X$ is the partial flag variety $\mathrm{Fl} (\bv_n, \cdots, \bv_1, \bv_0)$. 
Note that since $\bv_k \geq \bv_{k+1}$, $V_k^- = V_{k-1}$, $V_k^+ = V_{k+1}$, and $\delta_t (V_{k+1}, V_k) = 0$.
Thm. ~\ref{Thm-c_t-rel} now reduces to
\begin{equation} \label{eqn-mutation-A-type}
c_t (V_k)   \cdot \delta_t (V_{k-1} , V_k)  =  c_t ( V_{k-1} )  + (-1)^{\bv_{k-1} - \bv_k  } Q^{(k)} \cdot   c_t ( V_{k+1} )
 . 
\end{equation}

\begin{Lemma} \label{Lemma-other-rel-A}
For $1\leq k\leq l \leq n$, we have
\begin{enumerate}[label=\arabic*)]
\setlength{\parskip}{1ex}
\item $\displaystyle
c_t (V_{l} ) \cdot \delta_t (V_{k-1} , V_{l} )  = c_t (V_{k-1} ) + (-1)^{\bv_{l-1} - \bv_l  } Q^{(l)}  \cdot \delta_t (V_{k-1} , V_{l-1} ) \cdot c_t (V_{l+1})$,

\item $\displaystyle 
\delta_t (V_{k-1} , V_{l} )  \cdot \delta_t (V_l, V_{l+1} )  = \delta_t (V_{k-1}, V_{l+1} ) + (-1)^{\bv_{l-1} - \bv_l } Q^{(l)}  \cdot \delta_t (V_{k-1} , V_{l-1} )
$ ,
\end{enumerate}
where we set $V_{n+1} = 0$.
\end{Lemma}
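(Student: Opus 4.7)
The plan is to prove 1) and 2) by simultaneous induction on $l \geq k$. The base case $l = k$ of statement 1) is the basic exchange relation from Theorem~\ref{Thm-c_t-rel}, since $\delta_t(V_{k-1}, V_{k-1}) = 1$. The inductive loop will run: 1) at $(k,l)$ implies 2) at $(k,l)$, and 1) together with 2) at $(k,l)$ implies 1) at $(k,l+1)$. The boundary case $l = n$ of 2) is consistent with the convention $V_{n+1} = 0$, since then $\delta_t(V_n, V_{n+1}) = c_t(V_n)$ and $\delta_t(V_{k-1}, V_{n+1}) = c_t(V_{k-1})$, which makes 2) at $(k,n)$ tautologically equivalent to 1) at $(k,n)$.

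For the first implication, write $\sigma_m := \delta_t(V_{k-1}, V_m)$, and split each factor of the Laurent-series identity
$$
\frac{c_t(V_{k-1})}{c_t(V_{l+1})} = \frac{c_t(V_{k-1})}{c_t(V_l)} \cdot \frac{c_t(V_l)}{c_t(V_{l+1})}
$$
in $\QH_\sT(X)(\!(t^{-1})\!)$ into its $[\cdot]_+$ and $[\cdot]_-$ parts. Applying $[\cdot]_+$ yields
$$
\sigma_{l+1} - \sigma_l \, \delta_t(V_l, V_{l+1}) = \bigl[\sigma_l \cdot [c_t(V_l)/c_t(V_{l+1})]_-\bigr]_+ + \bigl[[c_t(V_{k-1})/c_t(V_l)]_- \cdot \delta_t(V_l, V_{l+1})\bigr]_+.
$$
The basic exchange relation at node $l+1$ gives $[c_t(V_l)/c_t(V_{l+1})]_- \equiv -(-1)^{\bv_l-\bv_{l+1}} Q^{(l+1)} c_t(V_{l+2})/c_t(V_{l+1})$ modulo quantum relations, and 1) at $(k,l)$ similarly gives $[c_t(V_{k-1})/c_t(V_l)]_- \equiv -(-1)^{\bv_{l-1}-\bv_l} Q^{(l)} \sigma_{l-1} c_t(V_{l+1})/c_t(V_l)$. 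Substituting these, and using the basic relation at $l+1$ once more to evaluate $c_t(V_{l+1}) \delta_t(V_l, V_{l+1})/c_t(V_l) = 1 + (-1)^{\bv_l-\bv_{l+1}} Q^{(l+1)} c_t(V_{l+2})/c_t(V_l)$, one isolates the term $-(-1)^{\bv_{l-1}-\bv_l} Q^{(l)} \sigma_{l-1}$ plus two remainders of the shape $[\sigma_{l-1} c_t(V_{l+2})/c_t(V_l)]_+$ and $[\sigma_l c_t(V_{l+2})/c_t(V_{l+1})]_+$, each scaled by a $Q$-monomial; these vanish by the degree bound discussed below, which proves 2).

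For the second implication, multiply 2) at $(k,l)$ by $c_t(V_{l+1})$, use the basic exchange relation at $l+1$ to replace $\delta_t(V_l, V_{l+1}) c_t(V_{l+1})$ by $c_t(V_l) + (-1)^{\bv_l-\bv_{l+1}} Q^{(l+1)} c_t(V_{l+2})$, and use 1) at $(k,l)$ to replace $\sigma_l c_t(V_l)$ by $c_t(V_{k-1}) + (-1)^{\bv_{l-1}-\bv_l} Q^{(l)} \sigma_{l-1} c_t(V_{l+1})$. The two copies of $(-1)^{\bv_{l-1}-\bv_l} Q^{(l)} \sigma_{l-1} c_t(V_{l+1})$ cancel; since $c_t(V_{l+1})$ is monic in $t$ and hence a non-zero-divisor in $\QH_\sT(X)[t]$, dividing through yields 1) at $(k,l+1)$.

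The main obstacle is the degree bookkeeping in the first implication. I need to verify that the standing assumption $\bv_0 - \bv_l < \bv_{l+1} - \bv_{l+2}$ together with strict decrease of $\bv$ implies $\bv_{k-1} + \bv_{l+2} < \bv_{l-1} + \bv_l$; indeed, $\bv_{k-1} \leq \bv_0$ and the assumption give $\bv_{k-1} + \bv_{l+2} \leq \bv_0 + \bv_{l+2} < \bv_l + \bv_{l+1} < \bv_l + \bv_{l-1}$. This inequality is exactly what forces both truncations $[\sigma_{l-1} c_t(V_{l+2})/c_t(V_l)]_+$ and $[\sigma_l c_t(V_{l+2})/c_t(V_{l+1})]_+$ to vanish, leaving the clean right-hand side of 2).
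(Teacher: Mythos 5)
Your proof is correct and follows essentially the same route as the paper's: induction on $l-k$, splitting Laurent series in $t^{-1}$ into positive and negative truncations, rewriting the negative parts via the exchange relations of Thm.~\ref{Thm-c_t-rel}, and killing the cross terms with the dimension inequalities. The paper organizes the inductive loop slightly differently --- it divides statement 1) at $(k,l)$ by $c_t(V_{l+1})$ and reads off 2) at $(k,l)$ and 1) at $(k,l+1)$ as the positive and negative truncations of a single identity --- whereas you obtain 2) by splitting the factorization $c_t(V_{k-1})/c_t(V_{l+1}) = \big(c_t(V_{k-1})/c_t(V_l)\big)\cdot\big(c_t(V_l)/c_t(V_{l+1})\big)$ and then recover 1) at $(k,l+1)$ by a purely polynomial manipulation; these are the same computation in different packaging. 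One small point to make explicit: your identification $[c_t(V_{k-1})/c_t(V_l)]_- = -(-1)^{\bv_{l-1}-\bv_l}\,Q^{(l)}\,\delta_t(V_{k-1},V_{l-1})\,c_t(V_{l+1})/c_t(V_l)$ silently uses that the right-hand side contains only negative powers of $t$, i.e.\ $\bv_{k-1}+\bv_{l+1} < \bv_{l-1}+\bv_l$; this does follow from the standing assumption at index $l-1$ (namely $\bv_0-\bv_{l-1}<\bv_l-\bv_{l+1}$, combined with $\bv_{k-1}\leq\bv_0$), in exactly the same way as the two inequalities you do verify, but it should be stated, since without it the substitution would pick up an extra positive-part correction.
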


\begin{proof}
We prove by induction on $l-k$. 
For $l=k$, 1) is \eqref{eqn-mutation-A-type}. 
Now let us assume 1) for $l$ and prove 1) for $l+1$ and 2) for $l$. 
Divide both sides of the equation for $l$ by $c_t (V_{l+1})$. 
\begin{equation} \label{eqn-divide}
\frac{c_t (V_{l} )}{c_t (V_{l+1})} \cdot \delta_t (V_{k-1} , V_{l} )  = \frac{ c_t (V_{k-1} ) }{c_t (V_{l+1} ) } + (-1)^{\bv_{l-1} - \bv_l  } Q^{(l)}  \cdot \delta_t (V_{k-1} , V_{l-1} ) ,
\end{equation}
whose LHS is 
$$
\delta_t (V_l , V_{l+1}) \cdot \delta_t (V_{k-1} , V_{l} )  + \Big[ \frac{c_t (V_{l} )}{c_t (V_{l+1})} \Big]_- \cdot \delta_t (V_{k-1} , V_{l} ) .
$$
Thm. ~\ref{Thm-c_t-rel} now implies
$$
\Big[ \frac{c_t (V_{l} )}{c_t (V_{l+1})} \Big]_- \cdot \delta_t (V_{k-1} , V_{l} ) =  - (-1)^{\bv_l - \bv_{l+1} } Q^{(l+1)} \cdot \Big[ \dfrac{c_t (V_{l+2})}{c_t (V_{l+1})} \Big]_- \cdot \delta_t (V_{k-1} , V_{l} ) 
$$
Since $\bv_{l+2} - \bv_{l+1} + \bv_{k-1} - \bv_l \leq \bv_{l+2} - \bv_{l+1} + \bv_{0} - \bv_l  <0$, the above contains only negative powers of $t$. 
Therefore, the equation \eqref{eqn-divide} splits into positive and negative parts:
$$
\delta_t (V_l, V_{l+1} ) \cdot \delta_t (V_{k-1} , V_{l} )  = \delta_t (V_{k-1}, V_{l+1} ) + (-1)^{\bv_{l-1} - \bv_l } Q^{(l)}  \cdot \delta_t (V_{k-1} , V_{l-1} ) ,
$$
$$
 - (-1)^{\bv_l - \bv_{l+1} } Q^{(l+1)} \cdot \Big[ \dfrac{c_t (V_{l+2})}{c_t (V_{l+1})} \Big]_- \cdot \delta_t (V_{k-1} , V_{l} ) = \Big[ \frac{ c_t (V_{k-1} ) }{c_t (V_{l+1} ) } \Big]_-. 
$$
The positive part is exactly 2) for $l$. 
In view of $\delta_t (V_{l+2}, V_{l+1}) = 0$, the negative part is
$$
- (-1)^{\bv_l -\bv_{l+1}} Q^{(l+1)} \cdot  \dfrac{c_t (V_{l+2})}{c_t (V_{l+1})}  \cdot \delta_t (V_{k-1} , V_{l} ) = \frac{ c_t (V_{k-1} ) }{c_t (V_{l+1} ) } - \delta_t (V_{k-1}, V_{l+1} ). 
$$
which gives 1) for $l+1$. 
\end{proof}

Let us consider the cluster variables. 
The cluster algebra $\cA$ is the one of type $A_n$, with one coeffient coming from the frozen node. 
Various explicit models for this cluster algebra are well-known, e.g. in terms triangulations of an $(n+3)$-gon (\cite[Lemma ~5.3.1]{FWZ-45}), or minors in the reduced double Bruhat cell \cite[Example ~5.3.9]{FWZ-45}. 

In particular, the \emph{non-initial} cluster variables are parameterized by positive roots 
$$
\alpha_{kl} := \alpha_{k+1} + \cdots + \alpha_{l}, \qquad 0\leq k \leq l \leq n. 
$$
where $\alpha_i$'s are the simple roots for $A_n$. 
We will write the corresponding cluster variable by $x[ {\alpha_{kl}}]$, or simply by $x[kl]$. 
Note that $x[\alpha_k] = x[k-1, k]$. 
The initial cluster variables are written as $x_k = x[-\alpha_k]$.

\begin{Proposition} \label{Prop-cl-var-A}
Under the map $\psi$, images of all the cluster variables are
$$
\psi(x[-\alpha_k]) = \zeta_k \cdot c_t (V_k), \qquad \psi ( x[kl] ) = \zeta_k \zeta_l^{-1} \cdot \delta_t (V_k, V_l), \qquad 0\leq k \leq l \leq n. 
$$
\end{Proposition}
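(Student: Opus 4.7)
The plan is induction on the gap $l - k$, combining Lemma~\ref{Lemma-other-rel-A} 2) with the Ptolemy exchange relations of the type $A_n$ cluster algebra in its polygon-triangulation model.

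The base cases are immediate: for $l = k$, $\delta_t(V_k, V_k) = 1$ matches the convention $x[kk] = 1$, while for $l = k+1$ the variable $x[k, k+1]$ coincides with the adjacent mutation $x'_{k+1}$; applying Thm.~\ref{Thm-emb} to the $A_n$ quiver (where $b_{k, k+1} = 1$, $b_{k+2, k+1} = -1$, and all other entries in column $k+1$ of $\widetilde B$ vanish), together with the vanishing $\delta_t(V_{k+2}, V_{k+1}) = 0$ forced by $\bv_{k+2} < \bv_{k+1}$, yields $\psi(x'_{k+1}) = \zeta_k \zeta_{k+1}^{-1} \delta_t(V_k, V_{k+1})$.

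For the inductive step with $l - k \geq 2$, the key identity is the three-term recurrence
\[
\psi(x[k, l]) = \psi(x[k, l-1]) \cdot \psi(x[l-1, l]) - \psi(x[k, l-2]).
\]
On the $\QH_\sT$ side, applying Lemma~\ref{Lemma-other-rel-A} 2) with the index shift $(k-1, l, l+1) \mapsto (k, l-1, l)$ and substituting $Q^{(l-1)} = (-1)^{\bv_{l-2} - \bv_{l-1}} \zeta_{l-2}^{-1} \zeta_l$ from Thm.~\ref{Thm-emb}, then multiplying through by $\zeta_k \zeta_l^{-1}$, yields exactly this recurrence for the ansatz $\zeta_k \zeta_l^{-1} \delta_t(V_k, V_l)$. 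On the cluster algebra side, this recurrence is precisely the Ptolemy relation for a specific quadrilateral in the $(n+3)$-gon model, under the identification $x[i, j] \leftrightarrow (v_{i+1}, v_{j+2})$ and $x_k \leftrightarrow (v_0, v_{k+1})$, with the frozen variable $x_0$ corresponding to the edge $(v_0, v_1)$ and all other polygon edges equal to $1$. Concretely, the quadrilateral $v_{k+1} v_l v_{l+1} v_{l+2}$ has diagonals $(v_{k+1}, v_{l+1}) \leftrightarrow x[k, l-1]$ and $(v_l, v_{l+2}) \leftrightarrow x[l-1, l]$, with the three trivial sides $(v_l, v_{l+1})$, $(v_{l+1}, v_{l+2})$, $(v_{l+2}, v_{k+1})$ contributing $1$, while $(v_{k+1}, v_l) \leftrightarrow x[k, l-2]$ and $(v_{k+1}, v_{l+2}) \leftrightarrow x[k, l]$. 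Applying the algebra homomorphism $\psi$ to this Ptolemy relation and using the inductive hypothesis on the smaller-gap factors closes the induction.

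The main obstacle will be pinning down the polygon-to-cluster dictionary so that the Ptolemy quadrilateral we need is genuinely supported on trivial polygon edges, with no spurious frozen contribution from $x_0$. This is verified by cross-checking against the three boundary exchange relations $x_1 x'_1 = x_0 + x_2$, $x_k x'_k = x_{k-1} + x_{k+1}$ for $2 \leq k \leq n-1$, and $x_n x'_n = x_{n-1} + 1$, which one reads off from Thm.~\ref{Thm-emb} in the $A_n$ case; these identities force the frozen coefficient onto the edge $(v_0, v_1)$ and the trivial coefficient onto $(v_0, v_{n+2})$, after which the Ptolemy quadrilaterals involved for $l - k \geq 2$ use only the non-frozen edges.
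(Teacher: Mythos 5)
Your proof is correct and follows essentially the same route as the paper's: the base cases come from the definition of $\psi$ together with $\delta_t(V_{k+2},V_{k+1})=0$, and the induction on $l-k$ matches Lemma~\ref{Lemma-other-rel-A}~2) (after substituting $(-1)^{\bv_{l-2}-\bv_{l-1}}Q^{(l-1)}=\zeta_{l-2}^{-1}\zeta_l$ and rescaling by $\zeta_k\zeta_l^{-1}$) against the standard three-term relation $x[k,l-1]\,x[l-1,l]=x[k,l]+x[k,l-2]$ of the type $A_n$ cluster algebra. The only difference is that you verify the polygon/Ptolemy dictionary and the placement of the frozen coefficient explicitly, where the paper simply cites the standard references; this is a harmless (indeed slightly more careful) elaboration of the same argument.
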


\begin{proof}
$\psi(x[-\alpha_k]) = \zeta_k \cdot c_t (V_k)$ and $\psi(x[\alpha_k]) =  \zeta_{k-1} \zeta_{k}^{-1} \cdot \delta_t (V_{k-1}, V_{k})$ simply follows from the definition of $\psi$, and the fact that $\delta_t (V_{k+1}, V_{k}) = 0$. 

For the non-initial cluster variables, since\footnote{The $\zeta$-variables are $\zeta_0, \zeta_1, \cdots, \zeta_n$, and we set $\zeta_{n+1} = 1$.} $(-1)^{\bv_{l-1} - \bv_l  } Q^{(l)} = \zeta_{l-1}^{-1} \zeta_{l+1}$ for $l\geq 1$, Lemma \ref{Lemma-other-rel-A} 2) shows that
$$
 \zeta_{k-1} \zeta_{l}^{-1}  \cdot \delta_t (V_{k-1} , V_{l} )  \cdot  \zeta_{l} \zeta_{l+1}^{-1}  \cdot \delta_t (V_l, V_{l+1} )  =  \zeta_{k-1} \zeta_{l+1}^{-1}  \cdot \delta_t (V_{k-1}, V_{l+1} ) + \zeta_{k-1} \zeta_{l-1}^{-1}  \cdot \delta_t (V_{k-1} , V_{l-1} ), 
$$
for $1\leq k\leq l \leq n$.
The statement then follows from induction on $l-k$, and a comparison with the standard relations among cluster variables of type $A_n$:
$$
x[k-1, l] \cdot x[l, l+1] = x[k-1, l+1] + x[k-1, l-1], 
$$
which can be found in \cite[Exercise ~5.3.10]{FWZ-45} or ~\cite[\S 12.2]{FZ-2} (with a different triangulation, as in ~\cite[~\S 5.3]{FWZ-45}).
\end{proof}


\vspace{4ex}

\section{Quiver flag: comparison with QH} \label{Sec-GW-QH}

In this section, we consider a special type of quiver varieties, and show that our results in the previous sections can be applied to the usual quantum cohomology, defined in terms of genus-zero Gromov--Witten invariants.

\subsection{Quiver flag varieties}

Let $\bfQ$ be a quiver as in \S \ref{Sec-quiver}, with gauge nodes labeled as $\bfI = \{1, \cdots, n\}$. 

\begin{Assumption} \label{Ass-quiver-flag}
We further assume that:
\begin{itemize}
\setlength{\parskip}{1ex}

\item $\bfQ$ is oriented-acyclic.

\item There is only one frozen node $\bfF = \{0\}$, which is the only source of the quiver.

\item The stability condition $\theta$ is chosen such that $\theta_k >0$, for all $k\in \bfI$. 

\item $\bv_k^- - \bv_k^+ \geq 2$, where $\bv_k^- := \sum_{e\in \bfQ_1, \, t(e) = k} \bv_{s(e)}$ and  $\bv_k^+ := \sum_{e\in \bfQ_1, \, s(e) = k} \bv_{t(e)}$, for all $k\in \bfI$.

\end{itemize}
\end{Assumption}

Let $X$ be the associated quiver variety. 
The first three assumptions means that $X$ is a \emph{quiver flag variety} in the sense of ~\cite{Cra}.
In particular, by Lemma ~\ref{Lemma-stab}, for a representative $(B_e, V_i)_{e\in \bfQ_1, i\in \bfQ_0}$ of a point in $X$, all maps 
$$
\sum_{e \in \bfQ_1,  \, t(e) = k  } B_e: \  V_k^- := \bigoplus_{e \in \bfQ_1, \, t(e) = k} V_{s(e)} \to V_k, \qquad 1\leq k\leq n
$$ 
are surjective. 
In particular, we have $\bv_k^-  \geq \bv_k$. 

By ~\cite[Cor. ~3.8]{Cra}, $X$ is Fano if $\bv_k^- > \bv_k^+$. 
Our last assumption is stronger than that, which is an analogue of a ``Fano index $\geq 2$" condition.

Consider the insertion
$$
\sigma^{(k)}_l (\xi) := e_l (\xi^{(k)}_1, \cdots, \xi^{(k)}_{\bv_k} ) \quad    \in H_\sT^* (\pt) \otimes_\bC H^*_\sK(\pt)^\sW, \qquad 1\leq k\leq n, \ 0\leq l\leq \bv_k.
$$
The tautological classes $\sigma^{(k)}_l (\xi) |_X$ generate the cohomology $H^*_\sT (X)$. 

\begin{Proposition} \label{Prop-rigidity-quiver-flag}
Let $X$ be a quiver flag variety satisfying Ass. ~\ref{Ass-quiver-flag}. 
Then
$$
\widehat I^{( \sigma_l^{(k)} (\xi) )} (Q; \bh) = \widehat{\sigma_l^{(k)} (\xi)} (Q) = \sigma_l^{(k)} (\xi) |_X .
$$
\end{Proposition}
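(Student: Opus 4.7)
The plan is to deduce the proposition directly from Proposition~\ref{Prop-rigidity} 2), applied to the insertion $\tau(\xi) = \sigma_l^{(k)}(\xi)$. It then suffices to verify the degree inequality
\[
\deg_d \sigma_l^{(k)}(\xi) \ < \ \langle \det \sN, d \rangle \qquad \text{for every } 0 \neq d \in \Eff(X^\ab).
\]

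First I would compute the right-hand side. Writing $\sN = \bigoplus_e \Hom(V_{s(e)}, V_{t(e)})$ as a $\sK \times \sT$-representation and collecting terms by node, its determinant character is
\[
\det \sN \ =\  \sum_{k' \in \bfQ_0} (\bv_{k'}^- - \bv_{k'}^+) \sum_{j=1}^{\bv_{k'}} \xi_j^{(k')},
\]
with the convention $\xi_j^{(k')} = u_j^{(k')}$ for $k' \in \bfF$. Pairing with $d \in \sX_*(\sK)$ kills the frozen contributions, giving $\langle \det \sN, d\rangle = \sum_{k' \in \bfI} (\bv_{k'}^- - \bv_{k'}^+)\, \bar d^{(k')}$, and the fourth clause of Assumption~\ref{Ass-quiver-flag} yields the lower bound $\langle \det \sN, d\rangle \geq 2 \sum_{k' \in \bfI} \bar d^{(k')}$.

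Next I would bound the left-hand side. The substitution $\xi_j^{(k)} \mapsto \xi_j^{(k)} + d_j^{(k)} \bh$ turns $\sigma_l^{(k)}$ into $e_l(\xi_1^{(k)} + d_1^{(k)} \bh, \ldots, \xi_{\bv_k}^{(k)} + d_{\bv_k}^{(k)} \bh)$, whose $\bh$-degree is at most the number of indices $j$ with $d_j^{(k)} \neq 0$. To promote this to the sharper bound $\bar d^{(k)}$, I would use that, under Assumption~\ref{Ass-quiver-flag}, $X$ is a tower of Grassmannian bundles in the sense of Craw~\cite{Cra}, so $X^\ab$ is the corresponding tower of products of projective bundles, a smooth projective toric variety whose Mori cone equals the non-negative orthant $\bigoplus_{k', j} \bZ_{\geq 0} \cdot e_j^{(k'), \vee}$ inside $\sX_*(\sK)$. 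Consequently $d_j^{(k)} \geq 0$ for every $d \in \Eff(X^\ab)$, and
\[
\deg_d \sigma_l^{(k)}(\xi) \ \leq\ |\{ j : d_j^{(k)} > 0 \}| \ \leq\ \bar d^{(k)}.
\]

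Combining the two estimates, for any nonzero effective $d$ one has $\sum_{k'} \bar d^{(k')} \geq 1$, and therefore
\[
\deg_d \sigma_l^{(k)}(\xi) \ \leq\ \bar d^{(k)} \ \leq\ \sum_{k' \in \bfI} \bar d^{(k')} \ <\ 2 \sum_{k' \in \bfI} \bar d^{(k')} \ \leq\ \langle \det \sN, d\rangle,
\]
and Proposition~\ref{Prop-rigidity} 2) delivers the three-way equality. The main obstacle I anticipate is the toric-geometric claim that $\Eff(X^\ab)$ lies in the non-negative orthant spanned by $\{e_j^{(k'), \vee}\}$; the degree and asymptotic bookkeeping above is mechanical once this positivity is in hand. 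A fallback would be to bypass the global characterization of $\Eff(X^\ab)$ and check the inequality only on an explicit set of generators of the Mori cone, extracted stepwise from the tower-of-projective-bundles description.
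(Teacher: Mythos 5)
Your argument is correct and is essentially the paper's own proof: the paper likewise reduces to Prop.~\ref{Prop-rigidity}~2) by computing $\la \det \sN, e_i^{(k)} \ra = \bv_k^- - \bv_k^+ \geq 2$ and $\deg_{e_i^{(k)}} \sigma_l^{(k')} = \delta_{kk'}$, using that $\Eff(X^\ab)$ is the non-negative orthant spanned by the $e_i^{(k)}$. The only divergence is that the paper obtains this last fact by citing Kalashnikov (Prop.~2.4.8, Cor.~2.4.13 of \cite{Kal}) rather than your sketched tower-of-projective-bundles argument --- which you rightly flag as the one point needing care --- and your explicit passage from the generators to an arbitrary effective $d$ (via $\deg_d \sigma_l^{(k)} \leq \bar d^{(k)} < 2\sum_{k'} \bar d^{(k')} \leq \la \det\sN, d\ra$) spells out a step the paper leaves implicit.
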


\begin{proof}
According to \cite[Prop. ~2.4.8,  Cor. ~2.4.13]{Kal}, $\Eff (X^\ab)$ is generated by $e_i^{(k)} \in \sX_* (\sK)$, for $1\leq k\leq n$, $1\leq i\leq \bv_k$. 
We compute
\ben
\la \det \sN , e_1^{(k)} \ra &=& \Big\la \sum_{e\in \bfQ_1} \sum_{a=1}^{\bv_{s(e)}} \sum_{b =1}^{\bv_{t(e)}} ( \xi^{(t(e))}_b - \xi^{(s(e))}_a ) , e_1^{(k)} \Big\ra \\
&=& \sum_{e\in \bfQ_1, \, t(e) = k} \bv_{s(e)} - \sum_{e\in \bfQ_1, \, s(e) = k} \bv_{t(e)} \\
&=& \bv_k^- - \bv_k^+. 
\een
On the other hand, $\deg_{e_1^{(k)}} \sigma_l^{(k')} (\xi) = \delta_{kk'}$, for all $l$ and $k'$.
By $S_{\bv_k}$-symmetry, the same results hold for all $e_i^{(k)}$'s.
The statement then follows from Prop.  ~\ref{Prop-rigidity}.
\end{proof}

\subsection{Comparison with quantum cohomology}

Let $X$ be a smooth quasi-projective variety, and $\beta \in H_2 (X, \bZ)$ be a curve class.
Let $\sT$ be a torus acting on $X$.
Consider the moduli space ~\cite{Kon, BM}
$$
\overline\cM_{0, n} (X, \beta) := \{ f: (C; p_1, \cdots, p_n) \to X \mid g(C) = 0, \, f_* [C] = \beta \}, 
$$
where $f$ is a stable map from an $n$-pointed nodal prestable curve to $X$.
The moduli space $\overline\cM_{0, n} (X, \beta)$ is a separated Deligne--Mumford stack, with proper evaluation maps $\ev_i: \overline\cM_{0, n} (X, \beta) \to X$. 
By standard construction \cite{BF, Beh}, it is equipped with a perfect obstruction theory and hence admits a virtual fundamental class $[\overline\cM_{0, n} (X, \beta)]_\vir$.

\begin{Definition}
Let $\gamma, \eta \in H^*_\sT (X)$. 
The \emph{small quantum product} of $\gamma$ and $\eta$ is defined as
$$
\gamma *_\GW \eta 
:= \gamma \cdot \eta + \sum_{\beta \in H_2(X, \bZ)_{\Eff}}  Q^\beta \ev_{1*} \big( \ev_2^* \gamma \cdot \ev_3^* \eta \cdot [\overline\cM_{0, n+3} (X, \beta)]_\vir \big) \quad \in H_\sT ^*(X) [\![Q]\!] .
$$
The \emph{small quantum cohomology} $\QH^\GW_\sT (X)$ of $X$ is the ring $H^*_\sT (X) [\![Q]\!]$, equipped with the  small quantum product.
It is a deformation of the usual cohomology ring, whose identity coincides with the usual one $1$. 
\end{Definition}

When $X$ is Fano, the quantum product has always finite terms, and one may define the quantum cohomology over $\bC[Q]$. 
We will denote this version by $\QH_\sT^{\GW, \poly} (X)$.

Now let $X$ be a quiver flag variety satisfying Ass. ~\ref{Ass-quiver-flag}. 
The quantum cohomology ring is computed in \cite{GK}. 

\begin{Proposition}[{\cite[Thm. ~0.2]{GK}}]
$\QH^\GW_\sT (X)$ has the same explicit presentation as in Thm. ~\ref{Thm-QH}, where a symmetric function $\tau(\xi) \in H_\sT^* (\pt) \otimes_\bC H_{ \sK }^*(\pt)^\sW$ stands for the tautological class $\tau(\xi)|_X$.
\end{Proposition}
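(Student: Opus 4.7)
The plan is to transport the presentation of $\QH_\sT(X)$ in Thm. ~\ref{Thm-QH} to $\QH^\GW_\sT(X)$ by showing that, under Ass. ~\ref{Ass-quiver-flag}, the quasimap $I$-function and the Gromov--Witten $J$-function of $X$ coincide. Conceptually, the condition $\bv_k^- - \bv_k^+ \geq 2$ is the ``Fano index at least $2$'' threshold that trivializes the quasimap-to-GW wall-crossing; it is the same inequality that powers the rigidity of Prop. ~\ref{Prop-rigidity-quiver-flag}.

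First I would verify that $I_X^{(1)}(Q;\bh)$ is bounded as $\bh \to \infty$. By the computation in the proof of Prop. ~\ref{Prop-rigidity-quiver-flag}, every primitive generator $e_1^{(k)}$ of $\Eff(X^\ab)$ satisfies $\la \det \sN, e_1^{(k)} \ra = \bv_k^- - \bv_k^+ \geq 2$, and since $\deg_d 1 = 0$ for the trivial insertion, Lemma ~\ref{Lemma-asymp} 1) supplies the desired boundedness at each primitive $d$ and therefore at every nonzero $d \in \Eff(X^\ab)$. By the standard quasimap wall-crossing formula (see ~\cite{CKM} and the cohomological counterpart of the framework in ~\cite{Zhou}), this boundedness is exactly the condition for the mirror map to be trivial, yielding $I_X^{(1)}(Q;\bh) = J_X^{\GW,(1)}(Q;\bh)$.

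With this identification in hand, the quantum differential equations of Prop. ~\ref{Prop-diff-QH-ab} 1), viewed as operator equations annihilating $\widetilde I^{(1)}_{X^\ab}(\widetilde Q;\bh)$, and their Weyl-antisymmetrized, abelianized incarnations used in the proof of Thm. ~\ref{Thm-QH}, are automatically satisfied by the GW $J$-function as well. The GW analogue of Prop. ~\ref{Prop-QH-I} 2) then converts each such differential equation into the corresponding element of the ideal $\cJ$ in $\QH^\GW_\sT(X)$, producing a surjection
$$
H^*_\sT(\pt) \otimes_\bC H^*_\sK(\pt)^\sW [\![Q]\!] / \cJ \twoheadrightarrow \QH^\GW_\sT(X).
$$
The $Q = 0$ Kirwan presentation $H^*_\sT(X) \cong H^*_\sT(\pt) \otimes_\bC H^*_\sK(\pt)^\sW / \cJ|_{Q=0}$ is common to both quantum theories, so the flatness and Nakayama argument at the end of the proof of Thm. ~\ref{Thm-QH} applies verbatim on the GW side and promotes the surjection to an isomorphism.

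The main obstacle is the wall-crossing step in the equivariant, non-proper quiver-flag setting: although the master-space argument is classical when $X$ is projective, its equivariant adaptation requires checking properness of the $\bC^*_\bh$-fixed loci used to define both invariants and verifying compatibility of the virtual classes across the wall. Once this is in place, the rest is an essentially formal transport of the quasimap-side derivation in \S \ref{sec:quasimapqcoh}.
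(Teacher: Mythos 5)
The paper offers no proof of this Proposition at all: it is imported as \cite[Thm.~0.2]{GK} and used as an external input for Cor.~\ref{Cor-isom-QH-GW} and Cor.~\ref{Cor-emb-GW}. Your attempt is therefore necessarily an independent reconstruction, and its overall strategy (trivial mirror map from the index condition, $I=J$ by genus-zero quasimap wall-crossing, then transport of the differential relations and the flatness/Nakayama endgame) is indeed the standard route, close in spirit to what \cite{GK} actually do following \cite{CKS}. Two steps, however, are not right as written. First, boundedness of $I^{(1)}_X(Q;\bh)$ as $\bh\to\infty$ is \emph{not} ``exactly the condition for the mirror map to be trivial'': boundedness only gives $I=I_0+O(\bh^{-1})$ with $I_0$ a priori a nontrivial series in $Q$, whereas triviality of the mirror map requires $I=1+O(\bh^{-2})$. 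What saves you is the full strength of $\la \det\sN, e_1^{(k)}\ra=\bv_k^--\bv_k^+\geq 2$: by the degree count in the proof of Lemma~\ref{Lemma-asymp}, the $Q^{\bar d}$-coefficient of $I^{(1)}_X$ has $\bh$-degree $-\la\det\sN,d\ra\leq -2$ for every nonzero $d\in\Eff(X^\ab)$, so $I^{(1)}_X=1+O(\bh^{-2})$ and the wall-crossing does give $J=I$ with no mirror transformation. Your worry about non-properness is also moot: under Ass.~\ref{Ass-quiver-flag}, $X$ is a quiver flag variety in the sense of \cite{Cra}, hence smooth projective, and the equivariant statement for projective targets is standard.

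The more serious gap is the appeal to ``the GW analogue of Prop.~\ref{Prop-QH-I} 2)'', which does not exist off the shelf. The quasimap proof of that proposition depends on the evaluation map $\ev_0$ landing in the Artin stack $\fX$, so that arbitrary insertions $\tau(\xi)\in H^*_{\sG\times\sT}(\pt)$ make sense, and on the capping operator $\Psi$. In Gromov--Witten theory there is no evaluation to $\fX$, and the fundamental-solution formalism only converts differential operators in $\bh Q\partial_Q$ --- i.e.\ \emph{divisor} insertions --- into quantum-product relations. But the generators of $\cJ$ in Thm.~\ref{Thm-QH} involve arbitrary $g(\xi)$, and the individual Chern roots $\xi^{(k)}_j$ are divisors only on the abelianization $X^\ab$, not on $X$. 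So the step you describe as ``essentially formal transport'' is precisely where the content lies: one must establish the abelian/nonabelian correspondence for the GW $J$-function of $X$ in terms of $J_{X^\ab}$ and then descend the abelian quantum $D$-module relations (with their Weyl antisymmetrization and the sign shift $Q\mapsto Q_\sharp$ of Lemma~\ref{Lemma-shift-Q}) to ring relations in $\QH^\GW_\sT(X)$. That descent is the nontrivial part of \cite{GK}; once it is supplied, your surjection and the flatness/Nakayama argument from the end of the proof of Thm.~\ref{Thm-QH} do transfer verbatim.
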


Combined with Prop. ~\ref{Prop-rigidity-quiver-flag}, we then obtain a comparison result.

\begin{Corollary} \label{Cor-isom-QH-GW}
Let $X$ be a quiver flag variety satisfying Ass. ~\ref{Ass-quiver-flag}. 
There is an isomorphism of $H_\sT^*(\pt) [\![Q]\!]$-algebras 
$$
\QH_\sT (X) \cong \QH_\sT^\GW (X), 
$$
where $\sigma_l^{(k)} (\xi)$ is sent to $\sigma_l^{(k)} (\xi) |_X$. 
Similar holds for $
\QH_\sT^\poly (X) \cong \QH_\sT^{\GW, \poly } (X).
$
\end{Corollary}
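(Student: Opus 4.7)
The plan is to match presentations of the two rings and then use the rigidity result Prop.~\ref{Prop-rigidity-quiver-flag} to identify the tautological generators on the two sides. By Thm.~\ref{Thm-QH}, the quasimap quantum cohomology admits the presentation
$$
\QH_\sT(X) \ \cong \ H_\sT^*(\pt) \otimes_\bC H_\sK^*(\pt)^\sW [\![Q]\!] / \cJ,
$$
under which a class $\tau(\xi)$ in the polynomial ring is realized as the quantum tautological class $\widehat{\tau(\xi)}(Q) \in H^*_\sT(X)[\![Q]\!]$. By the cited \cite[Thm.~0.2]{GK}, the small quantum cohomology $\QH_\sT^\GW(X)$ admits the \emph{same} explicit presentation by the same ideal $\cJ$, except that $\tau(\xi)$ is now realized as the classical tautological class $\tau(\xi)|_X \in H^*_\sT(X)[\![Q]\!]$. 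Composing the GK isomorphism with the inverse of the one from Thm.~\ref{Thm-QH} yields an isomorphism of $H_\sT^*(\pt)[\![Q]\!]$-algebras
$$
\Phi: \QH_\sT^\GW(X) \ \xrightarrow{\ \sim\ } \ \QH_\sT(X), \qquad \sigma_l^{(k)}(\xi)|_X \ \mapsto \ \widehat{\sigma_l^{(k)}(\xi)}(Q).
$$

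The next step is to verify that $\Phi$ acts by the identity on the underlying module $H^*_\sT(X)[\![Q]\!]$. Since the classes $\sigma_l^{(k)}(\xi)|_X$, for $1 \leq k \leq n$ and $0 \leq l \leq \bv_k$, generate $H^*_\sT(X)$ as an $H^*_\sT(\pt)$-algebra (via the $\sT$-equivariant Kirwan surjection), it suffices to check that $\widehat{\sigma_l^{(k)}(\xi)}(Q) = \sigma_l^{(k)}(\xi)|_X$ for every such $(k,l)$. This is precisely the conclusion of Prop.~\ref{Prop-rigidity-quiver-flag}, whose hypothesis is satisfied by Ass.~\ref{Ass-quiver-flag}: for each primitive generator $e_i^{(k)}$ of $\Eff(X^\ab)$ one has $\deg_{e_i^{(k)}} \sigma_l^{(k')}(\xi) = \delta_{kk'} \leq 1 < 2 \leq \bv_k^- - \bv_k^+ = \la \det \sN, e_i^{(k)}\ra$, giving the strict inequality. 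Consequently $\Phi$ is the identity on generators, hence the identity on all of $H^*_\sT(X)[\![Q]\!]$, so the two quantum products coincide as algebra structures on the same underlying module.

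For the polynomial version, one simply restricts $\Phi$ to the polynomial subalgebras. The GK presentation holds over $\bC[Q]$ as well, giving $\QH_\sT^{\GW,\poly}(X) \cong H_\sT^*(\pt) \otimes_\bC H_\sK^*(\pt)^\sW[Q]/\cJ_\poly$; combined with Cor.~\ref{Cor-finiteness}, this identifies both $\QH_\sT^{\GW,\poly}(X)$ and $\QH_\sT^\poly(X)$ with the same $\bC[Q]$-subalgebra of $H^*_\sT(X)[\![Q]\!]$ generated by the tautological classes, and the restriction of $\Phi$ yields the required isomorphism.

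There is no real obstacle in this argument: given that both the GK presentation and our Thm.~\ref{Thm-QH} are already in place, the entire content is the rigidity verification, which in turn reduces to the numerical check already performed in the proof of Prop.~\ref{Prop-rigidity-quiver-flag}. The only point requiring a bit of care is confirming that the strict inequality version of Lemma~\ref{Lemma-asymp} applies, for which the hypothesis $\bv_k^- - \bv_k^+ \geq 2$ in Ass.~\ref{Ass-quiver-flag} was specifically designed.
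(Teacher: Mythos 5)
Your proposal follows essentially the same route as the paper: the corollary is obtained by composing the presentation of Thm.~\ref{Thm-QH} with the Givental--Kim presentation of \cite[Thm.~0.2]{GK}, and then invoking Prop.~\ref{Prop-rigidity-quiver-flag} to identify the generators $\widehat{\sigma_l^{(k)}(\xi)}(Q)$ with $\sigma_l^{(k)}(\xi)|_X$; your numerical verification of the strict inequality $\deg_{e_i^{(k)}}\sigma_l^{(k')}(\xi)\leq 1 < \bv_k^- - \bv_k^+$ is exactly the paper's. One caveat: your intermediate claim that $\Phi$ is therefore \emph{the identity on all of} $H^*_\sT(X)[\![Q]\!]$, so that the two quantum products literally coincide as binary operations, does not follow from ``identity on generators'' alone --- the classes $\sigma_l^{(k)}(\xi)|_X$ generate $H^*_\sT(X)$ as an algebra, not as a module, and a ring isomorphism between two different products that fixes algebra generators is the identity map precisely when the two products already agree, which is what one would be trying to prove. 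This over-claim is harmless here because the corollary as stated only asserts the existence of an algebra isomorphism with the prescribed images of the generators, which your first paragraph (matching the two presentations) already establishes; the polynomial version via Cor.~\ref{Cor-finiteness} is likewise fine.
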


We also obtain a cluster algebra structure on $\QH^{\GW, \poly}_\sT (X) [t] \otimes_{\bC[Q]} \bC[ \zeta^{\pm 1} ]$. 

\begin{Corollary} \label{Cor-emb-GW}
Let $X$ be a quiver flag variety satisfying Ass. ~\ref{Ass-quiver-flag}.
Thm. ~\ref{Thm-emb} holds if we replace $\QH_\sT^\poly (X)$ with $\QH^{\GW, \poly}_\sT (X) $.
\end{Corollary}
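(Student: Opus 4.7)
The plan is to simply transport the embedding $\psi$ of Thm.~\ref{Thm-emb} along the comparison isomorphism already established in Cor.~\ref{Cor-isom-QH-GW}, thereby reducing the statement about $\QH_\sT^{\GW,\poly}(X)$ to the statement about $\QH_\sT^\poly(X)$ that has already been proved.

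First, I would invoke Cor.~\ref{Cor-isom-QH-GW}, which provides an isomorphism $\Phi: \QH_\sT^\poly(X) \xrightarrow{\ \sim\ } \QH_\sT^{\GW,\poly}(X)$ of $H_\sT^*(\pt)[Q]$-algebras sending $\sigma_l^{(k)}(\xi)$ to $\sigma_l^{(k)}(\xi)|_X$. Since the Chern polynomial $c_t(V_k) = \sum_l t^{\bv_k - l} \sigma_l^{(k)}(\xi)$ is a $\bC[t]$-polynomial in the generators $\sigma_l^{(k)}(\xi)$, and the truncated Chern quotient $\delta_t(V_k^\pm, V_k)$ is by Def.~\ref{eqn:chernquot} an $H_\sT^*(\pt)[t^{\pm 1}]$-polynomial in the same generators (together with the equivariant parameters for $\sT$), the $\bC[t]$-linear extension $\Phi[t]$ automatically sends $c_t(V_k)$ and $\delta_t(V_k^\pm, V_k)$ on the quasimap side to the classes defined by the same formulas in $\QH_\sT^{\GW,\poly}(X)[t]$. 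We retain the notation $c_t(V_k)$ and $\delta_t(V_k^\pm, V_k)$ for the images.

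Next, I would base change along the ring homomorphism $\bC[Q] \to \bC[\zeta^{\pm 1}]$ given by $Q^{(k)} \mapsto (-1)^{\bv_k^- - \bv_k}\prod_{i=1}^m \zeta_i^{-b_{ik}}$ used in \S\ref{sec:clusterqcoh}. Since $\Phi[t]$ is an isomorphism of $\bC[Q]$-algebras, tensoring yields an isomorphism
\[
\Phi[t]\otimes \mathrm{id}: \QH_\sT^\poly(X)[t]\otimes_{\bC[Q]}\bC[\zeta^{\pm 1}]\xrightarrow{\ \sim\ }\QH_\sT^{\GW,\poly}(X)[t]\otimes_{\bC[Q]}\bC[\zeta^{\pm 1}]
\]
of $H_\sT^*(\pt)[\zeta^{\pm 1}]$-algebras. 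Defining $\psi_{\GW}:=(\Phi[t]\otimes\mathrm{id})\circ \psi$, where $\psi$ is the embedding of Thm.~\ref{Thm-emb}, automatically produces an algebra homomorphism $\cA \to \QH_\sT^{\GW,\poly}(X)[t]\otimes_{\bC[Q]}\bC[\zeta^{\pm 1}]$ with the prescribed values on the generators $x_i$ and $x'_k$, because the images $\zeta_i\cdot c_t(V_i)$ and $\zeta_k^{-1}\prod\zeta_i^{\pm b_{ik}}\cdot \delta_t(V_k^\pm,V_k)$ are preserved by $\Phi[t]$.

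Finally, injectivity of $\psi_{\GW}$ follows immediately: $\psi$ is injective by Thm.~\ref{Thm-emb}, and composition with an isomorphism preserves injectivity. I do not foresee any serious obstacle here; the only point that deserves care is checking that both the Chern polynomials and the truncated Chern quotients are literally the same elements on the two sides of $\Phi[t]$, which is true because both are defined by identical polynomial expressions in the generators matched by the Gu--Kalashnikov presentation used in Cor.~\ref{Cor-isom-QH-GW}.
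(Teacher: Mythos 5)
Your proposal is correct and is exactly the argument the paper intends: the corollary is stated without proof as an immediate consequence of Cor.~\ref{Cor-isom-QH-GW}, and transporting $\psi$ along the comparison isomorphism (which matches the generators $\sigma_l^{(k)}(\xi)$, hence all Chern polynomials and truncated Chern quotients, and is $\bC[Q]$-linear so survives the base change to $\bC[\zeta^{\pm 1}]$) is precisely the intended reasoning. Your write-up just makes explicit the routine checks the paper leaves implicit.
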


\subsection{Example: type $A_n$}

Let $X$ be a type $A_n$ quiver variety as in \S \ref{Sec-A_n}. 
Now we can say more about the truncated Chern quotients.

\begin{Lemma}
For any $0\leq k \leq l \leq n$, 
$$
\widehat I^{( \delta_t (V_k, V_l) )} (Q; \bh) = \widehat{ \delta_t (V_k, V_l) } (Q) = c_t (\cV_k / \cV_l) .
$$
\end{Lemma}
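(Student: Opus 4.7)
The plan is to apply the rigidity criterion Prop.~\ref{Prop-rigidity} 2) coefficient-wise in $t$. Writing
$$
\delta_t(V_k,V_l)=\sum_{q=0}^{\bv_k-\bv_l} t^q\,\tau^{(q)}(\xi),\qquad \tau^{(q)}(\xi):=(-1)^p\sum_{m=0}^{p}(-1)^m e_m(\xi^{(k)})\,h_{p-m}(\xi^{(l)}),
$$
with $p:=\bv_k-\bv_l-q$, each $\tau^{(q)}$ is a $\sW$-symmetric polynomial of $\xi$-degree $p$ involving only the variables $\xi^{(k)}$ and $\xi^{(l)}$. By $\bC[\![Q]\!][\bh]$-linearity of $I^{(-)}$ and of the quantum tautological map in the insertion, it suffices to verify rigidity for each $\tau^{(q)}$ separately.

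The main step is the degree check $\deg_d\tau^{(q)}<\la\det\sN,d\ra$ for every nonzero $d\in\Eff(X^\ab)$. As recalled in the proof of Prop.~\ref{Prop-rigidity-quiver-flag}, $\Eff(X^\ab)$ is the positive cone generated by $e_j^{(k')}$, $1\leq k'\leq n$, $1\leq j\leq\bv_{k'}$, with $\la\det\sN,e_j^{(k')}\ra=\bv_{k'-1}-\bv_{k'+1}$ (using $\bv_{n+1}=0$). Since $\tau^{(q)}$ involves only $\xi^{(k)}$ and $\xi^{(l)}$, only the $k$- and $l$-components of $d=\sum d_{j,k'}e_j^{(k')}$ contribute to $\deg_d\tau^{(q)}$. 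Exploiting that a shift of $N$ variables raises the $\bh$-degree of $e_m$ by at most $\min(m,N)$, while the same shift of $h_r$ can raise it by up to $r$, we obtain two cases: if $d$ has a nonzero $l$-component then $\deg_d\tau^{(q)}\leq p\leq\bv_k-\bv_l$ and $\la\det\sN,d\ra\geq\bv_{l-1}-\bv_{l+1}$; otherwise $\deg_d\tau^{(q)}\leq|d^{(k)}|_0$ while $\la\det\sN,d\ra\geq 2\sum_{j,k'}d_{j,k'}\geq 2|d^{(k)}|_0$, using $\bv_{k'-1}-\bv_{k'+1}\geq 2$ for $k'\geq 1$ (and $k=0$ being frozen forces $|d^{(k)}|_0=0$). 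The decisive inequality $\bv_k-\bv_l<\bv_{l-1}-\bv_{l+1}$ follows by specializing the dimension hypothesis $\bv_0-\bv_{l-1}<\bv_l-\bv_{l+1}$ (the $k=l-1$ case) to $\bv_0-\bv_l<\bv_{l-1}-\bv_{l+1}$ and combining with $\bv_k\leq\bv_0$; the boundary $l=n$ uses $\bv_{n+1}=0$ and the $k=n-1$ instance analogously.

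With the degree condition in hand, Prop.~\ref{Prop-rigidity} 2) gives $\widehat I^{(\tau^{(q)})}(Q;\bh)=\widehat{\tau^{(q)}}(Q)=\tau^{(q)}|_X$ for each $q$, and summing against $t^q$ yields the first two equalities. For the last, Lemma~\ref{Lemma-stab} combined with $\theta_{k'}>0$ for all $1\leq k'\leq n$ ensures that on $X$ the tautological maps $\cV_{k'-1}\twoheadrightarrow\cV_{k'}$ are surjective, so composing produces a surjection $\cV_k\twoheadrightarrow\cV_l$ with kernel $\cK_{k,l}$ of rank $\bv_k-\bv_l$. Hence $c_t(\cV_k)=c_t(\cK_{k,l})\cdot c_t(\cV_l)$, and $\delta_t(V_k,V_l)|_X=[c_t(\cV_k)/c_t(\cV_l)]_+=c_t(\cK_{k,l})$, which we denote $c_t(\cV_k/\cV_l)$ by abuse of notation.

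The main difficulty is the asymmetric degree behavior of elementary versus complete symmetric functions under variable shifts; this is what forces the strong $A_n$ dimension hypothesis $\bv_0-\bv_k<\bv_{k+1}-\bv_{k+2}$ to play its role precisely in the $l$-direction, where the gap $\bv_{l-1}-\bv_{l+1}$ must strictly exceed $\bv_k-\bv_l$.
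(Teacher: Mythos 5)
Your proposal is correct and takes essentially the same route as the paper: both reduce the statement to the strict degree bound $\deg_d \delta_t (V_k, V_l) < \la \det \sN, d \ra$ on the generators of $\Eff(X^\ab)$, using $\la \det\sN, e_j^{(k)}\ra \geq 2$ in the $k$-direction and the dimension hypothesis $\bv_0 - \bv_{l-1} < \bv_l - \bv_{l+1}$ (hence $\bv_k - \bv_l < \bv_{l-1} - \bv_{l+1}$) in the $l$-direction, and then invoke Prop.~\ref{Prop-rigidity} 2). You simply supply more detail than the paper does, notably the subadditivity argument extending the generator check to all nonzero effective classes and the explicit identification $\delta_t(V_k,V_l)|_X = c_t(\cV_k/\cV_l)$ via the surjection $\cV_k \twoheadrightarrow \cV_l$.
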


\begin{proof}
By definition, we see that
$$
\deg_{e_1^{(k)}} \delta_t (V_k, V_l) = 1, \qquad \deg_{e_1^{(l)}} \delta_t (V_k, V_l) = \bv_k - \bv_l, 
$$
and $\deg_{e_1^{(k')}} \delta_t (V_k, V_l) = 0$, for $k' \neq k, l$. 
As in Prop. ~\ref{Prop-rigidity-quiver-flag}, $\la \det\sN, e_1^{(k')} \ra = \bv_{k'-1} - \bv_{k'+1}$, which is always $\geq 2$. 
Moreover, by assumption on $\bfQ$, we have $\bv_0 - \bv_{l-1} < \bv_l - \bv_{l+1}$. 
Hence
$$
 \bv_k - \bv_l < \bv_k + \bv_{l-1} - \bv_{l+1} - \bv_0 < \bv_{l-1} - \bv_{l+1}, 
$$
which implies the result.
\end{proof}

In particular, we recover the cluster algebra conjecture for type $A_n$. 

\begin{Corollary}[{\cite[Thm. ~1.3]{HZ}}]
For a quiver of type $A_n$ satisfying the assumptions in \S \ref{Sec-A_n}, there is an injective map $\psi$ from the cluster algebra $\cA$ to $\QH^{\GW, \poly}_\sT (X) [t] \otimes_{\bC[Q]} \bC[ \zeta^{\pm 1} ]$, such that the images of cluster variable are
$$
\psi(x[-\alpha_k]) = \zeta_k \cdot c_t (\cV_k), \qquad \psi ( x[kl] ) = \zeta_k \zeta_l^{-1} \cdot c_t (\cV_k/  \cV_l), \qquad 0\leq k \leq l \leq n. 
$$
\end{Corollary}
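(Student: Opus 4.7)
The plan is to assemble three results already established. First, Corollary~\ref{Cor-emb-GW} gives an injective algebra map $\psi: \cA \hookrightarrow \QH^{GW,\poly}_\sT(X)[t] \otimes_{\bC[Q]} \bC[\zeta^{\pm 1}]$ with $\psi(x_i) = \zeta_i \cdot c_t(V_i)$ on the initial cluster variables. To invoke it, one first checks that the type $A_n$ setting of \S\ref{Sec-A_n} satisfies Assumption~\ref{Ass-quiver-flag}: the unique frozen node is the only source, $\bfQ$ is oriented-acyclic, $\theta$ is positive, and $\bv_k^- - \bv_k^+ = \bv_{k-1} - \bv_{k+1} \geq 2$ follows from the strict decrease of $\bv$. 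Under Corollary~\ref{Cor-isom-QH-GW}, the tautological element $c_t(V_k)$ corresponds to the classical $c_t(\cV_k)$, which handles the first formula $\psi(x[-\alpha_k]) = \zeta_k \cdot c_t(\cV_k)$.

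For the non-initial cluster variables, Proposition~\ref{Prop-cl-var-A}, which indexes the cluster variables by positive roots $\alpha_{kl}$, already identifies
\[
\psi(x[kl]) = \zeta_k \zeta_l^{-1} \cdot \delta_t(V_k, V_l)
\]
in $\QH_\sT^\poly(X)[t] \otimes_{\bC[Q]} \bC[\zeta^{\pm 1}]$. The lemma immediately preceding the target corollary then collapses $\delta_t(V_k, V_l)$ to the classical $c_t(\cV_k/\cV_l)$, meaning that as a quantum tautological class it carries no quantum corrections. Transporting this identity through Corollary~\ref{Cor-isom-QH-GW} yields $\psi(x[kl]) = \zeta_k \zeta_l^{-1} \cdot c_t(\cV_k/\cV_l)$ on the Gromov--Witten side, completing the proof.

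The only substantive step is the rigidity statement invoked in the preceding lemma, which feeds on Proposition~\ref{Prop-rigidity}. One must verify $\deg_d \delta_t(V_k, V_l) < \la \det \sN, d \ra$ for every generator $d = e_i^{(k')}$ of $\Eff(X^\ab)$. By $S_{\bv_{k'}}$-symmetry, this reduces to $d = e_1^{(k')}$; the estimate $\la \det \sN, e_1^{(k')} \ra = \bv_{k'-1} - \bv_{k'+1} \geq 2$ is automatic for $k' \neq k,l$, while the sharp case is the inequality $\bv_k - \bv_l < \bv_{l-1} - \bv_{l+1}$, which is exactly where the assumption $\bv_0 - \bv_{l-1} < \bv_l - \bv_{l+1}$ from \S\ref{Sec-A_n} is consumed. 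This arithmetic check is the main obstacle, but it is routine once the hypotheses on $\bv$ are unpacked; after it, the entire statement follows by combining the three ingredients above with no further computation.
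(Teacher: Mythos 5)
Your proposal is correct and follows essentially the same route as the paper, which leaves this corollary as an immediate combination of Cor.~\ref{Cor-emb-GW}, Prop.~\ref{Prop-cl-var-A}, the rigidity lemma immediately preceding the statement, and the identification $\QH_\sT(X)\cong\QH_\sT^{\GW}(X)$ of Cor.~\ref{Cor-isom-QH-GW}; your verification of Assumption~\ref{Ass-quiver-flag} and of the degree inequality $\bv_k-\bv_l<\bv_{l-1}-\bv_{l+1}$ reproduces exactly the arithmetic in the paper's lemma. The only microscopic imprecision is that the bound $\la\det\sN,e_1^{(k')}\ra\geq 2$ is actually consumed at $k'=k$ (where $\deg_{e_1^{(k)}}\delta_t(V_k,V_l)=1$) rather than at $k'\neq k,l$ (where the degree is $0$), but this does not affect the argument.
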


\vspace{4ex}

\bibliographystyle{abbrv}
\bibliography{reference}

\end{document}